\renewcommand{\a}{\alpha}
\renewcommand{\b}{\beta}
\newcommand{\g}{\gamma}
\newcommand{\G}{\Gamma}
\renewcommand{\d}{\delta}
\newcommand{\D}{\Delta}
\renewcommand{\l}{\lambda}
\newcommand{\m}{\mu}
\newcommand{\n}{\nu}
\renewcommand{\o}{\omega}
\renewcommand{\O}{\Omega}
\renewcommand{\r}{\rho}
\newcommand{\s}{\sigma}
\renewcommand{\t}{\tau}
\newcommand{\e}{\varepsilon}
\newcommand{\x}{\xi}
\newcommand{\y}{\eta}
\newcommand{\Oc}{{\mathcal O}}
\newcommand{\C}{{\mathbb C}}
\newcommand{\N}{{\mathbb N}}
\newcommand{\R}{{\mathbb R}}
\newcommand{\Z}{{\mathbb Z}}
\renewcommand{\Pr}{{\mathbb P}}
\renewcommand{\H}{{\mathbb H}}
\newcommand{\Hc}{{\mathcal H}}
\newcommand{\Fc}{{\mathcal F}}
\newcommand{\Sc}{{\mathcal S}}
\newcommand{\gr}{{\rm gr}}
\newcommand{\sgr}{{\rm sgr}}
\newtheorem{theorem}{Theorem}[section]
\newtheorem{proposition}[theorem]{Proposition}
\newtheorem{lemma}[theorem]{Lemma}
\newtheorem{corollary}[theorem]{Corollary}
\theoremstyle{definition}
\newtheorem{definition}[theorem]{Definition}
\theoremstyle{remark}
\newtheorem{remark}[theorem]{Remark}
\newtheorem{question}[theorem]{Question}
\begin{document}

\title[Dimension formula]
      {Dimension of harmonic measures \\in hyperbolic spaces}
\author{Ryokichi Tanaka}
\address{Mathematical Institute, Tohoku University, 6-3 Aza-Aoba, Aramaki, Aoba-ku, Sendai 980-8578 JAPAN}
%\address{Department of Mathematics, University of Washington
%C-138 Padelford Hall Box 354350 Seattle, WA 98195-4350}
\email{rtanaka@m.tohoku.ac.jp}
\date{\today}

\maketitle

\begin{abstract}
We show exact dimensionality of harmonic measures associated with random walks on groups acting on a hyperbolic space under finite first moment condition, and establish the dimension formula by the entropy over the drift.
We also treat the case when a group acts on a non-proper hyperbolic space acylindrically.
Applications of this formula include continuity of the Hausdorff dimension with respect to driving measures and Brownian motions on regular coverings of a finite volume Riemannian manifold.
\end{abstract}

\section{Introduction}

Let $(X, d)$ be a metric space, and $\n$ be a Borel measure on $X$.
The {\it Hausdorff dimension} of the measure $\n$ is the smallest Hausdorff dimension of sets of full measure $\n$.
It is of interest especially when the measure has a dynamical origin.
In this paper, we are concerned with the {\it harmonic measure} associated with a random walk on a group acting on a Gromov hyperbolic space and its Hausdorff dimension.

Let us consider a geodesic hyperbolic space $X$ which we assume {\it proper}, i.e., every closed ball is compact,
--- we deal with a {\it non-proper} space later ---, 
and a countable group $\G$ of isometries of $X$.
Standard examples include the real hyperbolic space $\H^n$ with a discrete subgroup $\G$ of isometries of $\H^n$ and a Cayley graph of a word hyperbolic group with action of the group itself.
For a probability measure $\m$ on the group $\G$, we consider an independent sequence of random group elements $x_1, x_2, \dots$, with common distribution $\m$, and the product 
$$w_n:=x_1 \cdots x_n,$$ 
where $w_0:=id$.
Fix a base point $o$ in the space $X$, then we call $\{w_n o\}_{n=0}^\infty$ a {\it random trajectory} on $X$ starting from $o$.
Associated with the hyperbolic space $X$, one can define the boundary $\partial X$ (the Gromov boundary) as a set of equivalence classes of divergent sequences.
The boundary $\partial X$ admits a canonical quasi-metric $\r$, which is bi-H\"olderian to some metric. 
It is compact when $X$ is proper. 
If the random trajectory $\{w_n o\}_{n=0}^\infty$ converges to a (random) point in the boundary $\partial X$ almost surely, then we call the distribution of limiting points the {\it harmonic measure} (or the {\it stationary measure}), and denote it by $\n_\m$, which depends on $\m$ the step distribution of the random walk.
The boundary $\partial X$ endowed with the harmonic measure $\n_\m$ is of special interest in the context of representation of (bounded) harmonic functions on the group $\G$ --- the {\it Poisson boundary} of $(\G, \m)$.
We discuss a finer property of the space $(\partial X, \n_\m)$. 
The main objective is to establish a formula of Hausdorff dimension in terms of random walk.

\subsection{Main results}
We formulate our main results.
First, let us denote by $\gr(\m)$ the group generated by the support of $\m$, and by $\sgr(\m)$ the semigroup generated by the support of $\m$.
They coincide for example when the measure $\m$ is {\it symmetric}, i.e., $\m(g)=\m(g^{-1})$, but they do not coincide in general.
We call a group $\G$ {\it non-elementary} if its orbit of a (or, equivalently every) point has infinitely many limit points in the boundary $\partial X$.
If the group $\gr(\m)$ is non-elementary, then the harmonic measure $\n_\m$ is well-defined, i.e., the random trajectory $\{w_n o\}_{n=0}^\infty$ converges to a point in the boundary $\partial X$ almost surely; moreover, $\n_\m$ is non-atomic and is the unique {\it $\m$-stationary measure}, i.e.\ a unique probability measure to satisfy
$$
\n_\m=\sum_{g \in \G}\m(g)g\n_\m,
$$
where $g\n_\m$ denotes the pushforward of $\n_\m$ by the group action $g$ on the boundary \cite{K00}.
We often impose a stronger assumption that the semigroup $\sgr(\m)$ is a non-elementary group.
In what follows we make a clear distinction between $\gr(\m)$ and $\sgr(\m)$.

Next, for a probability measure $\m$ on $\G$, 
we say that a probability measure $\m$ has {\it finite first moment} when
$$
L(\m):=\int_\G d(o, g o)d\m(g) < \infty.
$$
Note that the condition does not depend on the choice of base point $o$ by the triangular inequality.
Let $H(\m):=-\sum_{g \in \G}\m(g)\log \m(g)$ (the Shannon entropy of $\m$).
We define the {\it entropy} $h_\m$ and the {\it drift} $l_\m$ by
$$
h_\m:=\lim_{n \to \infty}\frac{1}{n}H(\m^{\ast n}), \ \ \ l_\m:=\lim_{n \to \infty}\frac{1}{n}L(\m^{\ast n}),
$$
where $\m^{\ast n}$ is the $n$-th convolution of $\m$,
and each limit exists by the subadditivity and is finite when $H(\m)<\infty$, and when $L(\m)<\infty$, respectively.
It is known that if the group $\gr(\m)$ is non-elementary, then the entropy $h_\m$ and the drift $l_\m$ are positive \cite{K00}.
We say that $\G$ has {\it exponential growth relative to} $X$ 
if there exists a constant $C$ such that for all $r \ge 0$, we have
\begin{equation}\label{exponential-growth}
\sharp\{g \in \G \ : \ d(o, g o) \le r\} \le Ce^{Cr}.
\end{equation}
Again the condition does not depend on the choice of base point $o$.
This holds for instance the case when the group $\G$ is a discrete subgroup of isometries of the real hyperbolic space $\H^n$ and the case of a Cayley graph of finitely generated group.

\begin{theorem}\label{exact}
Let $(X, d)$ be a hyperbolic proper geodesic metric space, and $\G$ a countable group of isometries of $X$ having exponential growth relative to $X$.
If the semigroup $\sgr(\m)$ generated by the support of $\m$ is a non-elementary subgroup and $\m$ has finite first moment,
then the harmonic measure $\n_\m$ on $\partial X$ is exact dimensional, i.e.,
$$
\lim_{r \to 0}\frac{\log \n_\m\left(B(\x, r)\right)}{\log r}=\frac{h_\m}{l_\m}
$$
for $\n_\m$-almost every $\x$ in $\partial X$.
In particular, we have
$$
\dim \n_\m=\frac{h_\m}{l_\m},
$$
and consequently, $\dim \n_\m>0$.
Here we denote by $B(\x, r)$ the ball of radius $r$ centred at $\x$ with the quasi-metric in the boundary $\partial X$, and by $\dim \n_\m$ the Hausdorff dimension of $\n_\m$.
\end{theorem}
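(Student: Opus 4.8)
The plan is to compute the local dimension $\lim_{r \to 0}\log \n_\m(B(\x, r))/\log r$ by following the random trajectory $\{w_n o\}$ that defines $\x$, matching the two exponential rates that govern, respectively, how fast $w_n o$ escapes towards $\x$ and how fast the harmonic mass of the boundary region it subtends decays. I would normalise the quasi-metric so that $\r(\x, \y) \asymp e^{-(\x \mid \y)_o}$, where $(\cdot \mid \cdot)_o$ denotes the Gromov product based at $o$ extended to $\partial X$; then a ball $B(\x, e^{-R})$ is comparable to the shadow $\Oc_o(w_n o, C)$ cast from $o$ by a bounded ball around the trajectory point $w_n o$ sitting at Gromov-product depth $\approx R$ along the ray to $\x$. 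First I would record the two laws of large numbers that make this picture quantitative: by Kingman's subadditive ergodic theorem and finite first moment, $d(o, w_n o)/n \to l_\m$ almost surely; and by the sublinear geodesic-tracking property of non-elementary hyperbolic random walks, $(w_n o \mid \x)_o / n \to l_\m$ as well, so that the shadow at $w_n o$ has depth $R \approx n\, l_\m$ and contains $\x$ for all large $n$.

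The heart of the argument is to show that the harmonic measure of that shadow decays at the entropy rate,
$$
-\frac{1}{n}\log \n_\m\bigl(\Oc_o(w_n o, C)\bigr) \longrightarrow h_\m \quad \text{almost surely.}
$$
I would obtain this in two moves. The geometric move is a shadow lemma identifying $\n_\m(\Oc_o(x, C))$, up to multiplicative constants, with the hitting quantity $F(o, x) = \Pr[\text{the walk from } o \text{ ever visits } x]$, equivalently with $e^{-d_G(o,x)}$ for the Green quasi-metric $d_G(o,x) := -\log F(o,x)$; here non-elementarity and positivity of the drift are used to produce the matching lower bound, the delicate half. The probabilistic move is the Shannon--McMillan--Breiman theorem for the random walk of Kaimanovich--Vershik and Derriennic, which under finite entropy yields $-\tfrac{1}{n}\log F(o, w_n o) = d_G(o, w_n o)/n \to h_\m$ almost surely, i.e.\ the Green speed equals the asymptotic entropy; finiteness of $h_\m$ is guaranteed by the exponential-growth hypothesis through the fundamental inequality $h_\m \le C\, l_\m$. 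Combining the two moves gives the displayed rate.

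Assembling, along the subsequence $r_n = e^{-(w_n o \mid \x)_o}$ one gets $\log \n_\m(B(\x, r_n))/\log r_n \to h_\m/l_\m$ by dividing the two rates. The remaining, and I expect most technical, step is to upgrade this to the genuine limit over all $r \to 0$: one must control the fluctuation of $\n_\m(B(\x, r))$ between consecutive trajectory scales and rule out that intermediate radii produce a larger $\limsup$ or a smaller $\liminf$. I would sandwich an arbitrary $r$ between two shadow scales coming from consecutive points $w_n o$ and $w_{n+1}o$, using that the increments $d(o, w_{n+1}o) - d(o, w_n o)$ and the corresponding increments of $d_G$ are almost surely sublinear (a consequence of finite first moment via the Borel--Cantelli lemma applied to the step distribution), so both the depth $R$ and the log-mass vary by $o(n)$ across a scale. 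The main obstacle throughout is the lower bound: proving that $\n_\m$ does not concentrate too sharply, namely the matching lower estimate in the shadow lemma together with the almost-sure bound $\liminf_n -\tfrac{1}{n}\log \n_\m(\Oc_o(w_n o, C)) \ge h_\m$, which is precisely what forces exact dimensionality rather than the mere inequality $\dim \n_\m \le h_\m/l_\m$.
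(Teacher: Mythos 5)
Your overall architecture (two exponential rates, shadows comparable to balls, Shannon--McMillan--Breiman, sublinear tracking) is reasonable, but the load-bearing step is exactly the one that fails under a bare finite first moment hypothesis: the two-sided ``shadow lemma'' $\n_\m(S(x,C)) \asymp F(o,x) = e^{-d_G(o,x)}$. That identification is the mechanism of Blach\`ere--Ha\"issinsky--Mathieu, and it rests on Ancona's inequality (almost-multiplicativity of the Green function along geodesics), which is known only for finitely supported, or superexponentially decaying, symmetric measures; the paper states explicitly that no such estimate is available in the present generality. Concretely, both halves break. For the upper bound $\n_\m(S(x,C)) \le C' F(o, B(x,C''))$ you need the trajectory to visit a bounded neighbourhood of $x$ whenever its limit point lies in the shadow, which fails when the step distribution has unbounded support (the walk can jump over the ball). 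For the matching lower bound you need a uniform Harnack-type estimate $\inf_{y \in B(x,C)} \Pr_y[\text{limit point} \in S(x,C)] \ge c > 0$, which is again an Ancona-type input. Your probabilistic move (Green speed equals entropy) is a genuine theorem under finite entropy, but without the geometric move it says nothing about $\n_\m(B(\x,r))$. A secondary symptom that something is missing: your argument never uses the hypothesis that $\sgr(\m)$ is a group, yet the paper needs it and cannot dispense with it without extra geometric assumptions.

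The paper's route avoids the Green function entirely. It applies the Shannon theorem to $\m^{\ast n}(w_n)$ itself and bounds $\Pr\left(A_\e \cap \{w_n o \in B(x_n, 2\e n + C_0)\}\right) \le Ce^{C(3\e n + C_0)}e^{-n(h-\e)}$ by counting, via the exponential-growth hypothesis, the group elements whose orbit point can lie in that ball; no hitting probabilities appear. To convert this trajectory estimate into a boundary estimate it disintegrates $\Pr$ over the limit-point map (Rohlin's conditional measures), defines $F_\e := \{\x \in \partial X : \Pr_\x(A_\e) \ge \e\}$ with $\n_\m(F_\e) \ge 1-2\e$, and deduces $\liminf_{r \to 0} \log \n_\m(F_\e \cap B(\x,r))/\log r \ge h_\m/l_\m - \e$ for almost every $\x$. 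Finally --- and this is where the group hypothesis enters --- it uses stationarity, the quasi-invariance $c_{g,\m}^{-1}\n_\m \le g^{-1}\n_\m \le c_{g,\m}\n_\m$, and ergodicity of the $\G$-action to upgrade the restricted lower bound to one for $\n_\m(B(\x,r))$ itself. If you want to salvage your outline, the shadow/Green-function lemma must be replaced by this counting-plus-disintegration device; the rest of your scheme (sandwiching radii between consecutive trajectory scales, the upper bound via Le Prince) then goes through.
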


When we have a bounded geometry-type assumption on $X$, we can strengthen the result to semigroup random walks.
A metric space $(X, d)$ has {\it bounded growth at some scale} when there exist constants $r, R$ $(0 < r < R)$ and $N$ such that every open ball of radius $R$ is covered by $N$ open balls of radius $r$ \cite{BS}.
For example, a class of hyperbolic spaces of bounded growth at some scale includes every complete simply connected Riemannian manifold with uniformly bounded negative sectional curvature, and every Cayley graph of word hyperbolic group.

\begin{theorem}\label{exact-semigroup}
Let $(X, d)$ be a hyperbolic proper geodesic metric space of bounded growth at some scale, and $\G$ be of exponential growth relative to $X$.
If the group $\gr(\m)$ generated by the support of $\m$ is a non-elementary subgroup and $\m$ has finite first moment,
then the harmonic measure $\n_\m$ on $\partial X$ is exact dimensional and
$$
\dim \n_\m=\frac{h_\m}{l_\m}.
$$
In particular, $\dim \n_\m>0$.
\end{theorem}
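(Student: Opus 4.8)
The plan is to establish the pointwise identity
$$
\lim_{r \to 0}\frac{\log \n_\m\left(B(\x, r)\right)}{\log r}=\frac{h_\m}{l_\m}
\qquad \text{for } \n_\m\text{-almost every } \x \in \partial X,
$$
which contains both exact dimensionality and the formula $\dim \n_\m = h_\m/l_\m$. The two asymptotic inputs I would use along a $\n_\m$-typical ray $w_n \to \x$ are: (i) the linear drift $\tfrac1n d(o, w_n o)\to l_\m$, which holds almost surely by Kingman's subadditive ergodic theorem once $L(\m)<\infty$; and (ii) the shadow estimate $-\tfrac1n\log\n_\m\bigl(\Oc_R(w_n o)\bigr)\to h_\m$ almost surely, where $\Oc_R(w_n o)$ denotes the shadow of $w_n o$ seen from $o$, i.e. the set of $\y\in\partial X$ such that some geodesic ray from $o$ to $\y$ passes within distance $R$ of $w_n o$. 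To make $h_\m$ finite and positive, I first record that exponential growth relative to $X$ together with $L(\m)<\infty$ forces $H(\m)<\infty$. The essential point is that (i) and (ii) require only that $\gr(\m)$ be non-elementary with finite first moment, and are insensitive to whether $\sgr(\m)$ is a group.

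Comparing the shadow $\Oc_R(w_n o)$, whose quasi-metric diameter is $\asymp e^{-d(o, w_n o)}$, to a ball of radius $r_n \asymp e^{-d(o, w_n o)}$ and combining (i) with (ii) gives, along the sparse sequence of radii $r = r_n$,
$$
\frac{\log \n_\m\bigl(B(\x, r_n)\bigr)}{\log r_n}\longrightarrow \frac{h_\m}{l_\m}.
$$
The content of the theorem is to promote this to all $r\to 0$. The inequality $\limsup_{r}\tfrac{\log\n_\m(B(\x,r))}{\log r}\le h_\m/l_\m$ follows readily, since every ball $B(\x,r)$ contains the shadow $\Oc_R(w_n o)$ once $r\gtrsim r_n$, giving a lower bound on $\n_\m(B(\x,r))$. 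The reverse inequality requires an upper bound on $\n_\m(B(\x,r))$ for arbitrary balls --- not merely those arising as shadows of the walk --- and this is precisely where the proof of Theorem \ref{exact} used non-elementarity of $\sgr(\m)$ to produce enough shadows to sandwich every ball. Here $\sgr(\m)$ may be a proper semigroup and $\n_\m$ may be carried by a thin subset of $\partial X$, so I would instead invoke the hypothesis of bounded growth at some scale: it makes $(\partial X,\r)$ a doubling quasi-metric space, which furnishes a uniform bound on the number of disjoint shadows of a given size meeting a fixed ball. Combined with a deterministic shadow lemma bounding $\n_\m(\Oc_R(g o))$ uniformly in $g$ and with the exponential growth of $\G$, this lets one cover $B(\x,r)$ by a controlled number of shadows of comparable size and estimate $\n_\m(B(\x,r))$ by the sum of their measures.

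The interpolation across scales is then routine. For $\n_\m$-a.e. $\x$ and each small $r$ one sandwiches $r$ between consecutive shadow radii $r_{n+1}\le r\le r_n$; the finite first moment yields $\tfrac1n \max_{1\le k\le n} d(o, x_k o)\to 0$ almost surely, so $\log r_{n+1}/\log r_n\to 1$, and the two endpoint exponents, both converging to $h_\m/l_\m$, squeeze the limit. I expect the main obstacle to be the uniform upper estimate $\n_\m(B(\x,r))\lesssim \sum \n_\m(\text{shadows})$ when $\sgr(\m)$ is only a semigroup: one cannot rely on the self-similar abundance of shadows available in Theorem \ref{exact}, and must instead convert bounded growth of $X$ into a genuine Besicovitch-type covering on the boundary whose multiplicity is bounded independently of the scale and of $\x$, and verify that the shadow--ball comparison survives on the possibly thin support of $\n_\m$.
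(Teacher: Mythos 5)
Your proposal founders on the step you present as an input, namely the almost sure asymptotic $-\tfrac1n\log\n_\m\bigl(\Oc_R(w_no)\bigr)\to h_\m$. Only one half of this is accessible: $\n_\m\bigl(\Oc_R(w_no)\bigr)\ge \m^{\ast n}(w_n)\cdot c \ge ce^{-n(h_\m+\e)}$, which is Le Prince's argument and yields the \emph{upper} bound on the local dimension (Theorem \ref{upperbound}). The other half, $\n_\m\bigl(\Oc_R(w_no)\bigr)\le e^{-n(h_\m-\e)}$, is the entire difficulty of the theorem: the shadow can a priori capture the limit points of many trajectories that do not track a geodesic or do not satisfy the Shannon asymptotics at time $n$, and under a bare finite first moment hypothesis there is no way to rule out that these contribute a much larger measure. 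The paper does not prove this unconditional bound. It introduces the good event $A_\e$ (tracking plus Shannon within error $\e n$), disintegrates $\Pr$ over $\partial X$ via Rohlin's conditional measures, sets $F_\e=\{\x : \Pr_\x(A_\e)\ge\e\}$, and proves the shadow decay only for the restricted measure $\n_\m|_{F_\e}$ (Theorem \ref{F}), with an extra harmless factor $e^{C\e n}$ coming from counting orbit points in a ball of radius $2\e n$ via exponential growth. Your fallback, a ``deterministic shadow lemma bounding $\n_\m(\Oc_R(go))$ uniformly in $g$,'' likewise does not exist at this level of generality; such uniform shadow estimates are exactly what Ancona-type inequalities provide under far stronger hypotheses (symmetric, finitely supported $\m$, as in Blach\`ere--Ha\"issinsky--Mathieu), and they are unavailable here. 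A further caution: a doubling quasi-metric space need not satisfy a Besicovitch covering property, so ``bounded growth at some scale $\Rightarrow$ doubling $\Rightarrow$ bounded-multiplicity coverings by shadows'' is not a valid chain.

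You have, however, correctly located where the hypothesis of bounded growth at some scale must enter, even if not how. In the paper it is used not to cover balls by shadows but to upgrade the restricted estimate of Theorem \ref{F} to the full measure: by Bonk--Schramm the boundary has finite Assouad dimension, hence $(\partial X,\r^\a)$ embeds bi-Lipschitzly into some $\R^n$, and the Borel density theorem in $\R^n$ (which does rest on Besicovitch, but in $\R^n$ where it holds for arbitrary Borel measures) gives $\n_\m\bigl(F_\e\cap B(\x,L^{2/\a}r)\bigr)\ge c(\x)\,\n_\m\bigl(B(\x,r)\bigr)$ at $\n_\m$-a.e.\ point of $F_\e$. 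This converts the lower bound for $\n_\m|_{F_\e}$ into one for $\n_\m$ itself, and is precisely the device that replaces the ergodicity-plus-stationarity argument of Theorem \ref{exact}, which requires $\sgr(\m)$ to be a group. To repair your proof you would need to (a) abandon the unconditional shadow asymptotic in favour of the conditional-measure construction of $F_\e$, and (b) replace the Besicovitch covering scheme by the density-point argument through the Assouad embedding.
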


In fact, our proof is general enough to extend to groups acting on non-proper hyperbolic spaces.
But in the non-proper setting, many groups of interest do not satisfy the condition of exponential growth (\ref{exponential-growth}).
Let $X$ be a separable geodesic (but not necessarily proper) hyperbolic space.
In this case, note that the corresponding boundary $\partial X$ is a separable complete metric space; yet not necessarily compact.
In \cite{MT}, Maher and Tiozzo showed 
that the semigroup ${\rm sgr}(\m)$ is a non-elementary subgroup of $\G$,
then the harmonic measure is well-defined; namely,
for the random walk $\{w_n\}_{n=0}^\infty$ with the step distribution $\m$, 
almost every trajectory $\{w_n o\}_{n=0}^\infty$ converges to a point in the boundary $\partial X$,
and also the distribution $\n_\m$ of the limiting point is non-atomic and the unique $\m$-stationary measure on $\partial X$.
They proved moreover, if the group $\G$ acts on $X$ {\it acylindrically}, and $\m$ has finite entropy and finite logarithmic moment, then the boundary $\partial X$ endowed with the harmonic measure $\n_\m$ is in fact the Poisson boundary of $(\G, \m)$ \cite[Theorem 1.5]{MT}.
Recall that a group $\G$ acts on $X$ {\it acylindrically} if the group acts on $X$ by isometries and satisfies the following:
for every $K \ge 0$, there exist constants $R, N$ such that for all two points $x$ and $y$ in $X$ with $d(x, y) \ge R$ 
there are at most $N$ group elements $g$ in $\G$ satisfying that $d(x, g x) \le K$ and $d(y, g y) \le K$.
If the group admits a proper action on a hyperbolic space, then the action is acylindrical by definition.
Also notice that if the group acts on a proper hyperbolic space acylindrically, then the action is in fact proper.
For example, the mapping class group of an orientable surface acts on the curve complex acylindrically; see \cite{O} and references therein for more examples and their basic properties.

\begin{theorem}\label{acyl}
Let $\G$ be a countable group of isometries of a separable geodesic hyperbolic space $X$. 
If $\G$ acts on $X$ acylindrically, and the semigroup ${\rm sgr(\m)}$ generated by the support of $\m$ is a non-elementary subgroup of $\G$ and $\m$ has finite entropy and finite first moment,
then the harmonic measure $\n_\m$ is exact dimensional and
$$
\dim \n_\m=\frac{h_\m}{l_\m}.
$$
In particular, $\dim \n_\m>0$.
\end{theorem}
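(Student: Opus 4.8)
The plan is to realize the quasi-metric as the visual quasi-metric $\r(\x,\y)\asymp e^{-(\x|\y)_o}$ attached to the Gromov product based at $o$, so that $B(\x,r)$ with $r=e^{-R}$ is comparable to $\{\y\in\partial X:(\x|\y)_o\ge R\}$, and then to compute the local dimension $\lim_{r\to 0}\log\n_\m(B(\x,r))/\log r$ at the random limiting point. For $\n_\m$-almost every trajectory the sequence $\{w_n o\}$ converges to a boundary point $w_\infty$ whose law is $\n_\m$ (Maher--Tiozzo), and since $\m$ has finite first moment Kingman's subadditive ergodic theorem gives $d(o,w_n o)/n\to l_\m>0$ almost surely. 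By the sublinear tracking of geodesics by random walks in a hyperbolic space, the ray $[o,w_\infty)$ passes within distance $o(n)$ of $w_n o$, so $w_\infty$ lies in the shadow $S_n$ cast from $o$ by a ball of radius $o(n)$ about $w_n o$, and $S_n\asymp B(w_\infty,e^{-d(o,w_n o)+o(n)})$. This reduces the whole computation to the behaviour of $\n_\m(S_n)$.

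The single analytic input I need is the entropy--shadow estimate: for $\n_\m$-almost every trajectory,
$$\lim_{n\to\infty}-\frac{1}{n}\log\n_\m(S_n)=h_\m.$$
Granting it, for small $r=e^{-R}$ I choose the index $n$ with $d(o,w_n o)\approx R$, so that $n\approx R/l_\m$ and $B(w_\infty,r)\asymp S_n$; then
$$\frac{\log\n_\m\bigl(B(w_\infty,r)\bigr)}{\log r}\approx\frac{-h_\m n}{-l_\m n}=\frac{h_\m}{l_\m}.$$
Because the value $h_\m/l_\m$ is a deterministic constant, $\n_\m$ is exact dimensional, and by the standard identification of the Hausdorff dimension of a measure with its almost sure constant local dimension one obtains $\dim\n_\m=h_\m/l_\m$; positivity is then immediate since $h_\m$ and $l_\m$ are both positive for the non-elementary group $\gr(\m)$.

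To establish the entropy--shadow estimate I would treat the two inequalities separately. The lower bound $\n_\m(S_n)\ge e^{-(h_\m+\e)n}$ comes from entropy: finite entropy makes the Shannon theorem for random walks available, so $-\tfrac1n\log\m^{\ast n}(w_n)\to h_\m$ almost surely, and, because finite first moment entails finite logarithmic moment and the action is acylindrical, Maher--Tiozzo identify $(\partial X,\n_\m)$ as the Poisson boundary of $(\G,\m)$; this lets one bound $\n_\m(S_n)$ below by the first-passage probability to $w_n$, itself at least $\m^{\ast n}(w_n)$. The reverse bound $\n_\m(S_n)\le e^{-(h_\m-\e)n}$ is the crux and the main obstacle: in the proper setting of Theorems \ref{exact} and \ref{exact-semigroup} it rests on the counting and covering arguments afforded by the geometric hypotheses there, which fail here since infinitely many group elements may sit at a given distance from $o$. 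This is exactly where acylindricity is indispensable: its defining bound on the number of elements displacing two distant points only boundedly serves as a substitute for local finiteness, controlling the overlaps of the shadows $\{S_n\}$ and yielding a shadow lemma with bounded multiplicity, from which the upper bound on $\n_\m(S_n)$, and hence the lower bound $\dim\n_\m\ge h_\m/l_\m$, follows. Combining the two inequalities gives the entropy--shadow estimate and, with it, the theorem.
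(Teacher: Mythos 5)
Your overall architecture (shadows comparable to balls, drift from Kingman, Shannon for the entropy, Frostman to pass from almost-sure local dimension to Hausdorff dimension) matches the paper's, and your lower bound on the shadow measure via first-passage probabilities is essentially the Le Prince argument the paper also uses. But the step you yourself flag as the crux --- the upper bound $\n_\m(S_n)\le e^{-(h_\m-\e)n}$ --- is exactly where the proposal stops being a proof. You assert that acylindricity ``yields a shadow lemma with bounded multiplicity,'' but no such lemma is stated, and that is not how the argument goes. The actual mechanism is this: following Maher--Tiozzo one introduces, for a pair of boundary points $(\a,\b)$, the set $\Oc_{K,R,v}(\a,\b)$ of \emph{bounded geometry} elements, and acylindricity enters only through the counting estimate $\sharp\{g \in \G : go\in\Oc_{K,R,v}(\a,\b)o\cap B(o,r)\}\le Cr$ (linear growth). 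One then runs the \emph{bilateral} walk and uses ergodicity of the shift on the two-sided path space to show that the forward walk visits $\Oc(\o)o$ at times $\t_k$ with $\t_{k+1}/\t_k\to 1$ almost surely. On the good event $A_\e$, if $w_no$ lands in $B(\x(ln),2\e n+C_0)$ then at some time $\t_i\in[n,(1+\e)n]$ the walk sits in a set of cardinality $O(n)$, and combining this with $\m^{\ast n}(w_n)\le e^{-n(h-\e)}$ yields $\Pr(A_\e\cap\{w_no\in B(x_n,2\e n+C_0)\})\le C'(l+\e)n\,e^{-n(h-\e)}$. Nothing in your sketch produces this counting; a bound on the multiplicity of overlaps of the shadows $S_n$ does not control the number of candidate positions of $w_n$ inside a single ball of radius $2\e n$, which is the quantity that must be finite (indeed, linear) for the entropy bound to close.

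There is a second omission: even granting the probability estimate, you cannot conclude $\n_\m(S_n)\le e^{-(h-\e)n}$, because the event $\{\x_\o\in S_n\}$ also receives mass from samples outside $A_\e$. The paper handles this by Rohlin disintegration: it sets $F_\e:=\{\x : \Pr_\x(A_\e)\ge\e\}$, shows $\n_\m(F_\e)\ge 1-2\e$, and obtains the exponential decay only for $\n_\m(F_\e\cap S(x_n,R))$; the restriction to $F_\e$ is then removed by an ergodicity and stationarity argument (the set of points whose lower local dimension exceeds $h/l-2\e$ is shown to be $\G$-invariant). Your proposal has no substitute for this conditioning step, so the ``entropy--shadow estimate'' as you state it (a genuine limit for $\n_\m(S_n)$ itself) is stronger than what the method delivers and is not justified. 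In short: the skeleton is right, but the two load-bearing ingredients --- the linear-growth counting via bounded geometry elements together with the ergodic return-time argument, and the passage from a path-space probability bound to a boundary-measure bound via conditional measures --- are missing.
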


\subsection{Historical background}

There is a long history on the question to obtain the Hausdorff dimension of a measure, or more strongly, to show that it is exact dimensional in smooth and non-smooth dynamical systems, where the formula which relates dimension, entropy and Lyapunov exponents (this corresponds to the drift) has been investigated (e.g., \cite{Young}).
Recall that for a probability measure $\n$ on a metric space, we call the $\liminf$ (resp.\ the $\limsup$) of $\log \n\left(B(\x, r)\right)/\log r$ as $r \to 0$ the {\it lower local dimension} (resp.\ the {\it upper local dimension}) at $\x$, and call the value the {\it local dimension} at $\x$ when these two coincide.
We call the measure $\n$ {\it exact dimensional} when the local dimension exists at $\n$-almost every $\x$ and is a constant.
For a probability measure invariant under a $C^2$-diffeomorphism on a compact smooth Riemannian manifold, 
Ledrappier and Young showed that the measure has the local dimensions along stable and unstable local manifolds respectively \cite{LY}.
As a special case, they showed that if the measure is hyperbolic, then its upper local dimension is at most the sum of those two local dimensions.
The problem had been open for a while as to whether the local dimension of it is precisely the sum --- known as the Eckmann-Ruelle conjecture ---; this was later confirmed by Barreira, Pesin and Schmeling for a $C^{1+\a}$-diffeomorphism \cite{BPS}.
See \cite[Chap.\ 8]{P} for more on a background on this problem.
Concerning non-smooth systems, 
Feng and Hu showed the exact dimensionality for self-similar measures in $\R^d$ \cite{FengHu}, and
Hochman made notable progresses about the dimension of self-similar measures on the line, and their applications to the absolutely continuous versus singular question \cite{Hochman}. 
Recently, B\'ar\'any and Ka\"enm\"aki proved that every self-affine measure on the plane is exact dimensional and obtained a formula for the Hausdorff dimension \cite{BK}.
They also proved that every (quasi-) self-affine measure on $\R^d$ is exact dimensional under some condition on the linear parts.
See \cite{Hochman} and \cite{BK} for a background and recent progress in this direction.

Ledrappier introduced this problem in the context of random walks.
He established the formula that a dimension of $\n_\m$ coincides with $h_\m/(2\l_\m)$ for random walks on discrete subgroups of $SL(2, \C)$, where $\l_\m$ is the Lyapunov exponent \cite{L83}.
Note, however, that there the dimension of the harmonic measure $\n_\m$ is defined by the constant to which 
$
\log \n_\m\left(B(\x, r)\right)/\log r
$
converges in measure as $r \to 0$ instead of almost everywhere in $\n_\m$.
For finitely generated free groups, he showed that the harmonic measure $\n_\m$ is exact dimensional and $\dim \n_\m=h_\m/l_\m$ when $\m$ has finite first moment \cite[Theorem 4.15]{L01}.
Kaimanovich strengthened this result to arbitrary free groups (including an infinitely generated case) and established the dimension formula for a general class of processes on trees \cite{K98}.
Le Prince generalized this method to a discrete subgroup of isometries of proper geodesic Gromov hyperbolic space $X$.
He showed that for a probability measure $\m$ of finite first moment,
the upper local dimension is at most $h_\m/l_\m$,
and $\dim \n_\m \le h_\m/l_\m$ \cite{LP07}.
This was applied to show that 
$\dim \n_\m$ can be smaller than any given value for a (symmetric) probability measure $\m$.
He also showed that when it comes to a weaker notion of dimension --- the {\it box dimension} ---, then the box dimension of $\n_\m$ equals $h_\m/l_\m$ \cite[Theorem 3.1]{LP08}.
Soon after that, Blach\`ere, Haissinsky and Mathieu proved that for a word hyperbolic group (with a general class of metrics),
if $\m$ is a symmetric finitely supported probability measure, 
then the harmonic measure $\n_\m$ is exact dimensional and established the formula $\dim \n_\m=h_\m/l_\m$ \cite[Theorem 1.3 and Corollary 1.4]{BHM11}.
Their proof is based on the fact that the Green function is almost multiplicative along a geodesic (Ancona's inequality), which implies that $\n_\m$ is doubling.

A finer dimensional property of $\n_\m$ is investigated for a word hyperbolic group (with a word metric) in \cite{T}.
There it is proved that if $\m$ is finitely supported (not necessarily symmetric) and the support generates the whole group as a semigroup, 
then the harmonic measure $\n_\m$ is exact dimensional, and the singularity spectrum of $\n_\m$ (the multifractal spectrum) is obtained.
A recent notable progress is made by Gou\"ezel, Math\'eus and Maucourant \cite{GMM}.
One of their results implies that if $\G$ is a word hyperbolic group (with a word metric) which is not virtually free, $\m$ has a finite support (more generally, has superexponential moment) and the support generates $\G$ as a semigroup, then $\dim \n_\m < D_\G$, where $D_\G$ denotes the Hausdorff dimension of the boundary $\partial \G$.
Their approach is also based on the estimate of Green functions (Ancona's inequality and its strengthened version), which is used to show that the harmonic measure $\n_\m$ has the maximal dimension only when it is in the unique measure class of maximal dimension (a Gibbs property).
See the introduction of \cite{KLP} for more about the historical background of dimension formula for random walks on groups.

Concerning an extension to groups acting on a non-proper hyperbolic space, 
recently, Das, Simmons and Urba\'nski construct and study Patterson-Sullivan measures for groups acting on a class of non-proper hyperbolic spaces \cite{DSU}.
They investigate exact dimensionality of such measures on the corresponding boundary.
A class of groups acting on a non-proper hyperbolic space acylindrically covers various important examples which are out of proper settings.
Mathieu and Sisto study random walks on those groups and obtain the regularity of the entropy and the drift among others
\cite{Mathieu-Sisto}.

\subsection{Overview of the proof}
Let us illustrate the proof of Theorem \ref{exact} for a probability measure $\m$ of finite first moment. 
The bound for the upper local dimension of $\n_\m$ is known to be $h_\m/l_\m$ in a fairly general setting by Le Prince \cite{LP07}[Proposition 2.3] (see Theorem \ref{upperbound} below); hence all the point is to show the corresponding lower bound.
Since the boundary $\partial X$ is not necessarily a totally disconnected Cantor-like set, Ledrappier's argument for free groups is not applicable in a straightforward way; and also since the $\m$ is only assumed to have finite first moment, unlike in the case proved by Blach\`ere {\it et al.}\ \cite{BHM11},
in this case there is no Ancona's inequality for the Green function.
Here we extensively make use of ergodicity (or its substitute) and stationarity of the harmonic measure $\n_\m$, and the geodesic tracking property of random walks on hyperbolic spaces.

Let $A_\e$ be an event defined in the space of samples $\O$ as on that event, $\m^{\ast n}(w_n)$ and $d(o, w_n o)$ have the right asymptotics, i.e., $-\log \m^{\ast n}(w_n) \sim n h_\m$ and $d(o, w_n o) \sim n l_\m$ within an error $\e n$ for all large enough $n$.
Given a $\n_\m$-almost every limiting point $\x$ in the boundary $\partial \G$ and a (unit speed) geodesic ray, also denoted by $\x$, starting from $o$ towards that point, we show that for every $\e>0$ and all large enough $n$,
\begin{equation*}\label{A}
\Pr \left(A_\e \cap \{d(w_n o, \x(l_\m n)) \le \e n)\}\right) \le e^{-n (h_\m-\e)}.
\end{equation*}
This already suffices for free groups \cite[Theorem 4.15]{L01} (where we can use a martingale technique to conclude the desired estimate), and also for the box dimension \cite[Theorem 3.1]{LP08}.
In order to proceed in general hyperbolic spaces,
we transfer the condition $A_\e$ to the boundary set $\partial X$.
Let $F_\e$ be the set in $\partial X$ where conditioned on almost every point in $F_\e$, the event $A_\e$ occurs with a positive probability.
Here to define $F_\e$, we use a system of conditional probability measures (Rohlin's theory of disintegration).
The construction will imply that the set $F_\e$ has a positive $\n_\m$-measure.
Then we show that
for every $\e > 0$, and for $\n_\m$-almost every $\x$ in $\partial X$,
\begin{equation}\label{intF}
\liminf_{r \to 0}\frac{\log \n_\m\left(F_\e \cap B(\x, r)\right)}{\log r}\ge \frac{h_\m}{l_\m} -\e.
\end{equation}
See Theorem \ref{F} for the precise statement.
This enables us to show the lower bound of local dimension by using ergodicity and stationarity of the harmonic measure $\n_\m$,
where we use the assumption that the semigroup $\sgr(\m)$ is a group; and conclude Theorem \ref{exact}.
The above (\ref{intF}) follows from the construction of such a set $F_\e$ and the geodesic tracking property of random walks.

Theorem \ref{F}, in fact, holds when the group $\gr(\m)$ is non-elementary.
In the case of $\H^n$, we make use of the fact that the boundary is bi-Lipschitz equivalent to the standard sphere $S^{n-1}$; 
there we are able to use instead of ergodicity of $\n_\m$, the Borel density theorem. 
More generally, if the space $X$ has bounded growth at some scale, then the boundary $(\partial X, \r^\a)$ for an $\a \in (0,1)$ admits a bi-Lipschitz embedding into $\R^n$ for some $n$ \cite{BS}; this is used to apply the Borel density theorem and obtain Theorem \ref{exact-semigroup} for semigroup random walks.
This approach using the Borel density theorem unifies Ledrappier's proof for free groups, where the boundary is totally disconnected, and also Blach\`ere {\it et al.}'s one, where the harmonic measure $\n_\m$ is doubling; in those cases the Borel density theorem holds (e.g.\ \cite[Theorem 1.8]{H}).
In the non-proper case Theorem \ref{acyl}, we need to show a corresponding proposition to Theorem \ref{F}.
This is given in Proposition \ref{pro-non-proper} by employing techniques developed by Maher and Tiozzo \cite{MT}.

\subsection{Applications}

\subsubsection{Kleinian groups}

Let $\H^n$ be the $n$-dimensional real hyperbolic space of constant sectional curvature $-1$ for $n \ge 2$.
The boundary of $\H^n$ is bi-Lipschitz equivalent to the standard sphere $S^{n-1}$ (e.g.\ \cite[III.H.3.19-20]{BH}).
Let $\G$ be a discrete subgroup of isometries of $\H^n$ (a Kleinian group).
There is a natural correspondence between the group of isometries on $\H^n$ and the group of conformal transformations on $S^{n-1}$, and
the dimension formula for a conformal measure on the Riemann sphere $S^2$ is well-studied (e.g.\ \cite{PU}).
The following is a version of it for the harmonic measure.

\begin{corollary}\label{H}
Let $\H^n$ be the $n$-dimensional real hyperbolic space, and $\G$ a discrete subgroup of isometries of $\H^n$.
If a probability measure $\m$ on $\G$ has finite first moment and 
the group $\gr(\m)$ generated by the support of $\m$ is non-elementary,
then the harmonic measure $\n_\m$ on $S^{n-1}$ is exact dimensional and
$$
\dim \n_\m=\frac{h_\m}{l_\m}.
$$
In particular, $\dim \n_\m>0$.
\end{corollary}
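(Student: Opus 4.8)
The plan is to recognize Corollary \ref{H} as a direct consequence of Theorem \ref{exact-semigroup}, after verifying that $\H^n$ meets its hypotheses and then transporting the conclusion from the Gromov boundary $(\partial \H^n, \r)$ to the round sphere $S^{n-1}$. I would single out Theorem \ref{exact-semigroup} rather than Theorem \ref{exact} precisely because the corollary only assumes that $\gr(\m)$ is non-elementary, not that the semigroup $\sgr(\m)$ is; this is the weaker hypothesis, and it is exactly the one permitted once we are in a space of bounded growth at some scale.

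First I would check the structural hypotheses on the space. The real hyperbolic space $(\H^n, d)$ is a proper geodesic Gromov hyperbolic metric space, and since it is a complete simply connected Riemannian manifold of constant sectional curvature $-1$ (in particular of uniformly bounded negative curvature), it has bounded growth at some scale, as recorded after the statement of Theorem \ref{exact-semigroup}. Next, a discrete subgroup $\G$ of isometries of $\H^n$ has exponential growth relative to $\H^n$: the orbit-counting bound (\ref{exponential-growth}) is the standard volume-growth estimate for Kleinian groups, as noted just after (\ref{exponential-growth}). The remaining two hypotheses, that $\gr(\m)$ is non-elementary and that $\m$ has finite first moment, are exactly the assumptions of the corollary. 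Hence Theorem \ref{exact-semigroup} applies and gives that $\n_\m$ is exact dimensional on $(\partial \H^n, \r)$ with local dimension equal to $h_\m/l_\m$ at $\n_\m$-almost every point, whence $\dim \n_\m = h_\m/l_\m$; positivity of this value follows from $h_\m > 0$ and $0 < l_\m < \infty$, which hold under non-elementarity of $\gr(\m)$ and finiteness of the first moment.

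It then remains to identify $(\partial \H^n, \r)$ with $S^{n-1}$. By \cite[III.H.3.19-20]{BH}, the Gromov boundary $\partial \H^n$, equipped with its visual quasi-metric $\r$, is bi-Lipschitz equivalent to the standard round sphere $S^{n-1}$. Both the local dimension $\lim_{r \to 0}\log \n_\m(B(\x, r))/\log r$ and the Hausdorff dimension $\dim \n_\m$ are invariant under a bi-Lipschitz homeomorphism $\f$: such a map sends $B(\x, r/C)$ into $\f^{-1}(B(\f\x, r))$ and the latter into $B(\x, Cr)$ for a fixed constant $C$, so $\log$ of the pushforward measure of a ball differs from $\log \n_\m(B(\x, r))$ by an additive $O(1)$, which is negligible after division by $\log r \to -\infty$. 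Therefore the exact dimensionality and the value $h_\m/l_\m$ pass unchanged to the sphere.

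The one genuine point requiring care, and the step I would treat as the main obstacle, is the normalization of the boundary metric. The dimension formula produced by Theorem \ref{exact-semigroup} is computed with respect to the particular quasi-metric $\r$ on the abstract boundary, whereas the corollary is phrased on $S^{n-1}$ with its Euclidean (chordal) metric; since a power $\r^\a$ with $\a \in (0,1)$ would rescale every local dimension by the factor $1/\a$, one must verify that the comparison is genuinely bi-Lipschitz and not merely bi-H\"olderian. This is exactly what the cited result in \cite{BH} guarantees for the correct choice of visual parameter, so no rescaling intervenes and $h_\m/l_\m$ is the dimension on $S^{n-1}$ itself.
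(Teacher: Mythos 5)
Your proposal is correct and follows essentially the same route as the paper: verify that a discrete subgroup of isometries of $\H^n$ is countable and has exponential growth relative to $\H^n$, that $\H^n$ has bounded growth at some scale, invoke Theorem \ref{exact-semigroup}, and transfer the conclusion to $S^{n-1}$ via the bi-Lipschitz equivalence of $(\partial \H^n, \r)$ with the round sphere and the bi-Lipschitz invariance of Hausdorff dimension. Your additional remark on why the comparison must be bi-Lipschitz rather than merely bi-H\"olderian is a useful clarification that the paper leaves implicit.
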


\begin{proof}
A discrete subgroup $\G$ of isometries of $\H^n$ is countable and has exponential growth relative to $\H^n$.
Since $\H^n$ has bounded growth at some scale, and the Hausdorff dimension is invariant under a bi-Lipschitz mapping, 
the claim follows directly from Theorem \ref{exact-semigroup}.
\end{proof}

\subsubsection{Continuity of dimension}

The dimension formula implies that the Hausdorff dimension of $\n_\m$ is continuous with respect to $\m$ under some appropriate condition.
We state a claim for a word hyperbolic group endowed with a word metric; then the harmonic measure is defined on the boundary of the Cayley graph.

\begin{corollary}\label{conti}
Let $\G$ be a word hyperbolic group, $\m$ be a probability measure $\m$ which has finite first moment and whose group $\gr(\m)$ generated by the support of $\m$ is non-elementary.
If a sequence of probability measures $\m_i$ converges to $\m$ weakly, i.e., $\m_i(g) \to \m(g)$ for each $g$ in $\G$,
and also $H(\m_i) \to H(\m)$ and $L(\m_i) \to L(\m)$,
then
$\dim \n_{\m_i} \to \dim \n_\m$.
\end{corollary}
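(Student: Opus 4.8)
The plan is to use the dimension formula to reduce the claim to the continuity of the asymptotic entropy $h_\m$ and the drift $l_\m$. A word hyperbolic group acts on its Cayley graph, which is a proper geodesic hyperbolic space of bounded growth at some scale, and the group has exponential growth relative to it; so Theorem~\ref{exact-semigroup} gives $\dim\n_\nu=h_\nu/l_\nu$ with $h_\nu,l_\nu>0$ for every probability measure $\nu$ of finite first moment with $\gr(\nu)$ non-elementary. To apply this to $\m_i$, I would first note that $\gr(\m_i)$ is non-elementary for all large $i$: non-elementarity of $\gr(\m)$ is witnessed by two independent loxodromic elements, each a word in finitely many elements of $\mathrm{supp}(\m)$ and their inverses; since $\m_i(g)\to\m(g)>0$ for each of these finitely many letters, $\mathrm{supp}(\m_i)$ contains all of them for large $i$, whence $\gr(\m_i)$ is non-elementary. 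As $L(\m_i)\to L(\m)<\infty$, the measures $\m_i$ have finite first moment, so the formula applies to them, and since $l_\m>0$ it suffices to prove $h_{\m_i}\to h_\m$ and $l_{\m_i}\to l_\m$.

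Next I would upgrade the hypotheses to stronger modes of convergence. Because $\m_i\to\m$ pointwise and all are probability measures, Scheff\'e's lemma gives $\sum_g|\m_i(g)-\m(g)|\to0$. Applying the nonnegative form of Scheff\'e's lemma to the summable families $g\mapsto\m_i(g)\,d(o,go)$ and $g\mapsto-\m_i(g)\log\m_i(g)$, whose pointwise limits have the prescribed finite sums $L(\m)$ and $H(\m)$ by hypothesis, yields $\sum_g|\m_i(g)-\m(g)|\,d(o,go)\to0$ and $\sum_g|\m_i(g)\log\m_i(g)-\m(g)\log\m(g)|\to0$. Thus $\m_i\to\m$ simultaneously in total variation, in first moment, and in first-level entropy.

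The \emph{upper} estimates are then straightforward from subadditivity. Both $H(\nu^{\ast n})$ and $L(\nu^{\ast n})$ are subadditive in $n$, so $h_\nu=\inf_n\frac{1}{n}H(\nu^{\ast n})$ and $l_\nu=\inf_n\frac{1}{n}L(\nu^{\ast n})$. For fixed $n$ the maps $\nu\mapsto\frac{1}{n}L(\nu^{\ast n})$ and $\nu\mapsto\frac{1}{n}H(\nu^{\ast n})$ are continuous at $\m$: expanding $\m_i^{\ast n}-\m^{\ast n}=\sum_{k=1}^n\m_i^{\ast(k-1)}\ast(\m_i-\m)\ast\m^{\ast(n-k)}$ and using $d(o,g_1\cdots g_no)\le\sum_jd(o,g_jo)$ together with the convergences above controls $L$, while the corresponding estimate on the entropy densities controls $H$. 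An infimum of continuous functions is upper semicontinuous, so $\limsup_ih_{\m_i}\le h_\m$ and $\limsup_il_{\m_i}\le l_\m$.

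The main obstacle is the matching \emph{lower} semicontinuity, $\liminf_ih_{\m_i}\ge h_\m$ and $\liminf_il_{\m_i}\ge l_\m$: the representation as an infimum over $n$ does not transmit this direction, since the convergence $\frac{1}{n}H(\nu^{\ast n})\to h_\nu$ (and likewise for $L$) is not uniform in $\nu$ near $\m$ a priori. I would close this gap by producing such uniformity from the geodesic tracking and deviation estimates for random walks on hyperbolic groups underlying Theorem~\ref{exact}: on the event where $-\log\m^{\ast n}(w_n)$ and $d(o,w_no)$ concentrate near $nh_\m$ and $nl_\m$, the strong convergence of $\m_i$ to $\m$ keeps the relevant concentration constants stable under small perturbations of the step distribution, so that $h_{\m_i}$ and $l_{\m_i}$ are uniformly well approximated by their finite-$n$ surrogates. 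This is precisely the regularity of the asymptotic entropy and drift for random walks on (acylindrically) hyperbolic groups, and I would invoke the results of Mathieu and Sisto \cite{Mathieu-Sisto} to deduce continuity of $h_\m$ and $l_\m$, and hence of $\dim\n_\m=h_\m/l_\m$.
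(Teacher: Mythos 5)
Your overall strategy coincides with the paper's: apply the dimension formula $\dim\n_{\m_i}=h_{\m_i}/l_{\m_i}$ (after checking that $\gr(\m_i)$ is non-elementary for large $i$, which you do correctly) and reduce the corollary to the continuity of the entropy and the drift at $\m$. Your upgrade of the hypotheses via Scheff\'e's lemma and your upper-semicontinuity argument from subadditivity are both fine; as you say, the whole content of the statement is the matching lower semicontinuity $\liminf_i h_{\m_i}\ge h_\m$ and $\liminf_i l_{\m_i}\ge l_\m$.

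That is exactly where your proposal has a gap. The sketch you offer --- that the concentration of $-\log\m^{\ast n}(w_n)$ and $d(o,w_no)$ near $nh_\m$ and $nl_\m$ is ``stable under small perturbations of the step distribution'' --- is not an argument: the rate at which $\frac1nH(\m_i^{\ast n})$ and $\frac1nL(\m_i^{\ast n})$ approach their limits is precisely the quantity whose uniformity in $i$ you would need to establish, and nothing in Theorems \ref{exact} or \ref{tracking} provides it. The reference you fall back on does not close this gap either: the regularity results of Mathieu and Sisto \cite{Mathieu-Sisto} for the entropy and the drift are proved via deviation inequalities that require finite \emph{exponential} moment, and they concern a different (stronger) topology on step distributions; they do not apply to measures with only finite first moment converging weakly with $H(\m_i)\to H(\m)$ and $L(\m_i)\to L(\m)$. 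The paper instead invokes Gou\"ezel, Math\'eus and Maucourant \cite[Proposition 2.3 and Theorem 2.9]{GMM}, whose continuity statements for $l_\m$ and $h_\m$ are formulated for exactly this mode of convergence; the lower semicontinuity there is a genuine theorem (resting on hyperbolicity for the drift and on the boundary realization of the Poisson boundary for the entropy), not a soft consequence of tracking estimates. So your proof is structurally the same as the paper's, but the one nontrivial input is attributed to a result that does not cover the stated hypotheses, and the heuristic offered in its place does not substitute for it.
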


\begin{proof}
Since $\gr(\m)$ is a non-elementary subgroup, 
we may assume that $\gr(\m_i)$ is a non-elementary subgroup for all large enough $i$.
Results due to Gou\"ezel, Math\'eus and Maucourant imply that
$l_{\m_i} \to l_\m$ and $h_{\m_i} \to h_\m$ \cite[Proposition 2.3 and Theorem 2.9]{GMM}.
The formula $\dim \n_{\m_i}=h_{\m_i}/l_{\m_i}$ implies that $\dim \n_{\m_i} \to \dim \n_\m$.
\end{proof}

\begin{remark}
The continuity of $h_\m$ and $l_\m$ was first proved by Erschler and Kaimanovich \cite[Theorem 1]{EK}.
A much stronger regularity result is known for probability measures supported on a finite set.
For a hyperbolic group with some metric (e.g.\ a group acting on a hyperbolic space $\H^n$ cocompactly, endowed with the induced metric), or for a non-elementary word hyperbolic group with a word metric,
the entropy and the drift are analytic as functions of step distribution $\m$ (\cite[Corollary 4.2]{GL} and \cite[Theorem 1.1]{G15}).
This implies that $\dim \n_\m$ is also analytic with respect to $\m$ as indicated in \cite{G15}.

\end{remark}

\subsubsection{Brownian motion on a regular covering}

We shall show an exact dimensionality result in a smooth setting.
Let $(M, g)$ be a connected complete Riemannian manifold with bounded sectional curvature. 
Consider the case when $M$ is a regular cover of a Riemannian manifold of finite volume.
A particular case is when $M$ is a universal cover of a compact Riemannian manifold; but in general we assume neither that the covering space is simply connected nor that the quotient manifold is compact.
We denote by $p_t(x, y)$ the (minimal) heat kernel on $M$, i.e.\ a fundamental solution of the heat equation
$(\partial/\partial t)u=\Delta u$, where $\D$ is the Laplacian corresponding to the metric.
We consider continuous analogues of the entropy and the drift, which were introduced by Kaimanovich \cite{K86} and Guivarc'h \cite{G}, respectively. 
Namely, for every $x$ in $M$, let
$$
h_M:=\lim_{t \to \infty}-\frac{1}{t}\int_M p_t(x, y)\log p_t(x, y)dm(y),
$$
$$
l_M:=\lim_{t \to \infty}\frac{1}{t}\int_M d(x, y)p_t(x, y)dm(y),
$$
where $d$ denotes the geodesic distance and $m$ denotes the Riemannian volume measure on $M$ (normalized on a fundamental domain).
In both cases the limit exists and is a constant independent of $x$.
The entropy and the drift are regarded as those of the corresponding Brownian motion.
When the Riemannian manifold $M$ is hyperbolic in the sense of Gromov (Section \ref{hyperbolic}), we denote the boundary by $\partial_\infty M$.
For example, if $M$ is simply connected and the sectional curvature is uniformly bounded in negative values, the boundary $\partial_\infty M$ is topologically a sphere, which admits a $C^\a$-structure for some $\a$ in $(0, 1)$ by Rauch's comparison theorem.
The harmonic measure $\n_M$ is analogously defined for the Brownian motion and is supported on $\partial_\infty M$.
Then we have a dimension formula for this harmonic measure.

\begin{theorem}\label{Bm}
Let $(M, g)$ be a connected complete non-compact Riemannian manifold with bounded sectional curvature. 
If $M$ is hyperbolic and a regular cover of a Riemannian manifold of finite volume, and the covering transformation group $\G$ is non-elementary,
then $h_M$ and $l_M$ are positive and finite, the harmonic measure $\n_M$ associated with the Brownian motion
is exact dimensional and
$$
\dim \n_M=\frac{h_M}{l_M}.
$$
In particular, $\dim \n_M>0$.
\end{theorem}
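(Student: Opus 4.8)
The plan is to reduce the continuous statement to the discrete Theorem \ref{exact-semigroup} by discretizing the Brownian motion into a random walk on the deck transformation group $\G$, which acts by isometries on $X:=M$. First I would record that $X$ is a proper geodesic hyperbolic space: by Hopf--Rinow a complete Riemannian manifold is proper and geodesic, $M$ is hyperbolic by hypothesis, and $\partial X=\partial_\infty M$. Bounded sectional curvature supplies the two geometric hypotheses of Theorem \ref{exact-semigroup}. Indeed, G\"unther's and Bishop--Gromov's volume comparisons give, at a fixed small scale, a uniform upper bound for the volume of balls of radius $R$ and a uniform lower bound for the volume of balls of radius $r$, whence every $R$-ball is covered by a bounded number of $r$-balls; this is \emph{bounded growth at some scale}. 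For \emph{exponential growth relative to} $X$, I would use that $\G$ acts freely and properly discontinuously, so the orbit $\G o$ is $\d$-separated for some $\d>0$; packing disjoint $\d/2$-balls into $B(o,r+\d/2)$ and comparing volumes using the curvature lower bound bounds $\sharp\{g:d(o,go)\le r\}$ by $Ce^{Cr}$, the finite volume of the quotient entering through the normalization of the comparison.

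Next I would invoke the Lyons--Sullivan/Kaimanovich discretization of Brownian motion adapted to the covering $M\to M/\G$. This produces a probability measure $\m$ on $\G$ together with an increasing sequence of stopping times $0=T_0<T_1<T_2<\cdots$ for the path $B_t$ such that the $\G$-valued process $w_k o=B_{T_k}$ is an i.i.d.\ random walk with step distribution $\m$. Three properties are needed. \emph{(i)} The support of $\m$ generates $\G$, so $\gr(\m)=\G$ is non-elementary by hypothesis. \emph{(ii)} $\m$ has finite first moment: since the Laplacian comparison theorem (curvature bounded below) bounds the drift of the radial process $d(o,B_t)$ from above, one gets $\E[d(o,B_t)]\le Ct$, and the discretization stopping times are integrable under bounded geometry, so $L(\m)=\E[d(o,B_{T_1})]<\infty$ by optional stopping. \emph{(iii)} $\n_M=\n_\m$: Brownian motion converges almost surely to a point $\x_\infty\in\partial_\infty M$, and since $T_k\uparrow\infty$ the subsampled points $B_{T_k}$ converge to the \emph{same} $\x_\infty$; hence the law of $\x_\infty$ is simultaneously the continuous harmonic measure $\n_M$ and the discrete harmonic measure $\n_\m$.

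With these in hand, Theorem \ref{exact-semigroup} applies to $(\G,\m)$ acting on $X=M$ and yields that $\n_\m$, hence $\n_M$, is exact dimensional with $\dim\n_M=\dim\n_\m=h_\m/l_\m$. It remains to identify $h_\m/l_\m$ with $h_M/l_M$. Writing $m:=\E[T_1]$, which is finite and positive, the renewal convergence $T_k/k\to m$ together with the subadditive ergodic theorem gives $l_\m=\lim_k \tfrac1k d(o,B_{T_k})=m\,l_M$, using $d(o,B_t)/t\to l_M$. The parallel identity $h_\m=m\,h_M$ is the content of Kaimanovich's entropy approach \cite{K86}, comparing the discrete entropy of the subsampled walk with the differential entropy rate of the heat semigroup along the common time parametrization. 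Dividing, $h_\m/l_\m=h_M/l_M$, which is therefore the exact dimension of $\n_M$. Finiteness of $h_M,l_M$ follows from that of $h_\m,l_\m$ (automatic here, since $l_\m\le L(\m)<\infty$ and $h_\m\le H(\m)<\infty$ by a Gibbs bound using exponential growth and finite first moment) together with $m<\infty$; positivity follows because $\gr(\m)$ non-elementary forces $h_\m,l_\m>0$.

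I expect the entropy identity $h_\m=m\,h_M$ to be the main obstacle. The drift identity is a clean law of large numbers and the identification $\n_M=\n_\m$ is purely geometric, but matching the discrete Shannon entropy of the subordinated walk with the continuous differential entropy requires genuine work: one must control the conditional entropy accumulated between consecutive stopping times and rule out entropy lost or gained in the subsampling. Here I would lean on the finite-entropy and finite-moment estimates above and on Kaimanovich's comparison, taking care that bounded sectional curvature and the finite-volume hypothesis make the relevant heat-kernel and return-time quantities integrable.
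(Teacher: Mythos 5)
Your overall strategy is the paper's: discretize the Brownian motion via Lyons--Sullivan/Kaimanovich into a random walk $(\G,\m)$ with finite first moment, identify $\n_M$ with $\n_\m$, establish the proportionalities $h_\m=m\,h_M$ and $l_\m=m\,l_M$ (the paper simply cites \cite[Corollaire 3.6]{KL} and \cite[Theorem 2]{K92} for exactly these, so the ``main obstacle'' you flag is handled by citation there as well), and apply the discrete dimension formula. Your verification of exponential growth relative to $M$ (separated orbit, disjoint $\d/2$-balls of equal volume, Bishop--Gromov upper bound) is also what the paper does.

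The one step that fails as written is your verification of \emph{bounded growth at some scale}. You claim a uniform lower bound on the volume of $r$-balls from curvature comparison, but G\"unther/Croke-type lower bounds require a lower bound on the injectivity radius, which is not available here: the quotient is only assumed to have finite volume (not to be compact) and $M$ is not assumed simply connected, so $M$ may have arbitrarily thin parts (e.g.\ lifted cusp neighbourhoods), where $r$-balls have arbitrarily small volume. The paper's remark that manifolds of bounded growth at some scale include ``every complete \emph{simply connected} Riemannian manifold with uniformly bounded negative sectional curvature'' is deliberately restricted to the simply connected case. The clean repair stays entirely within your framework: the Lyons--Sullivan/Kaimanovich discretization produces a measure $\m$ that is \emph{symmetric} and supported on all of $\G$, so $\sgr(\m)=\gr(\m)=\G$ is non-elementary, and you can invoke Theorem \ref{exact} instead of Theorem \ref{exact-semigroup}; Theorem \ref{exact} needs only exponential growth relative to $M$, which you have already established, and no bounded-growth-at-some-scale hypothesis. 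This is precisely why the paper records the symmetry of $\m$ and offers ``Theorem \ref{exact} or \ref{exact-semigroup}''.
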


\begin{proof}
By Furstenberg-Lyons-Sullivan discretization, there is a probability measure $\m$ on the covering transformation group $\G$
such that $\m$ is supported on the whole group $\G$, symmetric, i.e., $\m(g)=\m(g^{-1})$, and has finite first moment with respect to the geodesic distance (\cite{BL} and \cite{K92}).
(See for the cocompact case by Lyons and Sullivan \cite{LS}, who generalized the case of $SL(2, \R)$ by Furstenberg \cite{F}.)
Moreover, the harmonic measure $\n_M$ induced by the Brownian motion starting at $o$ coincides with the one induced by the trajectory of random walk with step distribution $\m$ starting at the same point $o$, and there is a positive constant $T$ such that
$h_\m=T h_M$ and $l_\m=T l_M$ (\cite[Corollaire 3.6]{KL} and \cite[Theorem 2]{K92}).
The group $\G$ is generated by the support of $\m$ as a semigroup,
the condition of lower curvature bound implies that the volume growth of $M$ is at most exponential and that $\G$ has exponential growth relative to $M$, and thus both $h_\m$ and $l_\m$ are positive and finite.
Hence by the result for the random walk with step distribution $\m$ (Theorem \ref{exact} or \ref{exact-semigroup}),
the harmonic measure for the Brownian motion is exact dimensional, and the Hausdorff dimension equals $h_M/l_M$.
\end{proof}

\begin{remark}
This is a generalization of the result stated by Kaimanovich \cite{K90}, and proved by Blach\`ere {\it et al.}, who showed for the universal covering of a compact Riemannian manifold of negative curvature \cite[Theorem 1.9]{BHM11}.
Their proof also uses the dimension formula for a random walk via discretization; but
it requires that the covering be simply connected and the quotient manifold be compact.
Kifer and Ledrappier showed the Hausdorff dimension of harmonic measure (for each starting point) is positive on the boundary of a complete simply connected Riemannian manifold of sectional curvature uniformly bounded in negative values (without group action)\cite[Corollary 3.1]{Kifer-Ledrappier}.
\end{remark}

\subsubsection{Rank one symmetric spaces}

Let us consider a special class of hyperbolic spaces {\it rank one Riemannian symmetric spaces of noncompact type}; up to a compact factor, they are either hyperbolic $n$-spaces over the real, the complex, the quaternions $(n \ge 2)$, or the Cayley plane \cite[Section 19]{M}.
In this case, a homogeneous nature of the space provides a measure $\m$ for which the dimension $\dim \n_\m$ is maximal.

\begin{corollary}
Let $X$ be a rank one Riemannian symmetric space of noncompact type, and $\G$ be a discrete subgroup of isometries of $X$ such that $\G \backslash X$ has finite volume.
Then there exists a probability measure $\m$ on $\G$ supported on the whole $\G$ with finite first moment such that
$$
h_\m=l_\m D_{X},
$$
where $D_X$ denotes the Hausdorff dimension of $(\partial X, \r)$.
\end{corollary}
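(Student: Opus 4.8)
The plan is to realize the measure of maximal dimension on $\partial X$ as the harmonic measure of the Brownian motion on $X$, and then transfer it to a random walk on $\G$ by discretization, following the same route as in the proof of Theorem \ref{Bm}. Write $X=G/K$, where $G$ is the full isometry group of $X$ and $K$ is the stabilizer of the base point $o$. Since $X$ is a symmetric space, the heat kernel is $G$-invariant, and hence the harmonic measure $\n_M$ of the Brownian motion started at $o$ is invariant under $K=\mathrm{Stab}(o)$. The rank one hypothesis is precisely the statement that $K$ acts transitively on the boundary $\partial X$; consequently there is a unique $K$-invariant probability measure on $\partial X$, and $\n_M$ must coincide with it. This is the natural visual (Patterson--Sullivan) measure associated with $G$.

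The key point is that this $K$-invariant measure is Ahlfors $D_X$-regular for the quasi-metric $\r$, that is, $\n_M(B(\x, r)) \asymp r^{D_X}$ uniformly in $\x$; in particular $\dim \n_M=D_X$. This follows from homogeneity: transitivity of $K$ forces $\n_M(B(\x, r))$ to be essentially independent of $\x$, and the resulting exponent is the critical exponent of $G$, which on a rank one space equals the volume entropy and hence the Hausdorff dimension $D_X$ of $(\partial X, \r)$.

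Next, since $\G\backslash X$ has finite volume and $X$ has bounded (indeed pinched negative) sectional curvature, the Furstenberg--Lyons--Sullivan discretization produces, exactly as in the proof of Theorem \ref{Bm}, a symmetric probability measure $\m$ on $\G$ that is supported on the whole group $\G$, has finite first moment with respect to the Riemannian distance, and whose harmonic measure $\n_\m$ coincides with $\n_M$. Because $\m$ has full support we have $\sgr(\m)=\gr(\m)=\G$, which is non-elementary; the discrete group $\G$ has exponential growth relative to $X$; and a rank one symmetric space has bounded growth at some scale. Hence Theorem \ref{exact-semigroup} applies and gives $\dim \n_\m=h_\m/l_\m$. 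Combining this with $\dim \n_\m=\dim \n_M=D_X$ yields $h_\m=l_\m D_X$, as required.

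The main obstacle is the identification in the second paragraph: verifying that the Brownian harmonic measure $\n_M$ has Hausdorff dimension exactly $D_X$. This rests on recognizing $\n_M$ as the natural $K$-invariant (Patterson--Sullivan) measure and on establishing its Ahlfors regularity with respect to the boundary quasi-metric $\r$, matching the regularity exponent with $D_X$; both steps use the rank one homogeneity of $X$ in an essential way.
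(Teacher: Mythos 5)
Your proposal is correct and follows essentially the same route as the paper: identify the Brownian harmonic measure with the unique $K$-invariant conformal measure on $\partial X$ (which the paper, citing Coornaert, recognizes as the $D_X$-dimensional Hausdorff measure, giving $\dim \n_M = D_X$ just as your Ahlfors-regularity argument does), apply the Furstenberg--Lyons--Sullivan discretization exactly as in Theorem \ref{Bm}, and invoke the dimension formula (the paper uses Theorem \ref{exact} where you use Theorem \ref{exact-semigroup}; both apply here). The differences are only in which reference is cited for the regularity of the conformal measure and which version of the main theorem is invoked.
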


\begin{proof}
Let us fix a base point $o$ in $X$, then the stabilizer $K_o$ acts transitively on the boundary $\partial X$ which is homeomorphic to a sphere.
The unique $K_o$-invariant probability measure $m_o$ on $\partial X$ (the conformal measure) coincides with the harmonic measure for the Brownian motion starting at $o$.
The conformal measure $m_o$ is the $D_X$-dimensional Hausdorff measure, which is positive on every open set and finite on $\partial X$ (e.g.\ \cite[Th\'eor\`eme 7.7]{C}).
The Furstenberg-Lyons-Sullivan discretization as in the proof of Theorem \ref{Bm} yields a probability measure $\m$ on $\G$ of finite first moment, symmetric with the support $\G$, and the corresponding harmonic measure for random walk $\n_\m$ coincides with $m_o$; in particular $\dim \n_\m=D_X$.
Theorem \ref{exact} implies that $h_\m/l_\m=D_X$.
\end{proof}

\begin{remark}
If $\G$ is cocompact, i.e., the quotient manifold $\G \backslash X$ is compact, then the measure $\m$ is constructed to have a finite exponential moment, whence the above corollary follows from a result in \cite[The proof of Theorem 6.2]{BHM11}.
\end{remark}

\subsection{Organization of the paper}
In Section \ref{preliminary}, we recall some basic facts and known results which we use about Gromov hyperbolic spaces, the Hausdorff dimension of measures and random walks on groups.
In Section \ref{main}, we prove our main results Theorem \ref{exact}, \ref{exact-semigroup} and \ref{acyl}; in order to make it readable depending on various interest, we deal with the proper case and the non-proper one separately, and divide the section into subsections according to the cases.
We also show some additional result specific for word hyperbolic groups.
In Section \ref{questions}, we list some natural questions concerning our results.

\subsection{Notation}
We denote by $C, C', \dots$ constants, and their exact values may change from line to line.
We also denote by say, $C_\d$ a constant depending only on $\d$ if we want to emphasize its dependence on some parameter and independence of all the other parameters.

\section{Preliminary}\label{preliminary}

\subsection{Hyperbolic space}\label{hyperbolic}
We collect some facts about hyperbolic spaces in the sense of Gromov, based on \cite{GdlH}, \cite{Gro} and \cite{BH}.
For non-proper hyperbolic spaces, we refer to \cite{V} and the recent article \cite{DSU}.

Let $(X, d)$ be a metric space.
For $x, y$ and $z$ in $X$, we define the {\it Gromov product} by
$$
(x|y)_z:=\frac{d(z, x)+d(z, y)-d(x,y)}{2}.
$$
We say that $(X, d)$ is $\d$-{\it hyperbolic} if there exists a uniform constant $\d \ge 0$ such that
$$
(x|y)_w \ge \min\{(x|z)_w, (z|y)_w\} - \d
$$
for all $x, y, z$ and $w$ in $X$.
We say that $(X, d)$ is {\it hyperbolic} if it is $\d$-hyperbolic for some $\d$.
If $(X, d)$ is a geodesic metric space, then $(X, d)$ is hyperbolic if and only if there exists a $\d \ge 0$ such that all geodesic triangles are $\d$-{\it slim}, i.e., each side is included in the $\d$-neighbourhood of the other two sides, where $\d$ is possibly a different constant from the previous one.

Let us fix a base point $o$ in $X$.
We say that a sequence $\{x_n\}_{n=0}^\infty$ is {\it divergent} if $(x_n|x_m)_o \to \infty$ as $n, m \to \infty$.
Two divergent sequences $\{x_n\}_{n=0}^\infty$ and $\{y_n\}_{n=0}^\infty$ are equivalent 
if $(x_n|y_m)_o \to \infty$ as $n, m \to \infty$.
Let us define $\partial X$ as a set of equivalence class of divergent sequences, and call it the {\it boundary} of $X$.
For a point $\x$ in $\partial X$, we say that {\it $\{x_n\}_{n=0}^\infty$ converges to $\x$} if it is divergent and its equivalence class is $\x$.
The Gromov product is extended to the boundary by setting
$$
(\x|\y)_o:=\sup\left\{\liminf_{n, m \to \infty} (x_n|y_m)_o\right\},
$$
where the supremum is taken over all the sequences $\{x_n\}_{n=0}^\infty$ and $\{y_n\}_{n=0}^\infty$ converging to $\x$ and $\y$, respectively.
Here we write $(\x|\y)$ for $(\x|\y)_o$ when the base point is $o$ for simplicity of notation.
For all two sequences $\{x_n\}_{n=0}^\infty$ and $\{y_n\}_{n=0}^\infty$ converging $\x$ and $\y$ respectively,
by the $\d$-hyperbolicity, 
we have
\begin{equation}\label{Gpro}
(\x|\y) - 2\d \le \liminf_{n \to \infty}(x_n|y_n) \le (\x|\y).
\end{equation}

We say a metric space $(X, d)$ is {\it geodesic} when arbitrary two points can be joined by a geodesic segment.
If $X$ is a graph, we understand that it is geodesic by identifying each edge with the unit segment $[0,1]$.
For example, a Cayley graph and a complete Riemannian manifold are geodesic.
If $(X, d)$ is a geodesic metric space which is {\it proper}, i.e., every bounded closed ball is compact,
then 
for every $x$ in $X$ and for every boundary point $\x$ in $\partial X$, one can find a geodesic ray starting from $x$ and converging to $\x$
by the Ascoli-Arzel\`a theorem;
and the boundary $\partial X$ is identified with a set of equivalence class of geodesic rays,
where two geodesic rays are equivalent when they are within a bounded distance.
If $(X, d)$ is not proper, then this identification with geodesic rays does not hold since there is no Ascoli-Arzel\`a theorem.
In this case, one can only show that for every $x$ in $X$ and for every $\x$ in $\partial X$, there is a {\it quasi-geodesic ray} starting from $x$ and converging to $\x$.
Although a genuine geodesic ray is not available,
that quasi-geodesic ray can be chosen as a {\it $(1, C_\d)$-quasi-geodesic ray}, 
$\phi: [0, \infty) \to X$, 
\begin{equation}\label{qg}
|t-s|-C_\d \le d(\phi(t), \phi(s)) \le |t-s|+C_\d
\end{equation}
for all $t, s \in [0, \infty)$, where $C_\d$ is a constant independent of a pair $x$ and $\x$, depending only on $\d$ (\cite[Remark 2.16]{KB} and \cite[Section 6]{V}).

Let $(X, d)$ be a hyperbolic geodesic metric space.
We define a {\it quasi-metric}\footnote{In fact, we can define $\r$ on the entire space $X\cup \partial X$ if we allow $\r(x,x)>0$ for $x$ in $X$.}
 $\r$ in $\partial X$ by
$$
\r(x, y):=\exp(-(x|y))
$$
for $x, y$ in $\partial X$.
The quasi-metric $\r$ is non-degenerate: $\r(x, y)=0$ if and only if $x=y$, symmetric: $\r(x, y)=\r(y, x)$ and it satisfies $\r(x, y)\le C(\r(x, z)+\r(z,y))$ for a constant depending only on $\d$.
Consider the space $\partial X$ endowed with the topology induced by $\r$.
This is separable and completely metrizable (\cite[Proposition 3.4.18]{DSU} and \cite[Section 5]{V}).
The space $\partial X$ is in fact compact when $X$ is proper.
Moreover, there exists a metric $\r_\e$ in $\partial X$ with a parameter of range $0 \le \e \le \e_0$ such that 
\begin{equation}\label{quasi-metric}
C_\e^{-1}\r(x, y)^\e \le \r_\e(x, y) \le C_\e\r(x, y)^\e
\end{equation}
for a constant $C_\e$ depending only on $\e$ (\cite[10.- Proposition, Chap.\ 7]{GdlH} and \cite[Section 5]{V}).
Although it would be more natural to use the metric $\r_\e$ than the quasi-metric $\r$,
we will work with the quasi-metric $\r$ for simplicity of notation (to avoid introducing a new parameter). 
Henceforth we define balls in $\partial X$ with respect to the quasi-metric $\r$.

We define a {\it shadow} $S_w(x, R)$ with a base at $w$. For $x$ and $w$ in $X$,
let 
$$
S_w(x, R):=\left\{ \x \in \partial X \ : \ (\x|x)_w \ge d(w, x)- R \right\}.
$$
Informally, a shadow is the set of boundary points such that there exist geodesic rays from $w$ which converge to those points, passing through the ball $B(x, R)$ (with a slightly different constant $R$ by $\d$).
We often take the usual base point $o$ as the base of shadow; in that case, we drop $o$ and write $S(x, R)$.
A shadow is used to control a measure on the boundary $\partial X$.
We have the following comparison between a shadow and a ball.

\begin{lemma}[Proposition 2.1 in \cite{BHM11}]\label{shadow}
For every $\t>0$ there exist positive constants $R_0$ and $C$ such that
for every $R > R_0$, all $x$ in $X$ and all $\x$ in $\partial X$ with $(\x|o)_x \le \t$,
we have
$$
B(\x, (1/C)e^{-|x|+R}) \subset S(x, R) \subset B(\x, Ce^{-|x|+R}),
$$
where $|x|=d(o, x)$ and the ball $B(\x, r)$ is defined by the quasi-metric $\r$.
\end{lemma}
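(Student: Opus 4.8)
The plan is to reduce both inclusions to a single comparison between the two Gromov products $(\y|x)_o$ and $(\y|\x)_o$, where $\y$ denotes a generic boundary point whose membership in the shadow and in the balls we test. Unwinding the definitions, $\y \in S(x,R)$ means exactly $(\y|x)_o \ge |x| - R$, while $\y \in B(\x, r)$ means $\r(\x, \y)=e^{-(\y|\x)_o} \le r$, i.e.\ $(\y|\x)_o \ge -\log r$. Taking $r = Ce^{-|x|+R}$ and $r = (1/C)e^{-|x|+R}$ respectively, both inclusions amount to showing that $(\y|x)_o$ and $(\y|\x)_o$ differ by an additive constant controlled by $\t$ and $\d$, once $\x$ is close to $x$ in the sense of the hypothesis $(\x|o)_x \le \t$.

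First I would translate the hypothesis into a lower bound on $(\x|x)_o$. For interior points the cocycle identity $(x|y)_o + (o|y)_x = d(o,x) = |x|$ holds exactly; passing to a sequence $y \to \x$ and invoking (\ref{Gpro}) gives $(\x|x)_o \ge |x| - (\x|o)_x - 2\d \ge |x| - \t - 2\d$, so $\x$ itself sits deep in the shadow of $x$. Then both inclusions follow from the $\d$-hyperbolic inequality applied at the base point $o$ with $x$ as the middle point. For $S(x,R) \subset B(\x, Ce^{-|x|+R})$, take $\y \in S(x,R)$ and estimate
$$
(\y|\x)_o \ge \min\{(\y|x)_o, (x|\x)_o\} - O(\d) \ge \min\{|x|-R, \, |x|-\t-2\d\} - O(\d) = |x| - R - O(\d),
$$
the last equality being valid once $R \ge \t + 2\d$; exponentiating yields $\r(\x,\y) \le Ce^{-|x|+R}$ with $C = e^{O(\d)}$. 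For the reverse inclusion $B(\x, (1/C)e^{-|x|+R}) \subset S(x,R)$, take $\y$ with $(\y|\x)_o \ge |x| - R + \log C$ and apply the same inequality with the roles of $x$ and $\x$ interchanged,
$$
(\y|x)_o \ge \min\{(\y|\x)_o, (\x|x)_o\} - O(\d) \ge \min\{|x|-R+\log C, \, |x|-\t-2\d\} - O(\d),
$$
and a short case analysis shows that for $C$ large enough (so that $\log C$ dominates the accumulated $\d$-errors) and $R$ larger than a threshold $R_0$ depending on $\t$ and $\d$ (so that the first term realizes the minimum), the right-hand side is at least $|x| - R$, which is the shadow condition. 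The constants are then fixed by setting $C = e^{c\d}$ and $R_0 = \t + \log C + c'\d$ for suitable absolute $c, c'$.

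The main obstacle I anticipate is not the geometry, which is the standard ``middle point'' inequality, but the careful bookkeeping of the additive $\d$-constants introduced when the Gromov product is extended to the boundary. The extended product satisfies the $\d$-inequality only up to an enlarged constant, and the cocycle identity used in the first step holds only up to $2\d$ via (\ref{Gpro}); one must check that every such error is uniform in $x$, $\x$ and $\y$ (which it is, since $\d$ is a global constant of $X$) and that the two minima above land on the intended terms precisely when $R > R_0$. Keeping these estimates uniform is exactly what allows $C$ and $R_0$ to depend only on $\t$ and the hyperbolicity constant, as claimed.
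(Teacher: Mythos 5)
Your argument is correct: the paper itself gives no proof of this lemma (it is quoted from Proposition 2.1 of \cite{BHM11}), and your reduction of both inclusions to the hyperbolic inequality $(\y|\x)_o \ge \min\{(\y|x)_o,(x|\x)_o\}-O(\d)$, together with the exact cocycle identity $(x|y)_o+(o|y)_x=|x|$ passed to the limit to convert the hypothesis $(\x|o)_x\le\t$ into $(\x|x)_o\ge |x|-\t-O(\d)$, is precisely the standard route taken in the cited source. The bookkeeping you flag works out: all $\d$-errors are uniform, and choosing $C=e^{c\d}$ and $R_0=\t+c'\d$ makes both minima land on the intended terms.
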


Let $(X, d)$ be a hyperbolic geodesic metric space.
We consider $\G$ a countable group of isometries of $(X, d)$.
The action of $\G$ extends to $X \cup \partial X$ in a natural way, and $\G$ acts on $\partial X$ continuously.
The following lemma provides a control of deformation of balls in $\partial X$ under the $\G$-action.

\begin{lemma}\label{ball}
For every $g$ in $\G$, there exists a constant $c_g > 0$ depending on $g$ such that 
for every $\x$ in $\partial X$ and for every $r \ge 0$,
we have
$$
B(g\x, c_g^{-1}r) \subset gB(\x, r) \subset B(g\x, c_g r).
$$
\end{lemma}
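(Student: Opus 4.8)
The plan is to reduce the statement to a two-sided comparison of the quasi-metric under the action of $g$, and then deduce the ball inclusions by a purely set-theoretic argument. Concretely, I will show that there is a constant $c_g \ge 1$, depending only on $g$ (and on $\d$), such that
$$
c_g^{-1}\r(\x, \y) \le \r(g\x, g\y) \le c_g \r(\x, \y)
$$
for all $\x, \y$ in $\partial X$. Granting this, the inclusions follow at once: if $g\y \in gB(\x, r)$ then $\r(\x, \y) < r$, so $\r(g\x, g\y) \le c_g \r(\x, \y) < c_g r$, giving $gB(\x, r) \subset B(g\x, c_g r)$; conversely, since $g$ is a bijection of $\partial X$, any $\y$ with $\r(g\x, g\y) < c_g^{-1} r$ satisfies $\r(\x, \y) \le c_g \r(g\x, g\y) < r$, which yields $B(g\x, c_g^{-1} r) \subset gB(\x, r)$.

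To obtain the comparison, recall that $\r(\x, \y) = \exp(-(\x|\y))$, so everything hinges on how the Gromov product at $o$ transforms under $g$. Since $g$ acts by isometries and the Gromov product is built solely from distances, for interior points one has the exact identity $(gx|gy)_{go} = (x|y)_o$. The base point on the left is $go$ rather than $o$, but changing the base point of a Gromov product costs at most the distance between the base points: the elementary inequality $|(x|y)_w - (x|y)_{w'}| \le d(w, w')$ gives $|(gx|gy)_o - (gx|gy)_{go}| \le d(o, go)$. Combining the two, $|(gx|gy)_o - (x|y)_o| \le d(o, go)$ for all interior $x, y$.

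It remains to pass to the boundary. Given $\x, \y$ in $\partial X$, choose sequences $x_n \to \x$ and $y_n \to \y$; then $gx_n \to g\x$ and $gy_n \to g\y$. Applying the interior estimate to $x_n, y_n$ shows that the sequences $(gx_n|gy_n)_o$ and $(x_n|y_n)_o$ differ term-by-term by at most $d(o, go)$, so their $\liminf$s differ by at most $d(o, go)$; invoking (\ref{Gpro}) to control each boundary Gromov product by the corresponding interior $\liminf$ up to an additive $2\d$ then yields $|(g\x|g\y) - (\x|\y)| \le d(o, go) + 2\d$. Exponentiating gives the desired comparison with $c_g := \exp(d(o, go) + 2\d)$, which depends only on $g$ and $\d$. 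The only point requiring genuine care is this final bookkeeping of the $\d$-errors as the interior estimates are transported through the $\liminf$ and the supremum defining the boundary product; the geometric input (isometry invariance of the Gromov product together with the change-of-base-point inequality) is entirely elementary, so essentially all of the minor technical work concentrates there.
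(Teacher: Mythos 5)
Your proof is correct and follows essentially the same route as the paper: both reduce the ball inclusions to a two-sided comparison of $\r$ under $g$, obtained from the isometry invariance of the Gromov product together with the fact that changing the base point costs at most $d(o,go)$ (the paper writes this as $(gx_n|gy_n)_o=(x_n|y_n)_{g^{-1}o}\ge (x_n|y_n)_o-|g|$ and gets the reverse inclusion by applying the estimate to $g^{-1}$). Your extra additive $2\d$ from invoking (\ref{Gpro}) is harmless but avoidable, since the one-sided interior inequality passes directly through the $\liminf$ and the supremum defining the boundary Gromov product.
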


\begin{proof}
We show that 
$\r(g\x, g\y) \le e^{|g|}\r(\x, \y)$ where $|g|=d(o, go)$.
Indeed, let $\{x_n\}_{n=0}^\infty$ and $\{y_n\}_{n=0}^\infty$ be sequences converging to $\x$ and $\y$, respectively.
By the definition of Gromov product and the triangular inequality,
$$
(gx_n|gy_n)_o = (x_n|y_n)_{g^{-1}o} \ge (x_n|y_n)_o - |g|.
$$
Then this implies that
$
(g\x|g\y) \ge (\x|\y)  -|g|.
$
Hence we conclude the claim 
$gB(\x, r)\subset B(g\x, c_gr)$ 
by setting $c_g=e^{|g|}$;
the other inclusion follows by using $g^{-1}$.
\end{proof}

\subsection{Hausdorff dimension of measures}

For a subset $E$ in $\partial X$, we define the Hausdorff dimension with the quasi-metric $\r$ (gauge) instead of the metric $\r_\e$ in (\ref{quasi-metric}).
Since $\r^\e$ and $\r_\e$ are comparable, the Hausdorff dimension with $\r$ is exactly $\e$ times the Hausdorff dimension with $\r_\e$.
We shall recall the definition.
Let 
$$
|E|:=\sup\{\r(x, y) \ : \ x, y \in E\}.
$$
For every $\a \ge 0$ and $\D > 0$, we define
$$
\Hc_\D^\a(E):=\inf \left\{\sum_{i=1}^\infty|E_i|^\a \ : \ \text{$E \subset \bigcup_{i=1}^\infty E_i$ and $|E_i| \le \D$}\right\}.
$$
Then the {\it $\a$-Hausdorff measure} of a set $E$ is
$$
\Hc^\a(E):=\sup_{\D>0}\Hc_\D^\a(E)=\lim_{\D \to 0}\Hc_\D^\a(E).
$$
The {\it Hausdorff dimension} of the set $E$ is
$$
\dim E:=\inf\{\a \ge 0 \ : \ \Hc^\a(E)=0\}=\sup\{\a \ge 0 \ : \ \Hc^\a(E)>0\}.
$$
Let us define the dimension of a measure on $\partial X$.
\begin{definition}
Let $\n$ be a Borel measure on $\partial X$.
The {\it Hausdorff dimension} of $\n$ is defined by
$$
\dim \n:=\inf\{\dim E \ : \ \n(E^{\sf c})=0\},
$$
where $E^{\sf c}$ denotes the complement of the set $E$, i.e., the smallest 
Hausdorff dimension of the support of $\n$.
\end{definition}

We consider a finer dimensional property of measure, the pointwise dimension, which concerns the decreasing rate of $\n(B(\x, r))$ at each point $\x$ in $\partial X$.
The following is used to estimate the dimension of $\n$, and we call it the {\it Frostman-type lemma}.

\begin{lemma}[Frostman-type lemma; e.g.\ \cite{H}, Sect. 8.7]\label{Frostman}
For every finite Borel measure $\n$ on $\partial X$, if there exist $\d_1\ge 0$ and $\d_2 \ge 0$ such that
$$
\d_1 \le \liminf_{r \to 0}\frac{\log \n\left(B(\x, r)\right)}{\log r} \le \d_2 \ \ \text{for $\n$-almost every $\x$},
$$
then 
$
\d_1 \le \dim \n \le \d_2.
$
\end{lemma}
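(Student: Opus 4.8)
The plan is to establish the two inequalities $\d_1 \le \dim \n$ and $\dim \n \le \d_2$ separately by the classical mass-transference method: the lower bound by a mass distribution principle and the upper bound by a Vitali-type covering estimate. For the covering arguments I would pass to the genuine metric $\r_\e$ of (\ref{quasi-metric}), which is comparable to $\r^\e$; since the Hausdorff dimension computed with $\r$ equals $\e$ times the one computed with $\r_\e$ while the corresponding local-dimension exponents rescale by $1/\e$, the hypothesis and the conclusion transform consistently and no generality is lost. I will nevertheless phrase the estimates in terms of the gauge $\r$ for concreteness, invoking the metric $\r_\e$ only where a covering lemma is needed.

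For $\d_1 \le \dim \n$, fix $\eta > 0$. The hypothesis $\liminf_{r \to 0}\log \n(B(\x,r))/\log r \ge \d_1$ gives, for $\n$-a.e.\ $\x$, a scale $r_0(\x) > 0$ below which $\n(B(\x,r)) \le r^{\d_1-\eta}$. The only nuisance is the point-dependence of $r_0(\x)$, which I would neutralise by stratifying: put $A_k := \{\x : \n(B(\x,r)) \le r^{\d_1-\eta}\ \text{for all}\ r < 1/k\}$, observe that $\bigcup_k A_k$ has full measure, and fix $k$ with $\n(A_k) > 0$. Let $E$ be any set with $\n(E^{\sf c}) = 0$ and set $F := E \cap A_k$, so $\n(F) = \n(A_k) > 0$. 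For any cover $\{E_i\}$ of $F$ by sets with $|E_i| \le \D < 1/k$, choose $\x_i \in E_i \cap A_k$; since $E_i$ is contained in the ball of radius $|E_i|$ about $\x_i$ directly from the definition of the diameter $|E_i|$, we get $\n(E_i) \le |E_i|^{\d_1-\eta}$, and summation yields $\sum_i |E_i|^{\d_1-\eta} \ge \n(F) > 0$. Thus $\Hc^{\d_1-\eta}_\D(F) \ge \n(F)$ for every $\D < 1/k$, so $\Hc^{\d_1-\eta}(F) > 0$ and $\dim E \ge \dim F \ge \d_1 - \eta$; letting $\eta \to 0$ and taking the infimum over admissible $E$ gives $\dim \n \ge \d_1$.

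For $\dim \n \le \d_2$, let $A := \{\x : \liminf_{r\to 0}\log \n(B(\x,r))/\log r \le \d_2\}$, a set of full measure. Fix $\eta > 0$: for each $\x \in A$ there are arbitrarily small radii $r$ with $\n(B(\x,r)) \ge r^{\d_2+\eta}$, and such balls form a fine (Vitali) cover of $A$. Applying the $5r$-covering lemma in the metric $\r_\e$, I would extract a countable disjoint subfamily $\{B(\x_i, r_i)\}$ with $r_i < \D$ whose dilates $\{B(\x_i, 5 r_i)\}$ still cover $A$. Disjointness and finiteness of $\n$ then give
$$
\sum_i r_i^{\d_2+\eta} \le \sum_i \n\bigl(B(\x_i, r_i)\bigr) \le \n(\partial X) < \infty,
$$
and since the covering balls have diameters comparable to $r_i$, this bounds $\Hc^{\d_2+\eta}_{C\D}(A) \le C' \n(\partial X)$ uniformly in $\D$. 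Letting $\D \to 0$ shows $\Hc^{\d_2+\eta}(A) < \infty$, so $\dim A \le \d_2 + \eta$; as $\eta$ was arbitrary, $\dim A \le \d_2$, and therefore $\dim \n \le \dim A \le \d_2$.

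I expect the main obstacle to be the covering step in the upper bound: one must select, at a uniformly small scale, a \emph{disjoint} family of balls on which the mass grows from below at the rate $r^{\d_2+\eta}$, which is exactly a Vitali covering situation and is the only place where the genuine metric structure (beyond the bare quasi-metric $\r$) is used. The lower bound is, by comparison, a routine mass distribution argument once the point-dependent scale $r_0(\x)$ has been removed by the stratification into the sets $A_k$.
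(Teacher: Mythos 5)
Your proof is correct and is exactly the standard argument (mass distribution principle for the lower bound, a fine Vitali/$5r$-covering for the upper bound) that the paper does not reproduce but delegates to the cited references (\cite{H}, Sect.\ 8.7, and \cite{MSU}). Your two points of care --- removing the point-dependent scale by stratifying into the sets $A_k$, and passing to the genuine metric $\r_\e$ of (\ref{quasi-metric}) so that the covering lemma applies (the only remaining technicalities being the harmless replacement of $B(\x_i,|E_i|)$ by a ball of comparable radius and the countability of the disjoint subfamily, which follows from separability or from each selected ball carrying measure at least $r_i^{\d_2+\eta}>0$) --- are precisely the right ones.
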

Notice that this Frostman-type lemma also holds for $\partial X$ not necessarily compact; in fact, for every metric space with a finite Borel measure \cite[Corollary 8$\cdot$1]{MSU}.
Lemma \ref{Frostman} implies the following useful characterization for the Hausdorff dimension of $\n$:
\begin{equation}\label{var}
\dim \n=\text{$\n$-$\sup_{\x}$} \liminf_{r \to 0}\frac{\log \n\left(B(\x, r)\right)}{\log r},
\end{equation}
where $\n$-$\sup$ denotes the essential supremum with respect to $\n$.
In other words, $\dim \n$ is the largest possible lower local dimension of $\n$.
We are interested in the case when the above $\liminf$ is a genuine limit and is a constant.

\begin{definition}
We say that a finite Borel measure $\n$ on $\partial X$ is {\it exact dimensional} when the limit
$$
\lim_{r \to 0}\frac{\log \n\left(B(\x, r)\right)}{\log r}
$$
exists $\n$-almost everywhere and it is a constant.
\end{definition}

\subsection{Random walks}

Let $\m$ be a probability measure on $\G$.
Consider an independent sequence of group elements $\{x_n\}_{n=1}^{\infty}$ with common distribution $\m$,
and define $w_n:=x_1 \cdots x_n$ and $w_0=id$.
We denote by $(\O, \Fc, \Pr)$ the probability space where the random walk $\{w_n\}_{n=0}^\infty$ is defined; 
$\O=\G^\N$ is the space of sample paths, $\Fc$ is the Borel $\s$-algebra generated by $\{w_n\}_{n=0}^\infty$ and $\Pr$ is the distribution of sample path starting from $id$, i.e., the image of the product measure $\m^\N$ under the map: $\{x_n\}_{n=1}^\infty \mapsto \{w_n\}_{n=0}^\infty$.

Fix a base point $o$ in $X$, we consider the random trajectory $\{w_no\}_{n=0}^\infty$ in $X$.
Let us consider first the case when the random trajectory converges to a point in the boundary $\partial X$ with the compactified topology in $X\cup \partial X$.
If the random trajectory $\{w_n o\}_{n=0}^\infty$ converges to a (random) point $\x$ in the boundary $\partial X$ for $\Pr$-almost every sample,
then we denote by $\n_\m$ the distribution of $\x$ on $\partial X$. 
We call the distribution $\n_\m$ the {\it harmonic measure}.
More precisely, let us denote by 
$$
\pi : \O \to \partial X
$$
the map defined by
$\o \mapsto \x_\o$, i.e.,
assigning the limiting point in the boundary for each sample (where we write $\o$ for a sample);
then the harmonic measure $\n_\m$ is the pushforward of $\Pr$ by $\pi$.
In general, we write the harmonic measure $\n_{\m, o}$ by referring the starting point $o$, and treat it as a {family} of measures $\{\n_{\m, x}\}_{x \in X}$ parametrized by the starting points.
Once the harmonic measure is defined in this way, then it is {\it stationary}, i.e.,
\begin{equation}\label{stationary}
\n_\m=\sum_{g \in \G}\m(g)g\n_\m,
\end{equation}
where $g\n_\m$ denotes the image of $\n_\m$ by the action of $g$.
(Here $g\n_{\m}$ is written as $\n_{\m, go}$ when $\n_\m=\n_{\m, o}$.)
Furthermore the harmonic measure is {\it ergodic} in the sense that every $\G$-invariant set has the measure $0$ or $1$; more precisely:

\begin{theorem}\label{ergodic}
Assume that the harmonic measures are defined for $\m$ and its reflected measure $\check \m$, i.e., $\check \m(g)=\m(g^{-1})$.
Then the product measure $\n_\m \otimes \n_{\check \m}$ is ergodic under the diagonal action of $\G$ on $\partial X \times \partial X$.
In particular, the harmonic measure $\n_\m$ is ergodic under the $\G$-action on $\partial X$, i.e.,
for every Borel set $A$ in $\partial X$, if $A$ is $\G$-invariant, then
$\n_\m(A)=0$ or $1$. 
\end{theorem}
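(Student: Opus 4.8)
The plan is to realize the measure $\n_\m \otimes \n_{\check\m}$ on $\partial X \times \partial X$ as the image of a shift-ergodic system, so that ergodicity of the diagonal $\G$-action becomes a formal consequence of ergodicity of a Bernoulli shift. First I would pass from the one-sided path space to the two-sided one. Let $\tilde\O := \G^\Z$ be equipped with the product measure $\tilde\Pr := \m^{\otimes\Z}$, and let $T$ denote the shift $(T\o)_n := \o_{n+1}$. Writing $\o = (x_n)_{n\in\Z}$ for the increments, set $w_0 := id$, $w_n := x_1\cdots x_n$ for $n \ge 1$, and $w_{-n} := x_0^{-1}x_{-1}^{-1}\cdots x_{-n+1}^{-1}$ for $n \ge 1$, so that $x_{n+1} = w_n^{-1}w_{n+1}$ for every $n \in \Z$. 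The forward trajectory $\{w_n o\}_{n\ge 0}$ is then a $\m$-random walk, while the backward trajectory $\{w_{-n} o\}_{n\ge 0}$ is a $\check\m$-random walk; under the standing hypothesis that the harmonic measures of $\m$ and of $\check\m$ are defined, both converge $\tilde\Pr$-almost surely, yielding a measurable map $\Theta: \tilde\O \to \partial X \times \partial X$, $\o \mapsto (\x_+(\o), \x_-(\o))$, where $\x_\pm$ are the forward and backward limits.

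Next I would identify $\Theta_*\tilde\Pr$ with the product measure. The forward limit $\x_+$ is a function of $(x_n)_{n\ge 1}$ alone and the backward limit $\x_-$ of $(x_n)_{n\le 0}$ alone; since these two blocks of increments are independent and carry marginal laws $\n_\m$ and $\n_{\check\m}$ respectively, the limits are independent and $\Theta_*\tilde\Pr = \n_\m \otimes \n_{\check\m}$. The decisive structural point is an intertwining relation: shifting the increments replaces $w_n$ by $x_1^{-1}w_{n+1}$ for every $n$, so passing to limits gives $\x_\pm(T\o) = x_1(\o)^{-1}\x_\pm(\o)$, that is,
\[
\Theta(T\o) = x_1(\o)^{-1}\cdot\Theta(\o),
\]
where $\G$ acts diagonally on $\partial X \times \partial X$.

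With these pieces in place the conclusion follows formally. Suppose $A \subset \partial X \times \partial X$ is diagonally $\G$-invariant and put $f := \mathbf{1}_A$. By the intertwining relation and $\G$-invariance of $f$ we get $f(\Theta(T\o)) = f(x_1(\o)^{-1}\cdot\Theta(\o)) = f(\Theta(\o))$, so $f \circ \Theta$ is $T$-invariant. Since $(\tilde\O, \tilde\Pr, T)$ is a Bernoulli shift, hence ergodic, $f\circ\Theta$ is $\tilde\Pr$-almost everywhere constant, whence $(\n_\m\otimes\n_{\check\m})(A) = \Theta_*\tilde\Pr(A) \in \{0,1\}$. For the ``in particular'' clause, given a $\G$-invariant Borel set $A \subset \partial X$, the set $A \times \partial X$ is diagonally $\G$-invariant and $(\n_\m\otimes\n_{\check\m})(A\times\partial X) = \n_\m(A)$, so $\n_\m(A)\in\{0,1\}$.

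The routine inputs are that both walks converge (guaranteed by the hypothesis) and that the two-sided Bernoulli shift is ergodic (classical). I expect the only genuinely delicate point to be the clean identification of $\Theta_*\tilde\Pr$ with the product measure: one must verify that $\Theta$ is defined on a $T$-invariant conull set, set up the measurability and independence of $\x_+$ and $\x_-$ as functions of disjoint increment blocks rigorously, and confirm that the backward limit law is genuinely $\n_{\check\m}$ rather than merely \emph{some} $\check\m$-stationary measure. Here uniqueness of the stationary measure, available because $\gr(\m) = \gr(\check\m)$ is non-elementary, closes the last gap.
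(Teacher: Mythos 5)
Your proposal is correct and takes essentially the same route as the paper: both realize $\n_\m\otimes\n_{\check\m}$ as the image of the two-sided Bernoulli system under the map sending a bilateral path to its forward and backward boundary limits, and both exploit the intertwining of the increment shift with the diagonal $\G$-action (the paper writes the shift on path space as $T(\{w_n\})=\{w_1^{-1}w_{n+1}\}$ and argues by contradiction rather than via invariance of $\mathbf{1}_A\circ\Theta$, but this is the identical argument). Your closing remark about uniqueness of the stationary measure is harmless but not needed, since $\n_{\check\m}$ is by definition the hitting distribution of the $\check\m$-walk, which is exactly what the backward trajectory is.
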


\begin{proof}
The proof is given in \cite[Theorem 6.3]{K00}; here we reproduce it for the sake of convenience.
We consider bilateral random walks. 
Define the bilateral path $\{w_n\}_{n \in \Z}$ corresponding to the bi-infinite sequence $\{x_n\}_{n \in \Z}$ of independent identically distributed increments with the law $\m$ by
\begin{equation*}
w_n=
\begin{cases}
x_1 \cdots x_n	\ &\text{if $n \ge 1$}\\
id 				\ &\text{if $n=0$}\\
x_0^{-1}\cdots x_{n+1}^{-1} \ &\text{if $n \le -1$}.
\end{cases}
\end{equation*}
In other words, in negative indices, we consider the random walk with reflected step distribution $\check \m$ starting from $id$. 
We call the random walk in positive indices the {\it forward} random walk and the one in negative indices the {\it backward} one.
We denote by $(\overline\O, \overline\Pr)$ the space of bilateral path with the distribution given by pushforward of $\m^\Z$ via the map $\{x_n\}_{n \in \Z} \mapsto \{w_n\}_{n \in \Z}$.
Let us define the shift $T$ by
$T(\{w_n\}_{n \in \Z})=\{w_1^{-1}w_{n+1}\}_{n \in \Z}$. 
Note that the shift $T$ is induced by the bilateral Bernoulli shift in the space of increments, and is measure preserving and ergodic.
Let us consider the map $\overline{\pi} : \overline \O \to \partial X \times \partial X$ defined by $\overline \o \mapsto (\x, \check \x)$ for $\overline \Pr$-almost every sample $\overline \o$ where $\x$ is the limiting point of forward random walk and $\check \x$ is the one of backward random walk.
Define the diagonal action of $\G$ on $\partial X \times \partial X$.
Then the product measure $\n_\m \otimes \n_{\check \m}$ on $\partial X \times \partial X$ is ergodic under the diagonal action of $\G$; 
indeed, if there exists a $\G$-invariant Borel set $A$ such that $0<\n_\m \otimes \n_{\check \m}(A)<1$, then $\overline{\pi}^{-1}(A)$ is the shift $T$-invariant in $\overline \O$, and since $(\overline \O, \overline \Pr, T)$ is ergodic, this implies that $\Pr(\overline{\pi}^{-1}(A))=0$ or $1$; a contradiction. 
To see the ergodicity of a single $\n_\m$, take $A \times \partial X$ for a $\G$-invariant $A$ in $\partial X$.
\end{proof}

Let us consider then when the harmonic measure is defined on $\partial X$, or when the map $\pi: \O \to \partial X$ is defined in such a way.
Recall that a group $\G$ is {\it non-elementary} if the group acts on $X$ by isometries and the orbit of a (or, equivalently every) point has infinitely many limit points in the boundary $\partial X$.
This is equivalent to say that
it contains at least two hyperbolic elements with disjoints fixed points in the boundary $\partial X$, 
and also that $\G$ acts on $\partial X$ without any fixed points \cite[8.2]{Gro}. 
Recall that $\rm gr(\m)$ (resp.\ $\rm sgr(\m)$) is the group (resp.\ the semigroup) generated by the support of $\m$.
They naturally coincide when $\m$ is symmetric, i.e., $\m(g)=\m(g^{-1})$, or when 
the semigroup generated by the support of $\m$ is a group;
but in general, we assume neither the one nor the other.
The following is a special case of a theorem in \cite{K00}.

\begin{theorem}[\cite{K00}, Theorem 2.4 and Sect.\ 7]\label{Kconvergence}
Let $(X, d)$ be a hyperbolic proper geodesic space, and $\G$ be a countable group of isometries acting on $X$.
For every probability measure $\m$ on $\G$, if the group $\rm gr(\m)$ generated by its support is non-elementary,
then the random trajectory $\{w_n o\}_{n=0}^\infty$ converges to a (random) point $\x$ in the boundary $\partial X$ almost surely in the compactified topology in $X\cup \partial X$.
\end{theorem}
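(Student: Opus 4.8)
The plan is to exploit the compactness of the boundary. Since $X$ is proper, $\partial X$ is compact, and hence the space $\Mcc$ of Borel probability measures on $\partial X$ is compact and metrizable in the weak${}^*$ topology. I would first produce a $\m$-stationary measure on $\partial X$ as an auxiliary object. Fixing any $\l \in \Mcc$, the Ces\`aro averages $\frac1n\sum_{k=0}^{n-1}\m^{\ast k}\ast \l$ lie in the compact set $\Mcc$, so a subsequence converges weak${}^*$ to some $\n$; continuity of the affine map $\kappa \mapsto \m \ast \kappa$ then gives $\m\ast\n=\n$, i.e.\ $\n$ is stationary. That $\n$ is non-atomic is a standard consequence of non-elementarity of $\gr(\m)$ (no finite orbit on $\partial X$ can carry an atom of a stationary measure), and I would record this for later use.

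The heart of the argument is a martingale. Set $Z_n:=(w_n)_\ast \n \in \Mcc$ and let $\Fc_n$ be the $\s$-algebra generated by $x_1,\dots,x_n$. Using the independence of the increments together with stationarity,
$$
\E[Z_{n+1}\mid \Fc_n]=(w_n)_\ast\Big(\sum_{g\in\G}\m(g)\,g_\ast\n\Big)=(w_n)_\ast(\m\ast\n)=(w_n)_\ast\n=Z_n,
$$
so $(Z_n)$ is an $\Mcc$-valued martingale. For each $f\in C(\partial X)$ the bounded real martingale $\int f\,dZ_n$ converges almost surely; choosing a countable dense family of such $f$ (possible since $C(\partial X)$ is separable) I conclude that $Z_n$ converges weak${}^*$ almost surely to a random limit measure $Z_\infty^\o$.

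It then remains to show that $Z_\infty^\o$ is almost surely a single Dirac mass $\d_{\x(\o)}$ and that $w_n o \to \x(\o)$ in $X\cup\partial X$; this is where the hyperbolic geometry enters and is the main obstacle. The mechanism is the contraction (north--south) property of the boundary action: if an isometry $g$ moves $o$ far out toward a direction $\y$, then, away from a small neighbourhood of the repelling direction $g^{-1}o$, the map $g$ compresses $\partial X$ into a small ball about $\y$; quantitatively this follows from the Gromov-product estimate $(g\y'|go)_o \gtrsim |g|-(\y'|g^{-1}o)_o$ together with the shadow/ball comparison of Lemma \ref{shadow}. To feed this in I would first establish transience, $d(o,w_no)\to\infty$ almost surely; this is a standard consequence of non-elementarity, since $\gr(\m)$ admits two independent hyperbolic isometries and a ping-pong argument forces the walk to leave every bounded set. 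Granting transience, the increments drive $w_n$ deeper into the space along a consistent direction, the contraction pushes $Z_n=(w_n)_\ast\n$ onto an ever-smaller neighbourhood of that direction, and the attracting point is forced to coincide with the limit of $w_n o$; hence $Z_\infty^\o=\d_{\x(\o)}$ and $w_n o\to\x(\o)$.

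The delicate point, and the step I expect to require the most care, is controlling the repelling direction $w_n^{-1}o$ so that $\n$ (being non-atomic) assigns negligible mass to its shrinking neighbourhood, thereby guaranteeing that the contraction genuinely collapses $Z_n$ to a point rather than merely to a smaller closed set. This is exactly the role played by Kaimanovich's analysis of the backward walk, and it is where one must combine the geometry (slim triangles and the Gromov-product inequality $(\x|\y)-2\d\le\liminf_n(x_n|y_n)\le(\x|\y)$) with the probabilistic input that the two one-sided walks behave independently enough for the repelling directions not to cluster where $\n$ is supported.
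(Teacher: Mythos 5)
First, a point of comparison: the paper does not prove this statement. Theorem \ref{Kconvergence} is imported verbatim from Kaimanovich \cite{K00} (Theorem 2.4 and Section 7), so there is no internal proof to measure you against; the relevant benchmark is the argument in the cited source, whose architecture your outline does reconstruct. Your first three steps --- Krylov--Bogolyubov on the compact space $\Mcc(\partial X)$ to produce a stationary $\n$, non-atomicity from non-elementarity of $\gr(\m)$, and the almost sure weak${}^*$ convergence of the measure-valued martingale $Z_n=(w_n)_\ast\n$ --- are correct and are exactly the standard opening.

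The genuine gap is in your endgame, and it is a gap of logical order rather than of detail. You propose to \emph{first} establish $d(o,w_no)\to\infty$ almost surely ``by ping-pong'' and \emph{then} feed this into the north--south contraction. But $\G$ is only assumed to be a countable group of isometries: the action is not assumed proper or discrete (the paper even raises dense subgroups of isometries in its final section). Non-elementarity of $\gr(\m)$ does give two independent loxodromic elements and a free subsemigroup, but the random walk does not track that subsemigroup, and transience of the walk \emph{on the group} does not imply $d(o,w_no)\to\infty$ \emph{in} $X$ when the orbit map is not proper. In the actual proof the implication runs the other way: one first shows that $Z_\infty$ is almost surely a Dirac mass $\d_\x$, using stationarity together with the separation and contraction properties of the compactification $X\cup\partial X$ (conditions (CS) and (CP) in \cite{K00}) --- this is precisely the ``delicate point'' you deferred --- and only then deduces escape to infinity: if $d(o,w_{n_k}o)\le M$ along a subsequence, then each $w_{n_k}$ distorts Gromov products at $o$ by at most $M$, hence is $e^{M}$-bi-Lipschitz on $(\partial X,\r)$, so $Z_{n_k}(B(\x,r))\le\sup_\y\n(B(\y,e^{M}r))$, a modulus tending to $0$ with $r$ because $\n$ is non-atomic and $\partial X$ compact; the subsequential limit of $Z_{n_k}$ would then be non-atomic, contradicting $Z_\infty=\d_\x$. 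Thus $d(o,w_no)\to\infty$, and the contraction estimate identifies $\lim_n w_no$ with $\x$. As written, your proposal assumes at the outset the very escape-to-infinity statement that the deferred machinery is needed to prove.
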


In Theorem \ref{Kconvergence}, if we impose a stronger assumption, the random trajectory $\{w_n o\}_{n=0}^\infty$ goes almost along a geodesic ray, which we call the {\it geodesic tracking property}.
Assume that $\G$ has exponential growth relative to $X$.
If the group $\gr(\m)$ is non-elementary and $\m$ has finite first moment, the trajectory tracks a geodesic in a sublinear deviation.

\begin{theorem}[\cite{K00}, Sect.\ 7]\label{tracking}
Let $(X, d)$ be as in Theorem \ref{Kconvergence}.
Let $\m$ be a probability measure such that the group $\rm gr(\m)$ generated by its support is non-elementary and has finite first moment.
If $\G$ has exponential growth relative to $X$,
then the drift $l_\m$ is positive, and 
for almost every sample path, 
there exists a unit speed geodesic ray $\x$ such that
$$
\lim_{n \to \infty}\frac{1}{n}d(w_no, \x(l_\m n))=0.
$$
\end{theorem}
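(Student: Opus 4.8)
The plan is to reduce the statement to a \emph{radial speed} estimate for the Gromov product, prove that estimate by feeding the boundary convergence already in hand into the additive ergodic theorem, and then convert it into geodesic tracking by an elementary $\d$-hyperbolic computation. First I would record the two inputs that are essentially free. Applying Kingman's subadditive ergodic theorem to $a_n:=d(o,w_n o)$ --- subadditive because $d(o,w_{n+m}o)\le d(o,w_n o)+d(w_n o,w_{n+m}o)$ with $d(w_n o,w_{n+m}o)=d(o,(x_{n+1}\cdots x_{n+m})o)$ distributed as $a_m$ and independent of $w_n$ --- shows that $a_n/n\to l_\m$ almost surely, a limit that is finite because $\m$ has finite first moment and positive because $\gr(\m)$ is non-elementary and $\G$ has exponential growth relative to $X$ \cite{K00}. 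By Theorem \ref{Kconvergence} the trajectory converges to a random $\x\in\partial X$, and since $X$ is proper there is a unit speed geodesic ray, still denoted $\x$, from $o$ to $\x$. Writing $r_n:=d(o,w_n o)-(w_n o\,|\,\x)_o=(o\,|\,\x)_{w_n o}$, a standard comparison in $\d$-hyperbolic spaces gives $|r_n-d(w_n o,\x([0,\infty)))|\le C_\d$, so the whole theorem reduces to the claim that $(w_n o\,|\,\x)_o=l_\m n+o(n)$ almost surely.

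The heart of the matter is this radial estimate, for which I would use the horofunction (Busemann) cocycle. The upper bound $(w_n o\,|\,\x)_o\le d(o,w_n o)=l_\m n+o(n)$ is immediate. For the lower bound, pass to the bilateral stationary system $(\overline\O,\overline\Pr,T)$ of Theorem \ref{ergodic}, on which the forward limit $\x$ is defined, and consider $\b_\x(z)=\lim_{y\to\x}(d(z,y)-d(o,y))$, so that $(w_n o\,|\,\x)_o=\tfrac12\bigl(d(o,w_n o)-\b_\x(w_n o)\bigr)$ up to an additive $O(\d)$. Using the equivariance $\b_{g^{-1}\eta}(g^{-1}z)=\b_\eta(z)-\b_\eta(g o)$ one checks that the one-step increments $\b_\x(w_{j+1}o)-\b_\x(w_j o)$ form a stationary ergodic sequence for $T$ whose absolute value is bounded by $d(o,x_{j+1}o)$, hence integrable by the finite first moment hypothesis; Birkhoff's theorem then yields $\b_\x(w_n o)/n\to c$ for a constant $c$ with $|c|\le l_\m$. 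Establishing the matching value $c=-l_\m$ --- equivalently, that the walk descends the horofunction toward $\x$ at the \emph{full} drift speed --- is the crux; this is the Karlsson--Ledrappier identity (drift equals the integral of the Busemann cocycle against the harmonic measure), and it is where the positivity of $l_\m$ and the fact that $\x$ is the actual limit direction are used. I would prove it by a Kingman argument that selects along the trajectory a horofunction realising the full drift and identifies it with $\b_\x$ by uniqueness of the limit; an equivalent formulation is the uniform lower bound $(w_m o\,|\,w_n o)_o\ge l_\m m-\e n$ for all $m\le n$.

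Granting the radial estimate, tracking follows deterministically: with $r_n=o(n)$ the nearest point $\x(t_n)$ on the ray to $w_n o$ satisfies $|t_n-d(o,w_n o)|\le d(w_n o,\x(t_n))=o(n)$, so $t_n=l_\m n+o(n)$ and hence $d(w_n o,\x(l_\m n))\le d(w_n o,\x(t_n))+|t_n-l_\m n|=o(n)$. The main obstacle is precisely the lower bound in the radial estimate, that is, pinning down the cocycle mean $c$: the almost sure convergence of the drift at each fixed pair of indices does \emph{not} by itself control the Gromov product $(w_m o\,|\,w_n o)_o$ uniformly as $n\to\infty$ with $m$ fixed, because for fixed $m$ this product merely recomputes $(w_m o\,|\,\x)_o$. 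One therefore genuinely needs either the subadditive selection of a favourite horofunction or a maximal-type argument over all future times, and this is the step that actually consumes the finite first moment assumption.
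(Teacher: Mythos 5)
Your architecture is sound and your reduction is the right one, but you take a genuinely different and substantially heavier route than the paper to the key estimate $(w_no\,|\,\x)_o=l_\m n+o(n)$. The paper's proof (given only as a sketch) obtains positivity of $l_\m$ from non-triviality of the $\m$-boundary together with exponential growth, and then simply computes the \emph{consecutive} Gromov products:
$$
(w_no\,|\,w_{n+1}o)_o=\tfrac12\bigl(d(o,w_no)+d(o,w_{n+1}o)-d(o,x_{n+1}o)\bigr)=l_\m n+o(n)
$$
almost surely, since $d(o,w_no)/n\to l_\m$ by Kingman and $d(o,x_{n+1}o)=o(n)$ by Borel--Cantelli under the first moment hypothesis. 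The obstacle you single out --- controlling $(w_mo\,|\,w_no)_o$ uniformly over all future times $n$ --- then dissolves by elementary chaining rather than by any maximal or ergodic-selection argument: passing to the comparable genuine metric $\r_\e$ of (\ref{quasi-metric}), the triangle inequality gives $\r_\e(w_mo,w_no)\le\sum_{k=m}^{n-1}\r_\e(w_ko,w_{k+1}o)\le C\sum_{k\ge m}e^{-\e(l_\m-\e')k}$, a geometric series, so the trajectory is Cauchy, converges to $\x$, and $(w_mo\,|\,\x)_o\ge(l_\m-\e'')m-C'$ for all large $m$; your concluding deterministic step then coincides with the paper's.

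Your Busemann-cocycle route can be made to work, but one step as written needs repair: in a general $\d$-hyperbolic space the Busemann function $\b_\x$ of a Gromov boundary point is only defined up to an additive $O(\d)$, and the cocycle identity you invoke holds only up to an $O(\d)$ error per step. Birkhoff applied to the exactly stationary comparison sequence therefore pins down $\b_\x(w_no)/n$ only to within an $O(\d)$ window, which after unwinding yields tracking at scale $O(\d)\,n$ rather than $o(n)$. To get the exact constant you must do what your fallback sentence gestures at: run the Karlsson--Ledrappier selection in the horofunction compactification, where the selected horofunction $h_\o$ is a genuine pointwise limit with $-h_\o(w_no)=l_\m n+o(n)$ exactly, show that its defining sequence converges to a Gromov boundary point which must equal $\x$, and recover $(w_no\,|\,\x)_o\ge l_\m n-o(n)$ from $d(o,y)-d(z,y)=2(z|y)_o-d(o,z)$. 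That is correct, and it buys generality (hyperbolicity enters only in the final identification), but it costs far more than the paper's two-line computation.
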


\begin{proof}[A sketch of proof]
If $\gr(\m)$ is non-elementary, then the boundary $\partial X$ endowed with the harmonic measures $\{\n_{\m, x}\}_{x \in X}$ is a non-trivial $\m$-boundary.
When $\m$ has finite first moment and $\G$ has exponential growth relative to $X$, this implies that the corresponding Poisson boundary is non-trivial and $l_\m>0$.
Then for every $\e>0$, we have $(w_no|w_{n+1}o)_o \ge (l_\m -\e)n$ for all large enough $n$, and $\{w_no\}_{n=0}^\infty$ is a Cauchy sequence with respect to the quasi-metric $\r$.
Hence there exists a point $\x$ to which $\{w_no\}_{n=0}^\infty$ converges, and a geodesic emanating from $o$ towards $\x$ satisfies the claim.
\end{proof}

Concerning the entropy, we use the Shannon theorem for random walks due to Kaimanovich and Vershik \cite{KV}, and Derriennic \cite{Derriennic} (who attributes an observation due to J.P.\ Conze).

\begin{theorem}[\cite{KV}, Theorem 2.1, and \cite{Derriennic}, Sect.\ IV]\label{Shannon}
For every countable group $\G$, and every probability measure $\m$ on $\G$ with $H(\m)<\infty$, 
we have
$$
\lim_{n \to \infty}-\frac{1}{n}\log \m^{\ast n}(w_n) = h_\m
$$
for $\Pr$-almost every sample.
\end{theorem}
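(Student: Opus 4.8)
The plan is to realize the sample entropy $-\log \m^{\ast n}(w_n)$ as a subadditive cocycle and invoke Kingman's subadditive ergodic theorem. I would work on the space of increments $(\G^\N, \m^\N)$ carrying the Bernoulli shift $T$, defined by $T(\{x_k\}_{k \ge 1}) = \{x_{k+1}\}_{k \ge 1}$, which is measure preserving and ergodic; the sample path is recovered through $w_n = x_1 \cdots x_n$. Set $f_n := -\log \m^{\ast n}(w_n)$, a nonnegative measurable function, and aim to show $f_n/n \to h_\m$ almost surely.

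First I would establish the subadditivity. Using the convolution identity $\m^{\ast(m+n)}(g) = \sum_{h \in \G} \m^{\ast m}(h)\,\m^{\ast n}(h^{-1}g)$ and retaining only the term $h = w_m$, every summand being nonnegative, one obtains
$$
\m^{\ast(m+n)}(w_{m+n}) \ge \m^{\ast m}(w_m)\,\m^{\ast n}(w_m^{-1}w_{m+n}).
$$
Since $w_m^{-1}w_{m+n} = x_{m+1}\cdots x_{m+n}$, evaluating $\m^{\ast n}$ at this block is exactly $f_n$ read off the shifted sequence, so $-\log \m^{\ast n}(w_m^{-1}w_{m+n}) = f_n \circ T^m$. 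Taking $-\log$ of the displayed inequality therefore yields the cocycle relation
$$
f_{m+n} \le f_m + f_n \circ T^m.
$$

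Next I would verify integrability: as $\m(w_1) \le 1$ we have $f_1 \ge 0$, and $\E[f_1] = -\sum_{g \in \G} \m(g)\log\m(g) = H(\m) < \infty$ by hypothesis, so $f_1^+$ is integrable. Kingman's subadditive ergodic theorem then gives that $f_n/n$ converges $\Pr$-almost surely and in $L^1$ to a $T$-invariant limit; by ergodicity of $T$ this limit is the constant $\inf_n \tfrac1n \E[f_n] = \inf_n \tfrac1n H(\m^{\ast n})$. Finally, by subadditivity of $n \mapsto H(\m^{\ast n})$ together with Fekete's lemma, this infimum coincides with $\lim_n \tfrac1n H(\m^{\ast n}) = h_\m$, which is the desired conclusion.

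The only genuinely delicate point is setting up the subadditive cocycle and matching the shift so that $f_n \circ T^m$ records precisely the increment block $x_{m+1}\cdots x_{m+n}$; once this is in place, identifying the almost-sure limit with $h_\m$ follows immediately from the $L^1$ assertion of Kingman's theorem and the ergodicity of the Bernoulli shift, and the integrability requirement is exactly what forces the standing assumption $H(\m) < \infty$.
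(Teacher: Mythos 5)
Your proof is correct, and it is essentially the argument the paper points to: the paper does not reprove this statement but cites Derriennic (Sect.\ IV, attributing the observation to J.-P.\ Conze), whose proof is exactly this realization of $-\log\m^{\ast n}(w_n)$ as a subadditive cocycle over the Bernoulli shift on increments, followed by Kingman's theorem and the identification of the constant limit with $\inf_n \frac1n H(\m^{\ast n})=h_\m$. All the delicate points (the convolution inequality giving $f_{m+n}\le f_m+f_n\circ T^m$, the integrability $\E[f_1]=H(\m)<\infty$, and ergodicity of the shift) are handled as in the cited source.
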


\section{Dimension formula: Proof of main results}\label{main}

We write the harmonic measure $\n$, the entropy $h$ and the drift $l$ without specifying the step distribution $\m$ if there is no possibility of confusion.

\subsection{The upper bound.}

The following is an upper bound of pointwise dimension of the harmonic measure due to Le Prince \cite[Proposition 2.3]{LP07}.
In order to compare the method with the one for the lower bound, we briefly outline the proof.
We show it in a slightly extended form (but the proof is essentially the same).

\begin{theorem}\label{upperbound}
Let $(X, d)$ be a hyperbolic (not necessarily proper or geodesic) metric space, and $\G$ a countable group of isometries of $X$.
If a probability measure $\m$ on $\G$ for which the harmonic measure $\n_\m$ is defined has finite entropy, i.e., $H(\m)<\infty$, and
$$
l_\m:=\inf_\Pr \liminf_{n \to \infty}\frac{(w_{n+1}o|w_n o)}{n} >0,
$$
then we have
$$
\limsup_{r \to 0}\frac{\log \n_\m\left(B(\x, r)\right)}{\log r} \le \frac{h_\m}{l_\m}
$$
for $\n_\m$-almost every $\x$ in $\partial X$. 
Here we denote by $\inf_\Pr$ the essential infimum with respect to the measure $\Pr$ on the space of samples $\O$.
In particular, we have
$$
\dim \n_\m \le \frac{h_\m}{l_\m},
$$
and if $\m$ has finite first moment, then $l_\m$ coincides with the drift.
\end{theorem}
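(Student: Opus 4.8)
The plan is to recast the upper bound on the \emph{pointwise} dimension as a \emph{lower} bound on the harmonic measure of small balls. Since $\log r<0$, the inequality $\log\n_\m(B(\x,r))/\log r\le h_\m/l_\m$ is equivalent to $\n_\m(B(\x,r))\gtrsim r^{h_\m/l_\m}$, so the whole content is that balls around the limit point do not lose mass too fast. I would fix a $\Pr$-typical sample $\o$ whose trajectory $\{w_no\}$ converges to $\x=\pi(\o)$; since $\n_\m=\pi_*\Pr$, an almost-sure statement in $\o$ transfers to a $\n_\m$-almost-every statement in $\x$. Because the trajectory makes linear forward progress (this is exactly where $l_\m>0$ is used), the point $\x$ lies in the shadow $S(w_no,R)$ for every large $n$ once $R$ is fixed large depending on $\d$, and $(\x|o)_{w_no}$ stays bounded, so Lemma~\ref{shadow} applies and gives $S(w_no,R)\subset B(\x,Ce^{-|w_no|+R})$.

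The key step is the lower bound $\n_\m(S(w_no,R))\ge c\,\m^{\ast n}(w_n)$. Iterating stationarity \eqref{stationary} gives $\n_\m=\sum_{g}\m^{\ast n}(g)\,g\n_\m$; keeping only the single term $g=w_n$ and using the equivariance $g^{-1}S_o(go,R)=S_{g^{-1}o}(o,R)$ of shadows yields
\[
\n_\m\bigl(S(w_no,R)\bigr)\ \ge\ \m^{\ast n}(w_n)\,w_n\n_\m\bigl(S(w_no,R)\bigr)=\m^{\ast n}(w_n)\,\n_\m\bigl(S_{w_n^{-1}o}(o,R)\bigr).
\]
It then remains to bound the prefactor $\n_\m(S_{w_n^{-1}o}(o,R))$ below uniformly in $n$. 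Its complement is, up to bounded error, a ball of radius comparable to $e^{-R}$ about the boundary point in the direction of $w_n^{-1}o$; choosing $R$ large makes the $\n_\m$-mass of every such ball at most $\tfrac12$, because $\n_\m$ assigns small mass to small balls (non-atomicity). Hence $\n_\m(S(w_no,R))\ge\tfrac12\m^{\ast n}(w_n)$, and combined with the shadow inclusion this gives $\n_\m(B(\x,r_n))\ge\tfrac12\m^{\ast n}(w_n)$ with $r_n:=Ce^{-|w_no|+R}$.

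Next I would pass to rates. By the Shannon theorem (Theorem~\ref{Shannon}) the numerator satisfies $\log\m^{\ast n}(w_n)\sim -nh_\m$, while $\log r_n\sim -|w_no|$. To promote the estimate from the discrete scales $\{r_n\}$ to a genuine $\limsup$ over all $r\to0$, for each small $r$ I would take the smallest $n$ with $r_n\le r$, so that $r_{n-1}>r$ and hence $|\log r|\ge|w_{n-1}o|-O(1)$, and use monotonicity $\n_\m(B(\x,r))\ge\n_\m(B(\x,r_n))$. Writing the (positive) ratio as $|\log\n_\m(B(\x,r))|/|\log r|$, the index $n$ of the numerator is paired with $|w_{n-1}o|$ in the denominator. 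This is the precise point at which the formulation of $l_\m$ through the consecutive Gromov products enters: since $(w_{n+1}o|w_no)\le|w_no|$ one has $\liminf_n|w_no|/n\ge l_\m$, so the denominator is at least $l_\m(n-1)(1-o(1))$ and the ratio is at most $nh_\m/(l_\m(n-1))(1+o(1))\to h_\m/l_\m$. This gives $\limsup_{r\to0}\log\n_\m(B(\x,r))/\log r\le h_\m/l_\m$ for $\n_\m$-a.e.\ $\x$, whence $\dim\n_\m\le h_\m/l_\m$ by the upper half of the Frostman-type Lemma~\ref{Frostman}. When $\m$ has finite first moment, Kingman's subadditive theorem gives $|w_no|/n\to l$ and $d(o,x_{n+1}o)/n\to0$, so $(w_{n+1}o|w_no)/n\to l$ almost surely and the Gromov-product quantity $l_\m$ coincides with the drift.

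I expect the main obstacle to be not the stationarity/Shannon bookkeeping but the passage from the discrete scales $e^{-|w_no|}$ to all radii $r\to0$: the walk may take arbitrarily large steps, so consecutive scales $r_{n-1}$ and $r_n$ can be far apart, and one must ensure the exponent does not deteriorate across these gaps. Handling this cleanly is exactly why $l_\m$ is measured by the consecutive Gromov products $(w_{n+1}o|w_no)/n$ rather than by $|w_no|/n$, and it is the step requiring the most care. The secondary technical input is the uniform lower bound on the backward shadow mass $\n_\m(S_{w_n^{-1}o}(o,R))$, which is where a genuine (if standard) regularity property of $\n_\m$ is needed.
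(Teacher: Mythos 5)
Your overall architecture is sound, and your key lower bound is obtained by a genuinely different mechanism than the paper's: you iterate stationarity to get $\n_\m(S(w_no,R))\ge\m^{\ast n}(w_n)\,\n_\m\bigl(S_{w_n^{-1}o}(o,R)\bigr)$ and then bound the backward-shadow mass below by $\tfrac12$ uniformly, whereas the paper works on the sample space, bounding $\Pr(A_\e\cap C_n(z))/\Pr(C_n(z))$ below by a martingale-density argument on cylinder sets and only then pushing forward to the boundary. Your route buys a uniform constant in place of a sample-dependent one, at the (mild, but not literally hypothesized) cost of non-atomicity of $\n_\m$; the uniform smallness $\sup_\eta\n_\m(B(\eta,r))\to0$ does follow from non-atomicity on any metric space by a reverse-Fatou argument, and the complement of $S_{w_n^{-1}o}(o,R)$ has $\r$-diameter $O(e^{-R})$, so that step is fine (though ``the boundary point in the direction of $w_n^{-1}o$'' need not exist in a non-proper space; say instead that the complement is contained in a small ball about any of its points).

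The genuine gap is the geometric step linking $w_no$ to a ball around $\x$. You assert that $(\x|o)_{w_no}$ stays bounded, equivalently that $\x\in S(w_no,R)$ for a \emph{fixed} $R$ and all large $n$, and you invoke Lemma~\ref{shadow} with center $\x$ on that basis. This is false: linear forward progress and even sublinear geodesic tracking give $d(w_no,\x(l n))=o(n)$, not $O(1)$, so the deviation $(\x|o)_{w_no}=|w_no|-(\x|w_no)_o$ is unbounded (already for simple random walk on a free group the walk backtracks by unboundedly large amounts infinitely often). Worse, under the stated hypotheses only $H(\m)<\infty$ and the Gromov-product liminf are assumed — there is no upper control on $|w_no|$ at all — so $|w_no|-(\x|w_no)_o$ can even grow linearly. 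Consequently the inclusion $S(w_no,R)\subset B(\x,Ce^{-|w_no|+R})$ fails, and with it your claimed $\n_\m(B(\x,r_n))\ge\tfrac12\m^{\ast n}(w_n)$ at the scale $r_n=Ce^{-|w_no|+R}$. The repair is to work at the Gromov-product scale rather than the distance scale: on the event where $(w_{k+1}o|w_ko)\ge(\l-\e)k$ for all $k\ge N$ one has $(\x|w_no)_o\ge(\l-\e)n-C$ — this is exactly the content of the lemma of Le~Prince that the paper cites, and it is \emph{not} a consequence of naive iteration of the $\d$-inequality (which loses $\d$ per step); one needs the chain argument with the snowflaked metric $\r_\e$ of (\ref{quasi-metric}) and geometric summability of $e^{-\e(\l-\e)k}$. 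Granting that, every $\eta\in S(w_no,R)$ satisfies $(\eta|w_no)_o\ge|w_no|-R\ge(\l-\e)n-R$, hence $S(w_no,R)\subset B(\x,C'e^{-(\l-\e)n})$, and your stationarity bound yields $\n_\m\bigl(B(\x,C'e^{-(\l-\e)n})\bigr)\ge\tfrac12 e^{-n(h+\e)}$; since these scales are geometric with fixed ratio the interpolation is immediate and gives the exponent $(h+\e)/(\l-\e)\to h/l$. Your interpolation via $|w_{n-1}o|\ge(w_no|w_{n-1}o)\ge(\l-\e)(n-1)$ and the identification of $l_\m$ with the drift under finite first moment are both correct as written.
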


\begin{remark}
Note that here we do not assume that $\G$ has exponential growth relative to $X$.
\end{remark}

\begin{proof}[Proof of Theorem \ref{upperbound}]
For every positive $\l$ less than $l$, we define the event
$$
A_{\e, N}:=\left\{\o \in \O \ : \ \text{$(w_{n+1}o|w_n o) \ge (\l-\e)n$ and $\m^{\ast n}(w_n) \ge e^{-n(h+\e)}$ for all $n \ge N$}\right\}.
$$
For every $\e>0$, there exists an $N_\e$ such that $\Pr(A_{\e, N_\e}) \ge 1-\e$ by the Shannon theorem for random walks (Theorem \ref{Shannon}).
Let $A_\e:=A_{\e, N_\e}$.
For a sample $z=\{z_n\}_{n=0}^\infty$ in $\O$, we denote by $C_n(z)$ the samples whose $n$-th component is $z_n$.
Then for $\Pr$-almost every $z$ in $A_\e$, the following ratio has a positive limit
$$
\lim_{n \to \infty}\frac{\Pr\left(A_\e \cap C_n(z)\right)}{\Pr \left(C_n(z)\right)}>0.
$$
(See e.g.\ \cite[Lemma 4.18]{L01} or \cite[(1.4.5) in the proof of Theorem 1.4.1]{K98}.)
Since there exists a constant $C>0$ such that
$$
A_\e \cap C_n(z) \subset \left\{ \o \in \O \ : \ \x_\o \in B\left(\x_z, Ce^{-n(\l-\e)}\right)\right\}
$$
for all $n \ge N_\e$ \cite[Lemma 2.2]{LP07},
we obtain
$$
\limsup_{n \to \infty}\frac{\log\n\left(B\left(\x_z, Ce^{-n(\l-\e)}\right)\right)}{-n(\l-\e)} \le \frac{h+\e}{\l-\e}
$$
for $\Pr$-almost every $z$ in $A_\e$, where we use $\Pr(C_n(z)) =\m^{\ast n}(z_n) \ge e^{-n(h+\e)}$ for all $n \ge N_\e$.
The event $A_\e$ has $\Pr$-measure at least $1-\e$, and the desired upper bound follows by taking $\l \to l$.
The rest follows from the Frostman-type lemma (Lemma \ref{Frostman}).

If $\m$ has finite first moment, then $d(w_n o, w_{n+1}o)=o(n)$ almost surely, and this implies that
$(w_n o | w_{n+1} o)/n$ converges to the drift almost surely.
\end{proof}

\subsection{The proper case}

In this section we prove Theorem \ref{exact} and \ref{exact-semigroup}.
The lower bound of pointwise dimension follows from an estimate on a restricted set of $\n_\m$-measure positive.

\begin{theorem}\label{F}
Let $(X, d)$ be a hyperbolic proper geodesic metric space, and $\G$ a countable group of isometries of $X$ having exponential growth relative to $X$.
If the group $\gr(\m)$ generated by the support of $\m$ is non-elementary and $\m$ has finite first moment,
then for every $\e > 0$, there exists a Borel set $F_\e$ in $\partial X$ such that $\n_\m(F_\e)\ge 1-2\e$ and
$$
\liminf_{r \to 0}\frac{\log \n_\m\left(F_\e \cap B(\x, r)\right)}{\log r}\ge \frac{h_\m}{l_\m} -\e
$$
for $\n_\m$-almost every $\x$ in $\partial X$.
\end{theorem}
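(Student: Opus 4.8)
The plan is to build $F_\e$ by transporting a good event from the path space $\O$ to the boundary through disintegration. First I would fix $\e>0$ and combine the Shannon theorem for random walks (Theorem \ref{Shannon}) with the geodesic tracking property (Theorem \ref{tracking}) to define the good event
$$
A_\e:=\left\{\o\in\O \ : \ \m^{\ast n}(w_n)\le e^{-n(h_\m-\e)} \ \text{and}\ d(w_no,\x_\o(l_\m n))\le \e n \ \text{for all}\ n\ge N_\e\right\},
$$
where $\x_\o$ is a unit speed geodesic ray from $o$ towards the limit point $\pi(\o)$ and $N_\e$ is chosen so that $\Pr(A_\e)\ge 1-\e$. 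Both $\O=\G^{\N}$ and $\partial X$ are standard Borel and $\pi_\ast\Pr=\n_\m$, so Rohlin's disintegration gives a system $\{\Pr_\x\}$ with $\Pr=\int_{\partial X}\Pr_\x\,d\n_\m(\x)$ and $\Pr_\x$ carried by $\pi^{-1}(\x)$. I would then set $F_\e:=\{\x\in\partial X \ : \ \Pr_\x(A_\e)\ge 1/2\}$; since $\int(1-\Pr_\x(A_\e))\,d\n_\m(\x)=\Pr(A_\e^{\sf c})\le\e$, a Markov inequality gives $\n_\m(F_\e)\ge 1-2\e$. Moreover, using $1\le 2\Pr_\x(A_\e)$ on $F_\e$, for any Borel set $S\subset\partial X$ the disintegration yields the transfer bound $\n_\m(F_\e\cap S)\le 2\,\Pr(A_\e\cap\{\pi(\o)\in S\})$.

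Next I would prove the basic probabilistic estimate: for any point $p$ in $X$ and any $m\ge N_\e$,
$$
\Pr\left(A_\e\cap\{d(w_mo,p)\le \e m+C\}\right)\le \#\{g\in\G \ : \ d(go,p)\le \e m+C\}\cdot e^{-m(h_\m-\e)}.
$$
The entropy factor is forced by the constraint $\m^{\ast m}(w_m)\le e^{-m(h_\m-\e)}$ inside $A_\e$: summing over admissible group elements $g=w_m$, any $g$ of larger $\m^{\ast m}$-mass contributes nothing to $A_\e$, so each surviving term is at most $e^{-m(h_\m-\e)}$. The cardinality is controlled by exponential growth relative to $X$: any two orbit points in a ball of radius $\e m+C$ are within $2(\e m+C)$ of one another, so translating by one of them bounds their number by $Ce^{C'\e m}$ uniformly in the centre $p$. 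Hence the left-hand side is at most $Ce^{-m(h_\m-(1+C')\e)}$.

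Now fix $\x$ in $\partial X$ and let $p_m:=\x(l_\m m)$ run along the geodesic ray towards $\x$. Applying the transfer bound to the shadow $S(p_m,R)$ with $R>R_0$ gives $\n_\m(F_\e\cap S(p_m,R))\le 2\,\Pr(A_\e\cap\{\pi(\o)\in S(p_m,R)\})$. On this last event, geodesic tracking places $w_mo$ within $\e m$ of $\x_\o(l_\m m)$, while $\pi(\o)\in S(p_m,R)$ forces the ray $\x_\o$ to fellow-travel the segment $[o,p_m]$ up to distance $O(R+\d)$ of its endpoint $p_m$; combining these, $d(w_mo,p_m)\le \e m+C$. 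Thus the event is contained in the one of the previous estimate with $p=p_m$, and $\n_\m(F_\e\cap S(p_m,R))\le Ce^{-m(h_\m-(1+C')\e)}$. Since $p_m$ lies on the ray we have $(\x|o)_{p_m}=0$, so Lemma \ref{shadow} identifies $S(p_m,R)$ with a ball of radius $r_m\asymp e^{-l_\m m}$; feeding this in gives $\log\n_\m(F_\e\cap B(\x,r_m))/\log r_m\ge (h_\m-(1+C')\e)/l_\m$ for all large $m$. The scales $r_m$ form a geometric sequence of bounded ratio $e^{-l_\m}$, so monotonicity of measure lets me interpolate between consecutive scales and promote this to a genuine $\liminf$ as $r\to 0$. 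As every constant above is uniform in $\x$, the bound holds for every $\x$, and rescaling $\e$ yields the theorem.

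The step I expect to be the main obstacle is the geometric bookkeeping in the transfer: making rigorous, with explicit $\d$-dependent constants, that membership in $S(p_m,R)$ together with the merely \emph{linear} tracking error $\e m$ still confines $w_mo$ to a ball of radius $\e m+C$ about the \emph{exact} ray point $p_m$. The tension is that the shadow radius $R$ is an $O(1)$ constant while the tracking window $\e m$ grows, and it is precisely the choice of the exact ray point $p_m$ (rather than a nearby orbit point) as the shadow centre that simultaneously keeps the counting ball small in the second paragraph and preserves the clean ball-shadow correspondence of Lemma \ref{shadow}. Verifying that the fellow-travelling constant, the constant in Lemma \ref{shadow}, and the exponential-growth constant are all uniform in $\x$ and $m$ is where the care is needed.
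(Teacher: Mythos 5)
Your proposal is correct and follows essentially the same route as the paper: the same event $A_\e$ built from the Shannon theorem and geodesic tracking, the same Rohlin disintegration defining $F_\e$ by thresholding $\Pr_\x(A_\e)$, the same confinement of $w_no$ near the ray point for samples whose limit lands in a shadow (Lemma \ref{hit}), and the same entropy-times-orbit-counting bound converted from shadows to balls via Lemma \ref{shadow}. The only cosmetic differences are your threshold $1/2$ in place of $\e$ and your translation trick for counting orbit points in a ball about an arbitrary centre (the paper instead centres shadows at points within $\e n$ of the orbit); you should also note, as the paper does via the measurable section theorem, that the assignment $\o \mapsto \x_\o$ of geodesic rays can be made Borel so that $A_\e$ is an event.
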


First we shall show Theorem \ref{exact} and \ref{exact-semigroup} by using Theorem \ref{F}.

\begin{proof}[Proof of Theorem \ref{exact}]
For the set $F_\e$ in Theorem \ref{F}, 
we consider the restriction of $\n$ on $F_\e$ and denote it by $\n |_{F_\e}$.
By definition, we have $\dim \n \ge \dim \n |_{F_\e}$, and the last term is at least $h/l -\e$ by Theorem \ref{F} and Lemma \ref{Frostman}.
By the characterization of dimension $\dim \n$ (\ref{var}), 
the set 
$$
G_\e:=\left\{ \x \in \partial X \ : \ \liminf_{r \to 0}\frac{\log \n\left(B(\x, r)\right)}{\log r}\ge \frac{h}{l} -2\e\right\}
$$
has a $\n$-positive measure.
We will prove that $G_\e$ is $\G$-invariant.
We assume that the support of $\m$ generates the whole group $\G$ as a semigroup; otherwise the same proof applies to the non-elementary subgroup which the support generates.
Since the harmonic measure $\n$ is $\m$-stationary, and the support of $\m$ generates $\G$ as a semigroup, by the stationarity of harmonic measure (\ref{stationary})
for every $g$ in $\G$ there exists a constant $c_{g, \m}>0$ depending on $g$ and $\m$ such that
$$
c_{g, \m}^{-1}\n \le g^{-1}\n \le c_{g, \m}\n.
$$
By Lemma \ref{ball}, we have for every $\x$ in $\G$,
$$
\n(B(g\x, r)) \le \n (gB(\x, c_g r))=g^{-1}\n (B(\x, c_g r)) \le c_{g, \m}\n (B(\x, c_g r)).
$$
Then $gG_\e \subset G_\e$ for every $g$ in $\G$, and $g^{-1}G_\e \subset G_\e$; hence $G_\e$ is $\G$-invariant.
Since $\n$ is ergodic with respect to the $\G$-action and $\n(G_\e)>0$, we have $\n(G_\e)=1$ for every $\e>0$.
Therefore
$$
\liminf_{r \to 0}\frac{\log \n\left(B(\x, r)\right)}{\log r}\ge \frac{h}{l}
$$
for $\n$-almost every $\x$ in $\partial X$.
The upperbound
$$
\limsup_{r \to 0}\frac{\log \n\left(B(\x, r)\right)}{\log r}\le \frac{h}{l}
$$
for $\n$-almost every $\x$ in $\partial X$, 
follows from Theorem \ref{upperbound}.
Thus $\n$ is exact dimensional and the local dimension equals $h/l$ for $\n$-almost every point.
The second assertion follows from the Frostman-type lemma (Lemma \ref{Frostman}).
\end{proof}

\begin{proof}[Proof of Theorem \ref{exact-semigroup}]
If $(X, d)$ has bounded growth at some scale, then the boundary $(\partial X, \r)$ has a finite Assouad dimension by Bonk and Schramm \cite[Theorem 9.2]{BS}, and this implies that for each fixed $\a \in (0, 1)$, there exists some integer $n$ such that $(\partial X, \r^\a)$ admits a bi-Lipschitz embedding into $\R^n$ (\cite[2.6.\ Proposition]{A}; see \cite{BS} for a further discussion of the Assouad dimension in this context), i.e.,
there exists a map $f: \partial X \to \R^n$ satisfying that there is a constant $L>0$ such that 
\begin{equation}\label{biL}
(1/L)\r(\x, \y)^\a \le \|f(\x)-f(\y)\|_{\R^n} \le L \r(\x, \y)^\a
\end{equation}
for all $\x, \y$ in $\partial X$.
Let $f_\ast \n$ be the pushforward of $\n$ by $f$, and $F:=F_\e$ be the set as in Theorem \ref{F}.
The Borel density lemma for the Borel measure $f_\ast \n$ in $\R^n$ (e.g.\ \cite[Theorem 8.5.4]{PU}) implies that
$$
\lim_{r \to 0}\frac{f_\ast \n \left(f(F)\cap B_{\R^n}(f(\x), r)\right)}{f_\ast \n\left(B_{\R^n}(f(\x), r)\right)}=1 \ \text{for $\n$-almost every $\x$ in $F$}.
$$
(Here we denote by $B_{\R^n}$ and just by $B$ balls in $\R^n$ and in $\partial X$ respectively.)
Since $f$ satisfies (\ref{biL}),
$$
\liminf_{r \to 0}\frac{\n\left(F\cap B(\x, (Lr)^{1/\a})\right)}{\n\left(B(\x, (r/L)^{1/\a})\right)} \ge 1 \ \text{for $\n$-almost every $\x$ in $F$}.
$$
For $\n$-almost every $\x$ in $F$, there exist positive constants $c(\x)>0$ and $r(\x)>0$ such that for all $0< r < r(\x)$,
$$
\n\left(F\cap B(\x, L^{2/\a}r)\right) \ge c(\x) \n\left(B(\x, r)\right).
$$
By Theorem \ref{F}, we have
\begin{equation}\label{eq}
\liminf_{r \to 0}\frac{\log \n\left(B(\x, r)\right)}{\log r} \ge \frac{h}{l}-\e \ \text{for $\n$-almost every $\x$ in $F$}.
\end{equation}
Since $F=F_\e$ and $\n(F_\e) \ge 1-2\e$, 
we conclude that
for $\n$-almost every $\x$ in $\partial X$,
$$
\liminf_{r \to 0}\frac{\log \n\left(B(\x, r)\right)}{\log r} \ge \frac{h}{l}.
$$
The rest follows as in the proof of Theorem \ref{exact}.
\end{proof}

\begin{remark}
In view of Theorem \ref{exact-semigroup}, one might wonder as to whether Theorem \ref{exact} holds for a semigroup random walk as well on a general proper hyperbolic space.
But at this stage, we are not aware of a proof which covers this level of generality, or of any counter examples.
\end{remark}

By the geodesic tracking property (Theorem \ref{tracking}), 
for $\Pr$-almost every sample $\o$ in $\O$, there exists a geodesic ray $\x_\o$ issuing from $o$ such that
$$
d(w_n(\o)o, \x_\o(l n))=o(n).
$$
Moreover, one can assign the (unit speed) geodesic ray $\x_\o$ in the measurable way, i.e.,
the map from $\O$ to the space of geodesic rays issuing from $o$: $\o \mapsto \x_\o$ is Borel measurable, where the target is endowed with the convergence on closed bounded sets topology.
Indeed, the measurable section theorem states the following:
\begin{theorem}[Theorem 3.4.1 in \cite{Arveson}]\label{measurable section}
Let $P$ be a Polish space, let $Y$ be a Borel space, and let $f$ be a function from $P$ onto $Y$ satisfying that
\begin{itemize}
\item[(i)]	$f$ maps open sets to Borel sets;
\item[(ii)] the inverse image of each point of $Y$ is a closed subset of $P$.
\end{itemize}
Then $f$ has a Borel section, i.e.,
there exists a Borel measurable map $g$ from $Y$ to $P$ such that $f \circ g=id_Y$.
\end{theorem}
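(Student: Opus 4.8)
The statement is the classical measurable selection theorem, so rather than invent a new argument I would reduce it to, and then reprove, the Kuratowski--Ryll-Nardzewski selection principle. The plan is to regard the hypotheses as asserting that the fibre map $\Phi(y):=f^{-1}(y)$ is a closed-valued multifunction on $Y$ taking values in the Polish space $P$, whose \emph{weak measurability} is exactly encoded by the assumption that $f$ carries open sets to Borel sets. Indeed, for every open $U\subset P$ one has $\{y\in Y: \Phi(y)\cap U\neq\emptyset\}=f(U)$, which is Borel by hypothesis; moreover $\Phi(y)$ is nonempty by surjectivity of $f$ and closed by hypothesis (ii). A measurable selection of $\Phi$ is then precisely a Borel map $g$ with $g(y)\in f^{-1}(y)$, i.e.\ $f\circ g=\mathrm{id}_Y$, so producing such a selection is the whole content of the theorem.

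First I would fix a complete metric $d\le 1$ inducing the topology of $P$ together with a countable dense sequence $\{p_n\}_{n\ge 0}$. I would then construct, for each $y$, a sequence $x_0(y),x_1(y),\dots$ of points of $\{p_n\}$ by a greedy refinement maintaining the invariant $\Phi(y)\cap B(x_k(y),2^{-k})\neq\emptyset$: let $x_0(y)$ be the $p_n$ of least index with $y\in f\!\left(B(p_n,1)\right)$, and inductively let $x_{k+1}(y)$ be the $p_m$ of least index satisfying both the fibre-meeting clause $y\in f\!\left(B(p_m,2^{-(k+1)})\right)$ and a nesting tolerance $d(x_k(y),p_m)<2^{-k}+2^{-(k+1)}$. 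An admissible index exists at each stage because the invariant furnishes a point $z\in\Phi(y)\cap B(x_k(y),2^{-k})$ and density supplies a $p_m$ within $2^{-(k+1)}$ of $z$; and the equivalence $\Phi(y)\cap B(p_m,r)\neq\emptyset \iff y\in f(B(p_m,r))$ turns each admissibility clause into a Borel condition on $y$.

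Next I would verify that this construction delivers the section. The nesting tolerance gives $d(x_k(y),x_{k+1}(y))<2^{-k+1}$, so $\{x_k(y)\}_k$ is Cauchy and, by completeness, converges to a point $g(y)\in P$; since each $x_k(y)$ lies within $2^{-k}$ of $\Phi(y)$ and $\Phi(y)$ is closed, the limit satisfies $g(y)\in\Phi(y)=f^{-1}(y)$, whence $f(g(y))=y$. For measurability, each $y\mapsto x_k(y)$ is Borel because the rule ``least index with $y$ in a prescribed Borel set'' partitions $Y$ into countably many Borel pieces on which $x_k$ is constant; and $g$, as the pointwise limit of Borel maps into the metric space $P$, is then Borel.

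The main obstacle I anticipate is bookkeeping in the inductive step rather than any deep difficulty: one must calibrate the radii and the nesting tolerance so that the invariant $\Phi(y)\cap B(x_k(y),2^{-k})\neq\emptyset$ propagates, that an admissible next index is always available, and that the fibre-meeting and nesting clauses are simultaneously satisfiable while the least-index rule preserves measurability. All of this rests on the single structural input that $f$ sends open sets to Borel sets; once that is recognised as weak measurability of the fibre multifunction, the remainder is the standard selection construction, and in the paper I would simply invoke \cite{Arveson} and pass immediately to applying it to the assignment $\o\mapsto\x_\o$.
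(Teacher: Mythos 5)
Your argument is correct. Note first that the paper does not prove this statement at all: it is quoted verbatim as Theorem 3.4.1 of \cite{Arveson} and used as a black box to produce the Borel assignment $\o \mapsto \x_\o$ of geodesic rays, so there is no in-paper proof to compare against. What you supply is a complete, self-contained proof via the standard Kuratowski--Ryll-Nardzewski selection scheme, and each step checks out: the identity $\{y : f^{-1}(y)\cap U \neq \emptyset\} = f(U)$ does translate hypothesis (i) into weak measurability of the closed-valued fibre map; the invariant $f^{-1}(y)\cap B(x_k(y),2^{-k})\neq\emptyset$ propagates because the fibre-meeting clause at stage $k+1$ is exactly the next instance of the invariant, and an admissible index always exists by density; the tolerance $d(x_k,x_{k+1})<2^{-k}+2^{-(k+1)}$ gives a summable Cauchy sequence, completeness gives the limit $g(y)$, and closedness of the fibre (hypothesis (ii)) puts $g(y)$ in $f^{-1}(y)$; and the least-index rule keeps each $x_k$ Borel because, conditioned on the (countably many, Borel) values of $x_{k-1}$, every admissibility clause is of the form $y \in f(B(p_m,r))$, which is Borel by (i). The only cosmetic remark is that your proof never uses any structure on $Y$ beyond its $\sigma$-algebra, so it actually establishes the theorem in the stated generality of an abstract Borel space $Y$, which is the form the paper needs. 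This is essentially the argument behind Arveson's own proof, so invoking the citation, as the paper does and as you propose to do, is entirely appropriate.
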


The space of geodesic rays issuing from $o$ admits a structure of Polish space, and $\partial X$ is a Borel space;
one can check that the map assigning the end point of a geodesic ray satisfies (i) and (ii) in the measurable section theorem (Theorem \ref{measurable section}),
and it has a Borel section.
Composing the Borel section with the map $\O$ from $\partial X$ given by the limiting point of the random walk,
we obtain the desired Borel map from $\O$ to the space of geodesic rays.

Let us define the event 
$$
A_{\e, N}:=\left\{ \o \in \O \ : \ \text{$d(w_n(\o)o, \x_\o(l n)) \le \e n$ and $\m^{\ast n}\left(w_n(\o)\right) \le e^{-n(h-\e)}$ for all $n \ge N$}\right\}.
$$
Notice that $A_{\e, N}$ is increasing in $N$ and $\Pr\left(\bigcup_N A_{\e, N}\right)=1$ by Theorem \ref{tracking} and Theorem \ref{Shannon}; 
hence for every $\e>0$ there exists an $N_\e$ such that for all $N \ge N_\e$ we have
$\Pr(A_{\e, N}) \ge 1 -\e$.
Let $A_\e:=A_{\e, N_\e}$.
The trajectories on the event $A_{\e, N}$ are well-controlled by geodesic rays.
The following lemma implies that
on the event $A_{\e, N}$,
if two limiting points are in the same shadow around $x$ at the distance $(l\pm\e)n$ from $o$,
then the trajectories at time $n \ge N$ are in a distance $2 \e n$ (up to an additive constant).

\begin{lemma}\label{hit}
Fix an $R>4\d$. 
For every $n \ge N_\e$, let $x$ be a point in $X$ such that $\left||x| - ln\right| \le \e n$.
For every $\o$ in $A_{\e}$,
if the limiting point of $\{w_n(\o)o\}_{n=0}^\infty$ is in the shadow $S(x, R)$,
then $w_n(\o)o$ is in the ball $B(x, 2\e n+C_{R, \d})$ for the $n \ge N_\e$, where $C_{R, \d}$ is a constant depending only on $R$ and $\d$.
\end{lemma}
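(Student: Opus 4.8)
The plan is to chain three displacements along the geodesic ray $\x_\o$ by the triangle inequality. On $A_\e$ the tracking estimate $d(w_n(\o)o, \x_\o(ln)) \le \e n$ places the trajectory point $w_n(\o)o$ near the ray point $\x_\o(ln)$ (for $n \ge N_\e$); the shadow hypothesis will place $x$ near the ray, at an arc-length parameter close to $|x|$; and the hypothesis $\left||x|-ln\right|\le \e n$ aligns these two parameters. Summing the three distances then produces the asserted bound $2\e n + C_{R,\d}$.

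First I would unwind the shadow condition. By the description of $S(x,R)$, the fact that the limiting point $\x_\o$ lies in $S(x,R)$ means that a geodesic ray from $o$ to that point meets the ball $B(x, R+C_\d)$; by $\d$-slimness of triangles our chosen ray $\x_\o$ stays within $C_\d$ of any such ray, so there is a parameter $s$ with $d(\x_\o(s), x) \le R + C_\d$. Since $\x_\o$ is unit speed with $\x_\o(0)=o$, this gives $\left|s - |x|\right| = \left|d(o,\x_\o(s)) - d(o,x)\right| \le d(\x_\o(s), x) \le R + C_\d$. (Alternatively one reads this off the definition $(\x_\o|x)_o \ge |x| - R$ together with the standard $\d$-hyperbolic estimate comparing $(\x_\o|x)_o$ with the nearest-point projection of $x$ onto the ray.) Combined with $\left||x| - ln\right| \le \e n$ this yields $|s - ln| \le \e n + R + C_\d$, whence, along the unit-speed ray, $d(\x_\o(ln), \x_\o(s)) = |ln - s| \le \e n + R + C_\d$.

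Finally I would assemble the three pieces: on $A_\e$, for $n \ge N_\e$,
\[
 d(w_n(\o)o, x) \le d(w_n(\o)o, \x_\o(ln)) + d(\x_\o(ln), \x_\o(s)) + d(\x_\o(s), x) \le 2\e n + 2R + 2C_\d,
\]
which is the claim with $C_{R,\d} := 2R + 2C_\d$. The one nontrivial step is the first: extracting from the shadow --- whether through the inequality $(\x_\o|x)_o \ge |x|-R$ or through the picture of a geodesic passing through $B(x,R)$ --- that $x$ lies within a $\d$-controlled distance of the \emph{specific} ray $\x_\o$ and at a parameter close to $|x|$. This is routine but genuinely $\d$-hyperbolic (rather than tree) geometry, and it is the sole source of the constant $C_{R,\d}$; the standing assumption $R > 4\d$ guarantees the shadow is large enough for these approximations to apply.
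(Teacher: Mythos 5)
Your proof is correct and follows essentially the same route as the paper's: both unwind the shadow condition to show the tracking ray $\x_\o$ passes within an $(R,\d)$-controlled distance of $x$ at a parameter close to $|x|$, and then chain the triangle inequality through the ray point at parameter $ln$ using the tracking estimate and the hypothesis $\left||x|-ln\right|\le\e n$. The only cosmetic difference is that the paper uses $\x_\o(|x|)$ as the intermediate point where you use $\x_\o(s)$; the resulting constant is the same up to the usual $\d$-bookkeeping.
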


\begin{proof}
Let $\x_\o$ be a geodesic ray from $o$ satisfying that $d(w_n(\o)o, \x_\o(ln)) \le \e n$ for all $n \ge N$.
If the limiting point of $\{w_n(\o)o\}_{n=0}^\infty$ is in the shadow $S(x, R)$, then the limiting point of $\x_\o$ is also in the same shadow, whence
the geodesic ray $\x_\o$ intersects the ball $B(x, R+10\d)$ by the $\d$-hyperbolicity.
By the triangular inequality, we have $d(x, \x_\o(|x|))\le 2R+20\d$ (the geodesic ray has unit speed), and the claim follows.
\end{proof}

\begin{remark}\label{remarkhit}
The above Lemma \ref{hit} is also proved with $(1, C_\d)$-quasi-geodesic rays (\ref{qg}) in a non-proper space.
Namely, one can replace the geodesic ray $\x_\o$ by a $(1, C_\d)$-quasi-geodesic ray in the definition of $A_\e$, and show Lemma \ref{hit} (with a different constant $C_{R, \d}$) by $\d$-hyperbolicity.
This will be used in Section \ref{non-proper}.
\end{remark}

Recall that the map $\pi : \O \to \partial X$ is defined by
$\o \mapsto \x_\o$, i.e.,
assigning the limiting point in the boundary for $\Pr$-almost every sample.
We define $\Sc$ as the smallest $\s$-algebra for which the map $\pi$ is measurable.
We consider a system of conditional probability measures of $\Pr$ with respect to the sub $\s$-algebra $\Sc$ of $\Fc$ on $\O$.
Recall that the probability space $(\O, \Fc, \Pr)$ is {\it standard}, i.e., $\O=\G^\N$ admits a structure of complete separable metric space, 
$\Fc$ is the Borel $\s$-algebra and $\Pr$ is a Borel probability measure.
Then there exists a system of Borel probability measures $\{\Pr_{\pi(\o)}\}_{\o \in \O}$ the {\it conditional probability measures} of $\Sc$ such that
$$
\Pr=\int_\O \Pr_{\pi(\o)}d\Pr(\o).
$$
Note that for every $\Fc$-measurable set $A$, the map $\o \mapsto \Pr_{\pi(\o)}(A)$ is $\Sc$-measurable, and the map is determined only for $\Pr$-almost every $\o$.
(See the original work by Rohlin \cite{Rohlin} or a modern treatment in \cite{Simmons}.)
Since the harmonic measure $\n$ is the image measure of $\Pr$ by $\pi$,
for every Borel set $A$ in $\partial X$ and for every Borel set $B$ in $\O$, we have
$$
\Pr( B | \pi^{-1}(A))=\frac{1}{\n(A)}\int_A \Pr_\x(B)d\n(\x).
$$

Let us define a Borel set in the boundary $\partial X$ by
\begin{equation}\label{defF}
F_\e:=\left\{ \x \in \partial X \ : \ \Pr_\x(A_\e) \ge \e \right\}.
\end{equation}
Since it holds that
\begin{align*}
1-\e \le \Pr(A_\e)=\int_{\partial X} \Pr_\x(A_\e)d\n(\x)	&=\int_{F_\e}\Pr_\x(A_\e)d\n(\x)+\int_{F_\e^{\sf c}}\Pr_\x(A_\e)d\n(\x)\\
												&\le \n(F_\e)+\e \n(F_\e^{\sf c}),
\end{align*}
we have 
\begin{equation}\label{Fe}
\n(F_\e) \ge 1-2 \e.
\end{equation}

\begin{proof}[Proof of Theorem \ref{F}]
Let $N \ge N_\e$, and
let $\{x_n\}_{n=0}^\infty$ be an arbitrary sequence in $X$ such that $|x_n| = ln$ and $x_n$ is within a distance $\e n$ to the orbit $\G o$ for all $n \ge N$.
We will fix the sequence $\{x_n\}_{n=0}^\infty$ later.
By Lemma \ref{hit}, for every $\o$ in $A_\e$ and for every $n \ge N$,
if the limiting point $\x_\o$ is in $S(x_n, R)$, then the trajectory $\{w_n(\o)o\}_{n=0}^\infty$ must hit the ball $B(x_n, 2\e n+C_0)$ at time $n$, where $C_0:=C_{R,\d}$.
Hence for every $n \ge N$, we have
\begin{align*}
&\Pr\left(\x_\o \in F_\e \cap S(x_n, R)\right) \\
&\le \Pr\left(A_\e \cap\left\{w_n(\o)o \in B(x_n, 2\e n+C_0)\right\}\right) + \Pr\left(A_\e^{\sf c}\cap\{\x_\o \in F_\e \cap S(x_n, R)\}\right).
\end{align*}
The second term in the right hand side has a bound by using the conditional probability measures:
\begin{align*}
\Pr\left(A_\e^{\sf c}\cap\{\x_\o \in F_\e \cap S(x_n, R)\}\right)
&=\int_{F_\e \cap S(x_n, R)}\Pr_{\x}\left(A_\e^{\sf c}\right)d\n(\x) \\
&\le (1-\e) \n\left(F_\e \cap S(x_n, R)\right),
\end{align*}
where in the last inequality we use $\Pr_\x(A_\e^{\sf c}) \le 1-\e$ for $\n$-almost every $\x$ in $F_\e$ by (\ref{defF}).
Therefore for all $n \ge N$ we obtain
$$
\e \n\left(F_\e \cap S(x_n, R)\right) \le \Pr\left(A_\e \cap\left\{w_n(\o)o \in B(x_n, 2\e n+C_0)\right\}\right).
$$
Recall that $x_n$ is within a distance $\e n$ to the orbit $\G o$ for all $n \ge N$. In this case, since the group $\G$ has exponential growth relative to $X$,
the number of group elements $g$ in $\G$ such that $go$ is in the ball $B(x_n, 2\e n+C_0)$ is at most $Ce^{C(3\e n+C_0)}$ by the triangular inequality.
Therefore we have 
$$
\Pr\left(A_\e \cap\left\{w_n(\o)o \in B(x_n, 2\e n+C_0)\right\}\right) \le Ce^{C(3\e n+C_0)}e^{-n(h-\e)}.
$$

For every $\o$ in $A_{\e, N}$, we take the limiting point $\x_\o$, which also denotes a (unit speed) geodesic ray issuing from $o$ towards the point, we set $x_n:=\x_\o(ln)$ since it is within a distance $\e n$ to the orbit $\G o$ for all $n \ge N$.
Then for every $\o$ in $A_{\e, N}$, we obtain
\begin{align*}
\liminf_{n \to \infty}\frac{\log \n\left(F_\e \cap S(\x_\o(ln), R)\right)}{-n} \ge h-\e -3C\e.
\end{align*}
By Lemma \ref{shadow}, we have
\begin{align*}
\liminf_{r \to 0}\frac{\log \n\left(F_\e \cap B(\x_\o, r)\right)}{\log r} \ge \frac{h}{l}-C'\e.
\end{align*}
This holds for every $\o$ in $A_{\e, N}$ for all $N \ge N_\e$, and thus for $\Pr$-almost every $\o$; we conclude the proof.
\end{proof}

\subsection{The case of word hyperbolic groups}

Let $\G$ be a word hyperbolic group, i.e., $\G$ is a finitely generated group, and the Cayley graph associated with a finite symmetric set of generators (the group endowed with a word metric) is hyperbolic.
We consider in the group $\G$ a metric $d$ which is left invariant, hyperbolic and quasi-isometric to some word metric.
Since a geodesic space which is quasi-isometric to a hyperbolic geodesic space is also hyperbolic (with a different constant $\d$ for the $\d$-hyperbolicity), the metric $d$ in $\G$ is in fact quasi-isometric to any word metric \cite{GdlH}[14.-Corollaire, Chap.\ 5].
Here we do not assume that the metric $d$ is geodesic, but we assume that it is hyperbolic.
It is of interest the case when the group admits a (non-geodesic) hyperbolic metric other than a word metric (but quasi-isometric to it), we prove the following theorem.

\begin{theorem}\label{word}
Let $\G$ be a word hyperbolic group, and $d$ be a metric which is left invariant, hyperbolic and quasi-isometric to some word metric.
We denote by $\partial \G$ the boundary associated with the hyperbolic metric space $(\G, d)$.
For every probability measure $\m$ on $\G$, if the group $\gr(\m)$ generated by the support of $\m$ is non-elementary,
then the harmonic measure $\n_\m$ on $\partial \G$ is exact dimensional, and 
$$
\dim \n_\m=\frac{h_\m}{l_\m}.
$$
\end{theorem}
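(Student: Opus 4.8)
The plan is to carry out the scheme of Theorems \ref{exact} and \ref{exact-semigroup} directly inside $(\G,d)$, the sole new feature being that $d$ is assumed only hyperbolic, not geodesic. First I would record the structural facts that make the machinery available. Because $d$ is quasi-isometric to a word metric $d_w$, every closed $d$-ball is finite, so $(\G,d)$ is a proper but non-geodesic hyperbolic space; its Gromov boundary coincides topologically with that of the Cayley graph; $\G$ acts on $(\G,d)$ by isometries through left translation; and, being finitely generated, $\G$ has exponential growth relative to $d$, the word-metric growth bound transferring through the quasi-isometry. Since $\gr(\m)$ is non-elementary, the trajectory $\{w_n o\}$ converges to a point of $\partial\G$ and $\n_\m$ is a well-defined non-atomic $\m$-stationary measure, ergodic under $\G$ by Theorem \ref{ergodic}. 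Finiteness and positivity of $h_\m$ and $l_\m$, that is, finite entropy together with finite first moment with respect to $d$, which is what the formula presupposes, will be used exactly as in the proper case.

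The upper bound is immediate: Theorem \ref{upperbound} is stated for hyperbolic spaces that need be neither proper nor geodesic, so it applies verbatim to $(\G,d)$ and yields $\limsup_{r\to 0}\log\n_\m(B(\x,r))/\log r\le h_\m/l_\m$ for $\n_\m$-almost every $\x$, with balls measured in the visual quasi-metric $\r$ attached to $d$.

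The heart of the matter is a version of Theorem \ref{F} that dispenses with the geodesic hypothesis. Here I would replace geodesic rays throughout by $(1,C_\d)$-quasi-geodesic rays as in (\ref{qg}). Using the quasi-geodesic analogue of the tracking property of Theorem \ref{tracking}, obtained by the same argument, one selects measurably in $\o$ (via the measurable section theorem, Theorem \ref{measurable section}) such a ray $\x_\o$ from $o$ to the limiting point with $d(w_n o,\x_\o(l_\m n))=o(n)$; Lemma \ref{hit} then applies in its quasi-geodesic form, as recorded in Remark \ref{remarkhit}. With this substitution the proof of Theorem \ref{F} goes through unchanged: I would define the events $A_{\e,N}$ and the set $F_\e=\{\x:\Pr_\x(A_\e)\ge\e\}$ by disintegration as before, combine the quasi-geodesic Lemma \ref{hit} with the shadow--ball comparison of Lemma \ref{shadow} (which rests only on Gromov products and so survives the loss of geodesics, after adjusting the constants by $\d$), and bound the number of orbit points in the relevant ball by exponential growth. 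This produces a Borel set with $\n_\m(F_\e)\ge 1-2\e$ and $\liminf_{r\to 0}\log\n_\m(F_\e\cap B(\x,r))/\log r\ge h_\m/l_\m-\e$ for $\n_\m$-almost every $\x$.

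To promote this to a full-measure lower bound I would use the Borel density argument of Theorem \ref{exact-semigroup} rather than the stationarity trick of Theorem \ref{exact}, since only $\gr(\m)$, and not necessarily $\sgr(\m)$, is assumed to be a group. This needs a bi-Lipschitz embedding of $(\partial\G,\r^\a)$ into some $\R^n$, which holds because $(\partial\G,\r)$ is doubling: the boundary homeomorphism induced by the quasi-isometry $(\G,d)\to(\G,d_w)$ is a quasi-symmetry, and it carries the doubling visual boundary of the Cayley graph (of finite Assouad dimension by Bonk--Schramm) onto $(\partial\G,\r)$, the doubling property being a quasi-symmetric invariant; Assouad's embedding theorem then supplies the map. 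Feeding the two local bounds into Lemma \ref{Frostman} gives exact dimensionality and $\dim\n_\m=h_\m/l_\m$. I expect the main obstacle to lie in the non-geodesic geometry: verifying the quasi-geodesic tracking property and checking that Lemma \ref{hit} and the shadow estimates retain their shape once geodesics are replaced by $(1,C_\d)$-quasi-geodesics, with the $\d$-dependence of all constants tracked carefully. A secondary subtlety is confirming that a visual quasi-metric coming from a non-geodesic $d$ still yields a doubling boundary, which is precisely what keeps the Borel density step alive.
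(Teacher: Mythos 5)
Your proposal is correct and follows essentially the same route as the paper's (deliberately sketchy) proof: replace geodesic rays by quasi-geodesic rays coming from the Cayley graph, note that exponential growth and the shadow estimates of Lemma \ref{shadow} survive with uniform constants, run the argument of Theorems \ref{F} and \ref{upperbound}, and close the lower bound via the bounded-growth-at-some-scale/Borel density mechanism of Theorem \ref{exact-semigroup}. Your extra care about why the visual boundary of the non-geodesic metric is still doubling (quasi-symmetric invariance under the boundary map induced by the quasi-isometry) is a detail the paper leaves implicit, but it is not a different approach.
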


\begin{proof}
Fix the identity element as a base point $o$.
Since $(\G, d)$ is quasi-isometric to a hyperbolic geodesic space (a Cayley graph), a geodesic ray $\x$ in the Cayley graph is a quasi-geodesic ray in $(\G, d)$.
Note that for all such quasi-geodesic rays $\x$, 
Lemma \ref{shadow} holds for uniform constants; and also that $\G$ has exponential growth relative to $(\G, d)$ since it is the case relative to the Cayley graph.
A Cayley graph has bounded growth at some scale, whence the proof follows as in Theorem \ref{exact-semigroup}, \ref{F} and \ref{upperbound}.
We omit the details.
\end{proof}

\subsection{The non-proper case: acylindrical actions}\label{non-proper}

We extend our results to a group acting on a non-proper Gromov hyperbolic space.
Let $X$ be a separable geodesic (but not necessarily proper) Gromov hyperbolic space.
In \cite{MT}, Maher and Tiozzo showed the following:

\begin{theorem}[Theorem 1.1 in \cite{MT}]\label{MTconv}
Let $\G$ be a countable group of isometries of a separable geodesic Gromov hyperbolic space $X$.
If the semigroup ${\rm sgr}(\m)$ generated by the support of $\m$ is a non-elementary subgroup of $\G$,
then the harmonic measure is well-defined; namely,
for the random walk $\{w_n\}_{n=0}^\infty$ with the step distribution $\m$, 
almost every trajectory $\{w_n o\}_{n=0}^\infty$ converges to a point $\x$ in the boundary $\partial X$,
and the distribution $\n$ of $\x$ is non-atomic and the unique $\m$-stationary measure on $\partial X$.
\end{theorem}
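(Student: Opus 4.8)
This is the convergence theorem of Maher and Tiozzo, so I would only sketch the strategy; the essential difficulty is that none of the four assertions --- escape to $\partial X$, existence of a stationary measure, non-atomicity, uniqueness --- can be obtained as in the proper case, because $X$ is not proper and $\partial X$ is not compact. In particular there is no Ascoli--Arzel\`a theorem, so one has at one's disposal only the $(1,C_\d)$-quasi-geodesic rays of (\ref{qg}) in place of genuine geodesics, and the space of probability measures on $\partial X$ is no longer weak-$\ast$ compact. The plan is to build a stationary measure by hand, deduce the almost sure convergence of $\{w_n o\}_{n=0}^\infty$ from a measure-valued martingale, and then dispose of non-atomicity and uniqueness by boundary dynamics.

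First I would construct a $\m$-stationary measure $\n$ on $\partial X$. In the proper case one takes any weak-$\ast$ limit of the Ces\`aro averages of $\m^{\ast n}$ pushed to $\partial X$ and invokes compactness; here the crucial point is instead \emph{tightness}. Using $\d$-hyperbolicity and the non-elementarity of $\sgr(\m)$ --- which furnishes two loxodromic isometries with disjoint fixed pairs, hence a ping-pong dynamics --- one shows that for every $\e>0$ a fraction $1-\e$ of the mass of $\m^{\ast n}\d_o$ is captured by a shadow $S(x,R)$ with $R$ bounded independently of $n$, so that the corresponding boundary mass lies in a fixed compact subset of $\partial X$. Prokhorov's theorem then produces a weak-$\ast$ limit $\n$, which the Markov property renders stationary, $\n=\sum_g\m(g)g\n$. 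This tightness estimate, resting on the decay of shadows along the walk, is the technical heart of the non-proper setting.

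Granted $\n$, the sequence $M_n:=w_n\n$ is a measure-valued martingale, since $\E[w_{n+1}\n\mid\Fc_n]=w_n(\m\ast\n)=w_n\n$; martingale convergence makes $w_n\n$ converge weak-$\ast$ almost surely to a random limit $\n_\infty$. Here hyperbolicity forces $\n_\infty$ to be a single Dirac mass $\d_{\x}$: via Lemma \ref{shadow} and (\ref{Gpro}) the translates $w_n\n$ concentrate on shadows $S(w_n o, R)$ whose diameter in the quasi-metric $\r$ tends to $0$. This simultaneously shows that $\{w_n o\}_{n=0}^\infty$ converges to $\x$ in $\partial X$ and identifies $\n$ with the hitting measure $\pi_\ast\Pr$. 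I would run the shadow estimates with the quasi-geodesic rays (\ref{qg}) exactly as in Remark \ref{remarkhit}, so that the absence of geodesics only alters additive constants.

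For non-atomicity, suppose $\n$ had an atom; let $m$ be the maximal atomic mass and $S$ the (finite) set carrying it. Since $g\n(\{\x\})=\n(\{g^{-1}\x\})\le m$, the identity $\n=\sum_g\m(g)g\n$ forces $g^{-1}S\subseteq S$ for every $g$ in the support, so $S$ is $\sgr(\m)$-invariant, contradicting non-elementarity, which admits no finite invariant set on $\partial X$. Uniqueness then follows from the martingale step: any stationary $\n'$ gives $w_n\n'\to\d_{\x}$ with the \emph{same} limit $\x=\lim_n w_n o$, so integrating against $\Pr$ recovers $\n'=\pi_\ast\Pr=\n$. The main obstacle throughout is precisely the non-compactness of $\partial X$, which is what makes both the existence of $\n$ (the tightness estimate) and the weak-$\ast$ convergence of $w_n\n$ genuinely delicate; the dynamical inputs, shadow decay and loxodromic ping-pong, are the same in spirit as in the proper case.
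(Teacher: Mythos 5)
This statement is quoted from Maher--Tiozzo and the paper gives no proof of its own, so your sketch can only be measured against the actual argument of \cite{MT}. Your skeleton (stationary measure, measure-valued martingale, shrinking shadows, atoms excluded by a finite invariant set, uniqueness via identification with the hitting measure) is the right one, and the non-atomicity and uniqueness steps are essentially correct. But the step you yourself identify as ``the technical heart'' --- producing the stationary measure by tightness on $\partial X$ --- has a genuine gap. First, $\m^{\ast n}\d_o$ is a measure on $X$, and there is no map $X\to\partial X$ along which to ``push'' it; the Ces\`aro-average construction requires a compactification of $X$, which is exactly what is unavailable here. Second, your proposed fix asserts that a shadow $S(x,R)$ is ``a fixed compact subset of $\partial X$.'' In the non-proper setting this is false in general: $\partial X$ is separable and completely metrizable but need not be locally compact (the Gromov boundary of the curve complex is the standard example), so shadows are closed but typically non-compact, and Prokhorov's theorem cannot be invoked. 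The same non-compactness also undermines the martingale step as written: for each bounded continuous $f$ the scalar martingale $\int f\,d(w_n\n)$ converges, but upgrading this to weak-$\ast$ convergence of $w_n\n$ to a genuine probability measure again requires almost sure tightness of the family $\{w_n\n\}$, which you have not supplied.

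What Maher and Tiozzo actually do is replace $X\cup\partial X$ by the \emph{horofunction compactification} of $X$, which is compact whenever $X$ is separable (it is a closed subset of a product of intervals, via $1$-Lipschitz functions). The stationary measure is first constructed there by the usual compactness argument, the measure-valued martingale is run there, and only afterwards is the measure transported to the Gromov boundary via the local minimum map, with shadow estimates showing that the limit measure charges the horofunctions ``converging to infinity.'' If you want to complete your outline you should either import that compactification or find another mechanism for tightness; the ping-pong dynamics of two independent loxodromics alone does not produce compact subsets of $\partial X$.
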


Moreover, when the probability measure $\m$ has finite first moment, they showed the positivity of drift and the geodesic tracking property by using the above result.

\begin{theorem}[Theorem 1.2 and 1.3 in \cite{MT}]\label{MTgeodesic}
If in addition to the condition in Theorem \ref{MTconv} the probability measure $\m$ has finite first moment, then there is a positive constant $l_\m>0$ such that for almost every sample path, we have
$$
\lim_{n \to \infty}\frac{1}{n}d(o, w_n o)=l_\m.
$$
Furthermore, for almost every sample path, there exists a quasi-geodesic ray $\g$ such that
$$
\lim_{n \to \infty}\frac{1}{n}d(w_n o, \g)=0.
$$
\end{theorem}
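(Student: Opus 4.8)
The plan is to establish the two assertions in turn, deriving the linear rate of escape first and then obtaining the quasi-geodesic tracking from it.

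\emph{Existence of the rate.} The quantity $d(o,w_n o)$ is a nonnegative subadditive cocycle over the shift: writing $w_{m+n}=w_m\,(w_m^{-1}w_{m+n})$ and using that group elements act by isometries, $d(o,w_{m+n}o)\le d(o,w_m o)+d(o,w_m^{-1}w_{m+n}o)$, where $w_m^{-1}w_{m+n}=x_{m+1}\cdots x_{m+n}$ is an independent copy of $w_n$. The finite first moment gives $\E\,d(o,w_n o)\le n\,L(\m)<\infty$, so Kingman's subadditive ergodic theorem applies; as the Bernoulli shift on the increments is ergodic, the limit $\lim_n \frac1n d(o,w_n o)$ exists almost surely and equals the deterministic constant $l_\m=\inf_n \frac1n\E\,d(o,w_n o)$.

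\emph{Positivity of $l_\m$ (the crux).} I would pass to the bilateral walk of Theorem \ref{ergodic}, with forward limit $\x^+$ and backward limit $\x^-$. Since $\n$ is non-atomic by Theorem \ref{MTconv}, the product $\n\otimes\check\n$ charges no point of the diagonal, so $\x^+\neq\x^-$ and $(\x^+\mid\x^-)_o<\infty$ almost surely. Feeding the convergence $w_n o\to\x^+$ into the hyperbolicity inequality (\ref{Gpro}) then forces $(w_n o\mid\x^-)_o$ to remain bounded along the trajectory, a qualitative ``no-backtracking'' statement. The real work is to upgrade this to a quantitative linear lower bound. For this I would analyze the harmonic measures of the nested shadows $S(w_n o,R)$ through the stationarity relation (\ref{stationary}): these measures shrink to zero since the shadows close down on the single point $\x^+$, and non-atomicity of $\n$ is exactly what guarantees a definite rate of shrinking; transferring this rate to ball radii via Lemma \ref{shadow} bounds $\frac1n d(o,w_n o)$ below, whence $l_\m>0$. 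This is the main obstacle, because in the non-proper setting local compactness is lost: the boundary points $\x^\pm$ can only be approached through $(1,C_\d)$-quasi-geodesic rays of type (\ref{qg}) and through the extended Gromov product, never through genuine geodesics or horofunctions.

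\emph{Quasi-geodesic tracking.} Granting $l_\m>0$, I would reduce tracking to controlling the \emph{defect} $|w_n o|-(w_n o\mid\x^+)_o$, which in a $\d$-hyperbolic space measures, up to an additive multiple of $\d$, the distance from $w_n o$ to any $(1,C_\d)$-quasi-geodesic ray $\g$ issuing from $o$ towards $\x^+$; such a ray exists by (\ref{qg}). It thus suffices to prove $|w_n o|-(w_n o\mid\x^+)_o=o(n)$, and since $\frac1n|w_n o|\to l_\m$ by the first two steps this is equivalent to $\frac1n(w_n o\mid\x^+)_o\to l_\m$. I would obtain the latter by a ray-approximation argument of Karlsson--Ledrappier type: the Busemann-type cocycle in the direction $\x^+$ is integrable, and the subadditive ergodic theorem identifies its linear rate with the drift $l_\m$, precisely because the horofunction direction is the distance-optimal one. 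Combining the two rates gives $d(w_n o,\g)=o(n)$, which is the claim; as before one stays within the non-proper framework by working throughout with quasi-geodesics and the extended Gromov product in place of geodesics and horofunctions.
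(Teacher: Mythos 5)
First, a point of comparison: the paper does not prove Theorem \ref{MTgeodesic} at all --- it is imported verbatim from Maher and Tiozzo \cite{MT} (their Theorems 1.2 and 1.3), so there is no in-paper argument to measure yours against. On its own terms, your first step (Kingman's subadditive ergodic theorem giving an a.s.\ constant limit $l_\m=\inf_n\frac1n\E\,d(o,w_no)$) is correct, and your third step is a legitimate alternative route to sublinear tracking: reducing $d(w_no,\g)=o(n)$ to the defect estimate $|w_no|-(w_no\mid\x^+)_o=o(n)$ and obtaining the latter from a Karlsson--Ledrappier horofunction argument is sound in a hyperbolic space, including a non-proper one, provided one checks that the horofunction produced is unbounded along the orbit (which uses $l_\m>0$) and hence determines the Gromov boundary point $\x^+$.

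The genuine gap is exactly where you flag ``the crux'': positivity of $l_\m$. The claim that ``non-atomicity of $\n$ is exactly what guarantees a definite rate of shrinking'' of $\n(S(w_no,R))$ is false as stated: non-atomicity yields only the qualitative fact that these measures tend to $0$, with no rate, and Lemma \ref{shadow} merely converts shadows into balls --- neither supplies any lower bound on $d(o,w_no)$. Note also that the soft route used elsewhere in this paper (non-trivial $\m$-boundary, hence $h_\m>0$, hence $l_\m>0$ via the fundamental inequality $h_\m\le C\,l_\m$) is unavailable here, because in the acylindrical non-proper setting no exponential-growth hypothesis relative to $X$ is assumed. What is actually required, and what \cite{MT} establish, is (i) a \emph{uniform} decay of the harmonic measure of shadows, $\sup_x\n\left(S_o(x,R)\right)\to0$ as $d(o,x)-R\to\infty$, which in the absence of properness cannot be extracted from non-atomicity by a naive compactness argument; and (ii) a stopping-time/Markov-property argument converting this uniform decay into linear progress in probability, i.e.\ $\liminf_{n}\Pr\left(d(o,w_no)\ge Ln\right)>0$ for some $L>0$, which together with your Kingman step forces $l_\m\ge L>0$. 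Your sketch contains neither ingredient, so the positivity --- on which both the statement and your tracking argument rest --- is not established.
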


Recall that a group $\G$ acts on $X$ {\it acylindrically} if the group acts on $X$ by isometries and satisfies the following:
for every $K \ge 0$, there exist constants $R, N$ such that for all two points $x$ and $y$ in $X$ with $d(x, y) \ge R$ 
there are at most $N$ group elements $g$ in $\G$ satisfying that $d(x, g x) \le K$ and $d(y, g y) \le K$.

Following \cite{MT}, we define a group element of {\it bounded geometry}:
Let $v$ be a group element in $\G$, and $K, R \ge 0$. 
For two boundary points $\a$ and $\b$ in $\partial X$,
we say that a group element $g$ has $(K, R, v)$-{\it bounded geometry} with respect to the pair $(\a, \b)$ if
\begin{itemize}
\item[(i)] $d(go, gvo) \ge R$, 
\item[(ii)] $\a \in S_{gvo}(go, K)$ and $\b \in S_{go}(gvo, K)$.
\end{itemize}
We denote by $\Oc_{K, R, v}(\a, \b)$ the set of $(K, R, v)$-bounded geometry elements for $(\a, \b)$.
Note that the set is $\G$-equivariant in the sense that $g\Oc_{K, R, v}(\a, \b)=\Oc_{K, R, v}(g\a, g\b)$ for every group element $g$ in $\G$.
The number of bounded geometry elements has linear growth in the following sense:

\begin{lemma}[Proposition 6.2 in \cite{MT}]
There exists $K_0$ such that for every $K \ge K_0$, there exists $R_0$ such that for every $R \ge R_0$, there exists a constant $C$ such that 
for every $\a, \b$ in $\partial X$, every $r>0$ and every group element $v$ in $\G$, we have
$$
\sharp\{g \in \G \ : \ go \in \Oc_{K, R, v}(\a, \b)o \cap B(o, r)\} \le Cr.
$$
\end{lemma}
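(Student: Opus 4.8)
The plan is to reduce the counting to the acylindricity hypothesis by projecting the relevant orbit points onto a single geodesic joining $\b$ to $\a$ and then cutting this projection into windows of a fixed length. The starting observation is that, since $g$ acts by isometries, $d(go, gvo)=d(o, vo)$ is a constant independent of $g$; writing $\ell:=d(o,vo)$, condition (i) in the definition of bounded geometry is simply the requirement $\ell \ge R$ (and the set is empty otherwise). Next I would record the geometric meaning of condition (ii): $\a \in S_{gvo}(go, K)$ forces $go$ to lie within a distance $D=D(K,\d)$ of the geodesic ray from $gvo$ towards $\a$, and symmetrically $\b \in S_{go}(gvo, K)$ places $gvo$ near the ray from $go$ towards $\b$. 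Fixing a (quasi-)geodesic $\g$ from $\b$ to $\a$, by $\d$-hyperbolicity (and stability of quasi-geodesics in the non-proper setting) both $go$ and $gvo$ then lie within $D$ of $\g$, appearing in the order $\b, gvo, go, \a$ along $\g$, with projections separated by $\ell$ up to a bounded error $c_0=c_0(K,\d)$.

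Now I would parametrise. For each admissible $g$ let $t(g)\in\R$ be the parameter of the nearest-point projection of $go$ onto $\g$. Since $d(o,go)\le r$ and $d(go,\g)\le D$, every projection point lies in $B(o, D+r)$, hence in a sub-segment of $\g$ of length at most $2(D+r)$; thus the values $t(g)$ fall into $O(r)$ windows of a fixed length. The key claim is that each window contains at most $N$ elements, where $N$ comes from acylindricity. Indeed, if $t(g)$ and $t(g')$ lie in a common window, then $go,g'o$ are within a bounded distance $K_1=K_1(K,\d)$ of each other, and because $gvo,g'vo$ project to $t(g)-\ell$ and $t(g')-\ell$ up to the error $c_0$, the points $gvo,g'vo$ are likewise within a bounded distance $K_1'=K_1'(K,\d)$. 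Setting $h:=g^{-1}g'$ and using that the $\G$-action is by isometries, this reads
\[
d(o, ho)=d(go,g'o)\le K_1, \qquad d(vo, h\,vo)=d(gvo, g'vo)\le K_1',
\]
so $h$ moves both $o$ and $vo$ by at most $K_{\mathrm{acyl}}:=\max\{K_1,K_1'\}$.

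Here the acylindricity enters: applied with the constant $K_{\mathrm{acyl}}$ it furnishes $R_{\mathrm{acyl}}$ and $N$ such that any two points at distance $\ge R_{\mathrm{acyl}}$ are moved by at most $K_{\mathrm{acyl}}$ by at most $N$ group elements. Since $d(o,vo)=\ell\ge R$, choosing $R_0\ge R_{\mathrm{acyl}}$ guarantees $o$ and $vo$ are that far apart, so fixing one representative $g_0$ in a window and letting $g$ vary shows $h=g_0^{-1}g$ ranges over at most $N$ elements; hence each window carries at most $N$ elements (this simultaneously absorbs any point-stabiliser multiplicity, as we are counting group elements rather than orbit points). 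Summing over the $O(r)$ windows gives the bound, with $C$ depending on $K$ and $R$ but not on $\a,\b,v$ or $r$; for small $r$ the same acylindrical estimate gives a uniform constant bound, absorbed into $C$ after reading $Cr$ as $C(1+r)$. The order of quantifiers matches the statement: $K_0$ is chosen so the shadow and quasi-geodesic constants are meaningful, $R_0$ is then chosen large enough both to realise the near-geodesic picture and to satisfy $R_0\ge R_{\mathrm{acyl}}(K_{\mathrm{acyl}}(K))$, and finally $C$ is read off from $N$ and the window count.

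The main obstacle, I expect, is the careful bookkeeping in the first paragraph --- turning the two shadow conditions into the assertion that $go$ and $gvo$ lie uniformly close to a single bi-infinite (quasi-)geodesic $\g$ with the correct order and with projection gap $\ell\pm c_0$ --- precisely in the non-proper case, where one must replace genuine geodesics between boundary points by $(1,C_\d)$-quasi-geodesics and invoke their stability. Once this alignment is established with constants depending only on $K$ and $\d$, the constancy $d(go,gvo)=\ell$ does all the work of producing a uniformly separated pair $(o,vo)$ on which acylindricity can be applied, and the linear count follows mechanically.
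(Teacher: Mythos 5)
The paper does not prove this lemma: it is quoted verbatim from Maher--Tiozzo \cite[Proposition 6.2]{MT}, with only the subsequent Remark noting that the argument there bounds the number of group elements and not merely of orbit points. Your reconstruction is essentially the Maher--Tiozzo argument --- align $go$ and $gvo$ along a $(1,C_\d)$-quasi-geodesic from $\b$ to $\a$ using the two shadow conditions (equivalently, $(\a|go)_{gvo}\ge \ell-K$ and $(\b|gvo)_{go}\ge \ell-K$ force $(\a|\b)_{go}$ and $(\a|\b)_{gvo}$ to be bounded once $R>2K+O(\d)$), cut the projection into $O(1+r)$ unit windows, and apply acylindricity to the pair $(o,vo)$ with $d(o,vo)\ge R\ge R_{\mathrm{acyl}}$ --- and the quantifier bookkeeping ($R_0$ depending on $K$, $C$ on $K$ and $R$ but not on $\a,\b,v,r$) is handled correctly. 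Two small caveats. First, as you note, the bound must be read as $C(1+r)$ (or with $r\ge 1$), since for tiny $r$ the set can be non-empty. Second, your acylindricity step genuinely requires that both elements in a window be bounded-geometry elements, so that $gvo$ as well as $go$ is pinned near the quasi-geodesic; the parenthetical about absorbing point-stabiliser multiplicity is therefore only valid when the set is read as $\{g\in\Oc_{K,R,v}(\a,\b) : go\in B(o,r)\}$ rather than as all $g\in\G$ with $go\in\Oc_{K,R,v}(\a,\b)o$ --- the latter reading can be infinite when $o$ has infinite stabiliser (e.g.\ the curve-complex action of a mapping class group), so the former is the intended one, and it is exactly the content of the paper's Remark.
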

\begin{remark}
In \cite{MT}, they estimate the number of orbit points $go$ in the set $\Oc_{K, R, v}(\a, \b)o \cap B(o, r)$; but the proof actually yields the same bound for the number of group elements.
\end{remark}

Let us consider bilateral random walks as in the proof of Theorem \ref{ergodic}. 
Recall that $(\overline\O, \overline\Pr, \{w_n\}_{n \in \Z}, T)$ is the probability space of bilateral paths with the shift $T$ defined by
$T(\{w_n\}_{n \in \Z})=\{w_1^{-1}w_{n+1}\}_{n \in \Z}$. 
We denote by $\x_+$ (resp. $\x_-$) the limit point of forward (resp. backward) random walk, and the distribution by $\n_\m$ (resp. $\n_{\check \m}$).
By an appropriate choice of $K$, $R$ and $v$, the set $\Oc_{K, R, v}(\a, \b)$ is non-empty and has linear growth for $\n_\m \otimes \n_{\check \m}$-almost every $(\a, \b)$.

\begin{proposition}[Proposition 6.4 in \cite{MT}]\label{MTO}
There exist constants $K$, $R$ and a group element $v$ in $\G$ such that 
the set $\Oc_{K, R, v}(\a, \b)$
of bounded geometry elements is non-empty (in fact, infinite) and has linear growth for $\n_\m \otimes \n_{\check \m}$-almost every $(\a, \b)$.
\end{proposition}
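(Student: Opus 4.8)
The plan is to realize the bounded geometry elements as orbit points $w_no$ of the \emph{bilateral} random walk and to count them by the ergodic theorem. Fix a group element $v$ with $d(o,vo)\ge R$, so that condition (i) in the definition of bounded geometry holds identically for every $g$, being an isometry-invariant statement about $v$ alone. Work in the measure-preserving ergodic system $(\overline\O,\overline\Pr,T)$ from the proof of Theorem \ref{ergodic}, with $\overline\pi(\overline\o)=(\x_+,\x_-)$ and $\overline\pi_*\overline\Pr=\n_\m\otimes\n_{\check\m}$, where the two limits are independent. Define the indicator
$$
\phi(\overline\o):=\mathbf{1}\!\left[\x_+\in S_{vo}(o,K)\right]\cdot\mathbf{1}\!\left[\x_-\in S_o(vo,K)\right]=\mathbf{1}\!\left[id\in\Oc_{K,R,v}(\x_+,\x_-)\right],
$$
the last equality using that (i) is automatic. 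Since $T^n(\{w_m\}_m)=\{w_n^{-1}w_{m+n}\}_m$, the forward and backward limits of $T^n\overline\o$ are $w_n^{-1}\x_+$ and $w_n^{-1}\x_-$; combining this with the $\G$-equivariance $\Oc_{K,R,v}(g\a,g\b)=g\,\Oc_{K,R,v}(\a,\b)$ gives $\phi(T^n\overline\o)=\mathbf{1}[w_n\in\Oc_{K,R,v}(\x_+,\x_-)]$. Birkhoff's ergodic theorem then yields, for $\overline\Pr$-almost every $\overline\o$,
$$
\frac1N\sum_{n=0}^{N-1}\phi(T^n\overline\o)\xrightarrow[N\to\infty]{}p:=\overline\Pr(\phi=1),
$$
so the number of indices $n<N$ with $w_n\in\Oc_{K,R,v}(\x_+,\x_-)$ is asymptotic to $pN$.

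By independence of the forward and backward walks, $p$ factorizes as $\n_\m(S_{vo}(o,K))\cdot\n_{\check\m}(S_o(vo,K))$, and the crux is to choose $v$ (with $d(o,vo)\ge R$) and $K$ large so that both shadows carry positive harmonic measure. This is where non-elementarity of $\sgr(\m)$ enters: $\n_\m$ and $\n_{\check\m}$ are non-atomic with full support on the limit set, and a shadow $S_x(y,K)$ captures a boundary neighborhood of the direction opposite to $[y,x]$ as $K\to\infty$. I would pick $v$ so that $o$ and $vo$ lie, up to bounded error, on a bi-infinite geodesic joining a point of $\mathrm{supp}\,\n_{\check\m}$ to a point of $\mathrm{supp}\,\n_\m$ in that order; concretely, taking $v$ to be a high power of a loxodromic element whose axis has endpoints in these supports forces both shadows to contain positive-measure boundary neighborhoods once $K$ exceeds a constant depending only on $\d$ and the tracking constants. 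Establishing $p>0$ rigorously is the \emph{main obstacle}, and it is exactly what dictates the specific choices of $K$, $R$ and $v$ in the statement.

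Granting $p>0$, infinitely many indices are good, so $\Oc_{K,R,v}(\x_+,\x_-)$ is infinite. To obtain linear growth in $r$ I would invoke the positive drift: by Theorem \ref{MTgeodesic}, $d(o,w_no)/n\to l_\m>0$ almost surely, so for any $\e>0$ every good index $n\le r/(l_\m(1+\e))$ yields an element with $w_no\in B(o,r)$ for all large $r$; transience (again from $l_\m>0$) ensures each group element is hit only finitely often, so the good indices furnish at least $cr$ distinct elements of $\Oc_{K,R,v}(\x_+,\x_-)$ with orbit in $B(o,r)$. This is the lower bound, and the matching upper bound is the preceding lemma (Proposition 6.2 in \cite{MT}). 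Finally, the count $\sharp\{g\in\G:go\in\Oc_{K,R,v}(\a,\b)o\cap B(o,r)\}$ is a function of $(\a,\b)=\overline\pi(\overline\o)$ alone; since the two-sided estimate holds for $\overline\Pr$-almost every $\overline\o$ and $\overline\pi_*\overline\Pr=\n_\m\otimes\n_{\check\m}$, it descends to $\n_\m\otimes\n_{\check\m}$-almost every $(\a,\b)$, which is the assertion.
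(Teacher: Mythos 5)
First, a point of comparison that matters: the paper does not prove this proposition at all --- it is quoted from Maher--Tiozzo \cite{MT} (their Proposition 6.4) and used as a black box, so there is no internal proof to measure you against. That said, your reduction --- realizing bounded geometry elements as values $w_n$ of the bilateral walk via $\phi(T^n\overline\o)=\mathbf{1}\left[w_n\in\Oc_{K,R,v}(\x_+,\x_-)\right]$ and applying the ergodic theorem --- is precisely the mechanism the paper itself deploys later, in the proof of Theorem \ref{acyl}, to convert the single fact $\overline\Pr(id\in\Oc(\o))=p>0$ into positive visiting frequency. So the ergodic shell of your argument is sound and consistent with how the result is used here.

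The genuine gap is the one you name yourself: the positivity $p=\n_\m(S_{vo}(o,K))\cdot\n_{\check\m}(S_o(vo,K))>0$ for some admissible triple $(K,R,v)$. This cannot be ``granted'' --- it is the entire content of the proposition, since the linear upper bound is the preceding lemma and infiniteness follows formally once $p>0$. The difficulty is that $K$ and $R$ are not free: the preceding lemma forces $K\ge K_0$ and then $R\ge R_0(K)$, so $d(o,vo)\ge R$ is large compared to $K$ and both shadows are genuinely small subsets of $\partial X$; you must exhibit a far-away $v$ whose two opposite shadows \emph{simultaneously} carry positive $\n_\m$-, resp.\ $\n_{\check\m}$-measure, with $K$ uniform in $R$. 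Your loxodromic-axis heuristic is plausible but unproved as stated, and note that your alignment $\x_-,o,vo,\x_+$ is reversed relative to the definition in the paper: $\a=\x_+\in S_{vo}(o,K)$ places $\x_+$ behind $o$ as seen from $vo$, so the configuration is $\x_+,o,vo,\x_-$, which changes which support each shadow must meet. Separately, your lower bound on the number of \emph{distinct} elements is incomplete: ``each group element is hit only finitely often'' gives no uniform bound on multiplicities, so $pN$ good times could a priori produce only $o(N)$ distinct elements of $\Oc_{K,R,v}(\x_+,\x_-)$; closing this requires a quantitative non-return input (e.g.\ linear progress with exponential decay, or a range theorem), not just transience. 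For the application in this paper only the upper bound $\le Cr$ and the positive frequency of visits are actually used, so these are the two points you would need to supply.
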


Let us denote the above set of bounded geometry elements by $\Oc(\o)$ for a (bilateral) sample $\o$.
The following is a version of theorem by Kaimanovich proved in the case of trees \cite[Theorem 1.5.3]{K98}.

\begin{proposition}\label{pro-non-proper}
Assume the same setting as in Theorem \ref{acyl}.
If almost every trajectory $\{w_n o\}_{n=0}^\infty$ visits $\Oc(\o)o$ infinitely many times $\{\t_n\}_{i=0}^\infty$ such that $\t_{n+1}/\t_n \to 1$ as $n \to \infty$, then
the harmonic measure $\n_\m$ is exact dimensional and 
$$
\dim \n_\m=\frac{h_\m}{l_\m}.
$$
\end{proposition}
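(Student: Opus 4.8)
The plan is to reproduce the architecture of the proper case (Theorems \ref{F} and \ref{exact}), replacing the two ingredients that are unavailable in a non-proper space—genuine geodesic rays and exponential growth—by their acylindrical substitutes: the quasi-geodesic tracking of Theorem \ref{MTgeodesic} and the \emph{linear} growth of bounded-geometry elements. First I would establish the non-proper analogue of Theorem \ref{F}: for each $\e>0$ a Borel set $F_\e \subset \partial X$ with $\n(F_\e) \ge 1-2\e$ and
$$
\liminf_{r \to 0}\frac{\log \n\left(F_\e \cap B(\x, r)\right)}{\log r}\ge \frac{h}{l} - C\e
$$
for $\n$-almost every $\x$. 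The matching upper bound $\limsup_{r\to 0}\log\n(B(\x,r))/\log r \le h/l$ is already supplied by Theorem \ref{upperbound}, which was stated precisely so as to cover non-proper, non-geodesic $X$; here $l_\m$ coincides with the drift since $\m$ has finite first moment and the drift is positive by Theorem \ref{MTgeodesic}.

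For $F_\e$ I would define the event $A_\e=A_{\e,N_\e}$ as in the proper case but track a $(1,C_\d)$-quasi-geodesic ray $\g_\o$ (Theorem \ref{MTgeodesic}) in place of a genuine geodesic, impose the Shannon bound $\m^{\ast n}(w_n) \le e^{-n(h-\e)}$ (Theorem \ref{Shannon}) and the drift control $\left||w_n o| - ln\right| \le \e n$ for all $n \ge N$, and in addition restrict to samples whose visiting times $\{\t_k\}$ into $\Oc(\o)o$ satisfy $\t_{k+1}/\t_k \le 1+\e$ for large $k$—possible for all large enough $N_\e$ by the hypothesis $\t_{k+1}/\t_k \to 1$. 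As in \eqref{defF} I set $F_\e=\{\x:\Pr_\x(A_\e)\ge\e\}$ via Rohlin disintegration, and the same averaging yields $\n(F_\e)\ge 1-2\e$. The key estimate then bounds $\n\big(F_\e \cap S(w_{\t_k}o, R)\big)$ at shadows centered at the bounded-geometry orbit points $x=w_{\t_k}o$: by Lemma \ref{hit} in its quasi-geodesic form (Remark \ref{remarkhit}) the trajectory hits $B(x,2\e\t_k+C_0)$ at time $\t_k$, and summing $\Pr(A_\e \cap \{w_{\t_k}=g\}) \le e^{-\t_k(h-\e)}$ over the relevant $g$ reduces matters to counting bounded-geometry elements $g$ with $go \in B(o,(l+2\e)\t_k+C_0)$. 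This is the crucial replacement: instead of exponential growth, the linear-growth lemma (Proposition 6.2 of \cite{MT}) bounds their number by $C\t_k=o(e^{\e\t_k})$, so that
$$
\n\big(F_\e \cap S(w_{\t_k}o, R)\big) \le \tfrac{1}{\e}\, C\t_k\, e^{-\t_k(h-\e)} \le e^{-\t_k(h-2\e)}
$$
for all large $k$. Converting shadows into balls by Lemma \ref{shadow} (radius $\sim e^{-|x|+R}\sim e^{-l\t_k}$) gives the liminf lower bound along the discrete radii $r_k \sim e^{-l\t_k}$, and the hypothesis $\t_{k+1}/\t_k \to 1$—equivalently $\log r_k/\log r_{k+1}\to 1$—lets monotonicity of $r\mapsto \n(F_\e\cap B(\x,r))$ interpolate the discrete estimate into a genuine liminf as $r\to 0$.

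With this non-proper version of Theorem \ref{F} in hand, the conclusion follows exactly as in the proof of Theorem \ref{exact}. By Lemma \ref{Frostman} we get $\dim \n|_{F_\e}\ge h/l-C\e$, so the essential-supremum characterization \eqref{var} forces $G_\e=\{\x:\liminf_r \log\n(B(\x,r))/\log r \ge h/l-C'\e\}$ to have positive $\n$-measure. Since $\sgr(\m)$ is a group, stationarity gives $c_{g,\m}^{-1}\n \le g^{-1}\n \le c_{g,\m}\n$, and together with Lemma \ref{ball} this makes $G_\e$ a $\G$-invariant set, hence $\n(G_\e)=1$ by ergodicity (Theorem \ref{ergodic}). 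Letting $\e\to 0$ and combining with Theorem \ref{upperbound} yields the exact local dimension $h/l$ at $\n$-almost every point.

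I expect the main obstacle to be the counting step, since the bounded-geometry set $\Oc_{K,R,v}(\x_+,\x_-)$ depends on \emph{both} the forward limit $\x_+$ (which parametrizes $F_\e$) and the backward limit $\x_-$, whereas the linear-growth bound is only guaranteed for $\n_\m\otimes\n_{\check\m}$-almost every pair (Proposition \ref{MTO}); reconciling the conditioning on $\x_+$ with this bound requires passing through the bilateral walk of Theorem \ref{ergodic} and controlling the $\x_-$-dependence. A further technical point is that the visiting times $\t_k$ are random rather than deterministic, so the bookkeeping in $\Pr(A_\e\cap\{w_{\t_k}=g\})$ must be organized scale by scale using the drift relation $\t_k \approx |w_{\t_k}o|/l$; and, unlike the proper case where every $\x_\o(ln)$ serves as a center, here only the sparse bounded-geometry times are available, so the interpolation between consecutive scales genuinely relies on $\t_{k+1}/\t_k \to 1$.
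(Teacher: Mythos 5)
Your proposal follows the same architecture as the paper's proof: the event $A_\e$ combining quasi-geodesic tracking (Theorem \ref{MTgeodesic}), the Shannon bound (Theorem \ref{Shannon}) and the visiting-time condition $\t_{k+1}\le(1+\e)\t_k$; the set $F_\e$ defined by Rohlin disintegration exactly as in (\ref{defF}); linear growth of bounded-geometry elements (Proposition \ref{MTO}) replacing exponential growth in the counting step; shadows converted to balls via Lemma \ref{shadow} and Remark \ref{remarkhit}; and the stationarity-plus-ergodicity upgrade of Theorem \ref{exact} at the end, legitimate here because $\sgr(\m)$ is assumed to be a group. The one substantive difference is where you center the shadows. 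You take the sparse random centers $x=w_{\t_k}(\o)o$ and then invoke $\t_{k+1}/\t_k\to1$ a second time to interpolate between the scales $e^{-l\t_k}$; the paper instead centers shadows at the deterministic points $x_n=\x(ln)$ of the tracking quasi-geodesic for \emph{every} $n$, so that the scales $e^{-ln}$ interpolate automatically, and uses the visiting-time hypothesis only inside the counting: on $A_\e\cap\{w_no\in B(x_n,2\e n+C_0)\}$ there is some visiting time $\t_i\in[n,(1+\e)n]$ of the sample being integrated at which the walk sits at one of the at most $C'(l+\e)n$ bounded-geometry orbit points in a ball of radius linear in $n$, each carrying probability at most $e^{-n(h-\e)}$ since $\t_i\ge n$. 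This choice also sidesteps the ambiguity in your sum $\sum_g\Pr\bigl(A_\e\cap\{w_{\t_k}=g\}\bigr)$: if $\t_k$ is the visiting time of the sample defining the center then $w_{\t_k}(\o')$ need not be a bounded-geometry point of the sample $\o'$ being integrated, while if it is the visiting time of $\o'$ then the time is random; decomposing at the deterministic time $n$ and only then passing to the nearby visiting time of each $\o'$ is precisely the bookkeeping you anticipate in your closing paragraph. Your remaining concern---that $\Oc(\o)$ depends on the backward limit $\x_-$ as well, so the set being counted varies with the sample even though the cardinality bound of Proposition \ref{MTO} is uniform---is genuine, but the paper's own proof treats it at the same level of brevity, following Kaimanovich's scheme for trees \cite[Theorem 1.5.3]{K98}; so this is not a defect of your proposal relative to the paper.
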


\begin{proof}
By the geodesic tracking property Theorem \ref{MTgeodesic} and the discussion above (2),
for almost every sample $\o$, there exists a $(1, C_\d)$-quasi-geodesic ray $\x_\o$ such that
$(1/n)d(w_n(\o) o, \x_\o) \to 0$ as $n \to \infty$.
We may assume that the $(1, C_\d)$-quasi-geodesic ray $\x_\o$ starts from $o$ by the quasi-geodesic stability (6.9.\ in \cite{V}).
Let us define the following event:
$$
A_{\e, N} 
:=\left\{ \o \in \O \ : \ 
\begin{aligned}
&\text{$d(w_n(\o) o, \x_\o(ln)) \le \e n$}, \\
&\text{$\m^{\ast n}\left(w_n(\o)\right) \le e^{-n(h-\e)}$ and $\t_{n+1} \le (1+\e)\t_n$ for all $n \ge N$}
\end{aligned}
\right\}.
$$
By Theorem \ref{MTgeodesic}, Theorem \ref{Shannon} and the assumption,
for every $\e>0$,
there exists $N_\e$ such that for all $N \ge N_\e$ we have $\Pr(A_{\e, N}) \ge 1-\e$.
Let $A_\e:=A_{\e, N_\e}$.
For $\n_\m$-almost every point $\x$ in $\partial X$, we denote a $(1, C_\d)$-quasi-geodesic ray starting from $o$ towards $\x$ by the same symbol $\x$.
Let $x_n:=\x(ln)$.
Given such a sequence $\{x_n\}_{n=0}^\infty$,
we will estimate the probability $\Pr(A_\e \cap\{w_n o \in B(x_n, 2\e n+C_0)\})$, where $C_0:=C_{R, \d}$.
For $\o \in A_\e$, for all $n \ge N_\e$, there is a time $\t_i \in [n, (1+\e)n]$ when $w_{\t_i}(\o)o$ visits $\Oc(\o)o$.
Since $d(o, w_n(\o) o) \le (l+\e)n+C_\d$ for $\o \in A_\e$, by Proposition \ref{MTO} which claims that
$$
\sharp\{g \in \G \ : \ go \in \Oc(\o)o \cap B(o, r)\} \le Cr,
$$
the number of possibilities of such $w_{\t_i}$ is at most $C'(l+\e)n$.
Therefore for all $n \ge N_0$, we have
$$
\Pr(A_\e \cap\{w_n o \in B(x_n, 2\e n+C_0)\}) \le C'(l+\e)ne^{-n(h-\e)}.
$$
Define the set $F_\e$ in $\partial X$ as in (\ref{defF}).
By Lemma \ref{hit} and the following Remark \ref{remarkhit}, as in the proof of Theorem \ref{F},
we have
$$
\e \n\left(F_\e \cap S(x_n,R)\right)\le \Pr\left(A_\e \cap \{w_n o \in B(x_n, 2\e n+C_0)\}\right),
$$
and thus
\begin{align*}
\liminf_{n \to \infty}\frac{\log \n\left(F_\e \cap S(x_n, R)\right)}{-n} \ge h-\e.
\end{align*}
By Lemma \ref{shadow}, we obtain
\begin{align*}
\liminf_{r \to 0}\frac{\log \n\left(F_\e \cap B(\x, r)\right)}{\log r} \ge \frac{h-\e}{l+\e}.
\end{align*}
This holds for $\n_\m$-almost every $\x$ in $\partial X$.
The rest follows as in the proof of Theorem \ref{exact}.
\end{proof}

\begin{proof}[Proof of Theorem \ref{acyl}]
We note that the probability that the identity element $id$ is in $\Oc(\o)$ is positive;
$\overline\Pr(\{\o \in \overline\O \ : \ id \in \Oc(\o)\})=p>0$ by \cite[The proof of Proposition 6.4]{MT}.
The event $\{\o \in \overline\O \ : \ w_n \in \Oc(\x_+, \x_-)\}$ coincides with $\{\o \in \overline\O \ : \ id \in \Oc(w_n^{-1}\x_+, w_n^{-1}\x_-)\}$ since $\Oc(\x_+, \x_-)$ is $\G$-equivariant, and also with $\{\o \in \overline\O \ : \ id \in \Oc(T^n\o)\}$ by definition of the shift $T$.
Since $(\overline \O, \overline \Pr, T)$ is a probability measure preserving system, we have
$$
\overline\Pr(\{\o \in \overline\O \ : \ id \in \Oc(T^n\o)\})=p>0.
$$
Hence by ergodicity of the system, almost every sample $\{w_n\}_{n=0}^\infty$ visits $\Oc(\o)$ infinitely often, and the $k$-th visiting time $\t_k$ satisfies that $k/\t_{k} \to p$ almost surely.
Therefore $\t_{k+1}/\t_k \to 1$ as $k \to \infty$, and by Proposition \ref{pro-non-proper}, the theorem follows.
\end{proof}

\section{Questions}\label{questions}

We collect some geometric measure theoretic questions about the harmonic measure on the boundary of a hyperbolic space.
We state questions for word hyperbolic groups although it can be stated in a more general setting.
Let $\G$ be a non-elementary hyperbolic group and $\partial \G$ be the boundary of a Cayley graph.

\begin{question}\label{q1}
Let $\m$ be a probability measure on $\G$ of finite first moment, and the semigroup $\sgr(\m)$ generated by the support of $\m$ be non-elementary.
Then is it true that 
the harmonic measure $\n_\m$ has the following property?: We have
$$
\dim \n_\m=\dim \partial \G
$$
if and only if $\n_\m$ and the Hausdorff measure of the right dimension $\dim \partial \G$ are mutually absolutely continuous.
\end{question}

This is true for $\m$ of finite support, more generally, of superexponential moment according to a result by Gou\"ezel (\cite{GMM}; see also \cite{BHM11} and \cite{T}).
We can also ask this question for the harmonic measure for Brownian motion on a regular covering of finite volume manifold as in Theorem \ref{Bm}.

An answer to the following question would be a step towards the above one:

\begin{question}
In the same setting as in Question \ref{q1}, when is the harmonic measure $\n_\m$ {\it doubling}?
Recall that we call the measure $\n$ on a metric space $Z$ {\it doubling} if there exists a constant $C>0$ such that for every $r \ge 0$ and every $z$ in $Z$ we have
$\n \left(B(z, 2r)\right) \le C \n \left(B(z, r)\right)$.
\end{question}

We still do not know if the harmonic measure $\n_\m$ is doubling for every $\m$ of finite exponential moment.

In Theorem \ref{exact}, \ref{exact-semigroup} and \ref{acyl}, the measure space $(\partial X, \n_\m)$ is actually the Poisson boundary for $(\G, \m)$, but the fact is not used to show the dimension formula: $\dim \n_\m=h_\m/l_\m$.
If the space $(\partial X, \n_\m)$ is not the Poisson boundary, then in the dimension formula, the entropy $h_\m$ would be replaced by the {\it differential $\m$-entropy} (see \cite{F} and discussion in the introduction in \cite{KLP}).

\begin{question}
Let $X$ be a hyperbolic proper geodesic metric space.
If $\G$ does not have exponential growth relative to $X$ (say, a countable dense subgroup of isometries), then the harmonic measure associated to a probability measure $\m$ on $\G$ is exact dimensional? If it is so, then what is the Hausdorff dimension?
\end{question}

Recently, Hochman and Solomyak have announced the dimension formula for a finitely generated dense subgroup of $SL(2, \R)$ \cite{Hochman-Solomyak}.

\textbf{Acknowledgements.}
The author would like to thank Pierre Mathieu for a discussion from which this work arises, owes subsequent discussions to him, Vadim A.\ Kaimanovich for helpful (and also stimulus) discussions and useful suggestions, 
Fran\c{c}ois Ledrappier for comments on historical background,
Takefumi Kondo for useful discussions,
S\'ebastien Alvarez, Behrang Forghani, and Yuval Peres for helpful feedback, and anonymous referees for reading the manuscript carefully and for helpful comments.
The author is supported by JSPS Grant-in-Aid for Young Scientists (B) 26800029.

\bibliographystyle{alpha}
\bibliography{dim}

\end{document}